\newtheorem{theorem}{Theorem}[section]
\newtheorem{lemma}[theorem]{Lemma}
\newtheorem{proposition}[theorem]{Proposition}
\newtheorem{corollary}[theorem]{Corollary}
\newtheorem{definition}[theorem]{Definition}
\newtheorem{remark}[theorem]{Remark}
\numberwithin{equation}{section}
\def\shh{{\mathfrak h}}
   \def\sH{{\mathfrak H}}   
   \def\sK{{\mathfrak K}}   \def\sL{{\mathfrak L}}
\def\sM{{\mathfrak M}}   \def\sN{{\mathfrak N}}   
\def\sS{{\mathfrak S}}
      \def\dC{{\mathbb C}}
   \def\dN{{\mathbb N}}   
      \def\dR{{\mathbb R}}
\def\cA{{\mathcal A}}   \def\cB{{\mathcal B}}   \def\cC{{\mathcal C}}
\def\cD{{\mathcal D}}      
   \def\cH{{\mathcal H}}   
   \def\cK{{\mathcal K}}   \def\cL{{\mathcal L}}
\def\cM{{\mathcal M}}   \def\cN{{\mathcal N}}   
\def\cP{{\mathcal P}}   \def\cQ{{\mathcal Q}}   \def\cR{{\mathcal R}}
\def\cS{{\mathcal S}}      
\def\cV{{\mathcal V}}      \def\cX{{\mathcal X}}
\def\cY{{\mathcal Y}}   \def\cZ{{\mathcal Z}}
      \def\bL{{\mathbf L}}
\def\half{{\frac{1}{2}}}
\def\dim{{\rm dim\,}}
\def\RE{{\rm Re\,}}
\def\IM{{\rm Im\,}}
\def\Ext{{\rm Ext\,}}
\def\clos{{\rm clos\,}}
\def\ran{{\rm ran\,}}
\def\cran{{\rm \overline{ran}\,}}
\def\dom{{\rm dom\,}}
\def\mul{{\rm mul\,}}
\def\cdom{{\rm \overline{dom}\,}}
\def\dim{{\rm dim\,}}
\def\graph{{\rm graph\,}}
\def\cspan{{\rm \overline{span}\, }}
\def\uphar{{\upharpoonright\,}}
\def\wt{\widetilde}
\def\f{\varphi}
\begin{document}

\title[Q-functions of nonnegative operators]
{ Q-functions and boundary triplets\\
 of nonnegative operators }

\author[Yury Arlinski\u{\i}]{Yu.M. Arlinski\u{\i}}
\address{Department of Mathematical Analysis  \\
East Ukrainian National University  \\
Kvartal Molodyozhny 20-A  \\
Lugansk 91034  \\
Ukraine} \email{yury.arlinskii@gmail.com}

\author[Seppo Hassi]{S. Hassi}
\address{Department of Mathematics and Statistics  \\
University of Vaasa  \\
P.O. Box 700  \\
65101 Vaasa  \\
Finland} \email{sha@uwasa.fi}

\dedicatory{Dedicated to Lev Aronovich Sakhnovich in the occasion of
his 80-th birthday}

\begin{abstract}Operator-valued $Q$-functions for special pairs of nonnegative
selfadjoint extensions of nonnegative not necessarily densely
defined operators are defined and their analytical properties are
studied. It is shown that the Kre\u\i n-Ovcharenko statement
announced in \cite{KrO2} is valid only for $Q$-functions of densely
defined symmetric operators with finite deficiency indices. A
general class of boundary triplets for a densely defined nonnegative
operator is constructed such that the corresponding Weyl functions
are of Kre\u\i n-Ovcharenko type.

\end{abstract}
\maketitle

\section{Introduction}
{\bf Notations.} We use the symbols $\dom T$, $\ran T$, $\ker T$ for
the domain, the range, and the null-subspace of a linear operator
$T$. The closures of $\dom T$, $\ran T$ are denoted by $\cdom T$,
$\cran T$, respectively. The identity operator in a Hilbert space
$\sH$ is denoted by  $I$ and sometimes by $I_\sH$. If $\sL$ is a
subspace, i.e., a closed linear subset of $\sH$, the orthogonal
projection in $\sH$ onto $\sL$ is denoted by $P_\sL.$ The notation
$T\uphar \cN$ means the restriction of a linear operator $T$ on the
set $\cN\subset\dom T$. The resolvent set of $T$ is denoted by
$\rho(T)$. The linear space of bounded operators acting between
Hilbert spaces $\sH$ and $\sK$ is denoted by $\bL(\sH,\sK)$ and the
Banach algebra $\bL(\sH,\sH)$ by $\bL(\sH).$ A linear operator $\cA$
in a Hilbert space is called nonnegative if $(\cA f,f)\ge 0$ for all
$f\in \dom \cA.$

\vskip 0.3cm Let $\sH$ be a separable complex Hilbert space and let
$S$ be a closed symmetric operator with equal deficiency indices in
$\sH$. We do not suppose that $S$ is densely defined. As it is well
known the Kre\u\i n's resolvent formula for canonical and
generalized resolvents plays crucial role in the spectral theory of
selfadjoint extensions and its numerous applications. The essential
part of this formula is the $Q$-function of $S$.
Denote by $\sN_z$ the defect subspace of $S$, i.e.,
\[
\sN_z=\sH\ominus \ran(S-\bar zI).
\]
or, equivalently, $\sN_z=\ker(S^*-zI).$ Choose a selfadjoint
extension $\wt S$ of $S$. The following definitions can be found in
M.~Kre\u\i n and H.~Langer papers \cite{KL1}, \cite{KL2}, \cite{KL}
for a densely defined $S$ and in the Langer and Textorius paper
\cite{LT} for the general case of a symmetric linear relation $S$.
\begin{definition}
\label{GFIELD} Let $\cH$ be a Hilbert space whose dimension is equal
to the deficiency number of $S$. The function
$$\rho(\wt S)\ni z\mapsto\Gamma( z)\in\bL(\cH,\sH)$$
is called the $\gamma$-field, corresponding to $\wt S$ if
\begin{enumerate}
\item the operator $\Gamma( z)$ isomorphically maps $\cH$ onto $\sN_ z$ for all $ z\in
\rho(\wt S)$,
\item  for every $ z,\,\zeta\in\rho(\wt S)$ the  identity
\begin{equation}
\label{gg} \Gamma( z)=\Gamma(\zeta)+( z-\zeta) (\wt S- z
I)^{-1}\Gamma(\zeta)
\end{equation}
holds.
\end{enumerate}
\end{definition}
\begin{definition}
\label{QFUNCT} Let $\Gamma( z)$ be a $\gamma$-field corresponding to
$\wt S$.  An operator-valued function $Q( z)\in\bL(\cH)$ with the
property
\[
Q( z)-Q^*(\zeta)=( z-\overline \zeta)\Gamma ^*(\zeta)\Gamma( z),
\quad z,\zeta\in\rho(\wt S)
\]
is called the $Q$-function 
of $S$ corresponding to the $\gamma$-field $\Gamma( z).$
\end{definition}
The $\gamma$-field corresponding to $\wt S$ can be constructed as
follows: fix $\zeta_0\in\rho(\wt S)$ and let
$\Gamma_{\zeta_0}\in\bL(\cH,\sH)$ be a bijection from $\cH$ onto
$\sN_{\zeta_0}$. Then clearly the function
\[
\Gamma( z)=(\wt S-\zeta_0 I)(\wt S- z I )^{-1}\Gamma_{\zeta_0}=
\Gamma_{\zeta_0}+( z-\zeta_0)(\wt S- z I)^{-1}\Gamma_{\zeta_0} ,\;
 z\in\rho(\wt S)
\]
is a $\gamma$-field corresponding to $\wt S$. It follows from
Definition \ref{QFUNCT} that
\[
Q( z)=C-i\IM \zeta_0\Gamma^*_{\zeta_0}\Gamma_{\zeta_0} +( z-\bar
\zeta_0)\Gamma^*_{\zeta_0}\Gamma_ z,
\]
where $C=\RE Q(\zeta_0)\in \bL(\cH)$ is a selfadjoint operator.
Thus, the $Q$-function is defined uniquely up to a bounded
selfadjoint term in $\cH$ and it is a Herglotz-Nevanlinna function.
Moreover, for every $z$, $\IM z\ne 0$,  $-i\,\IM
 z\,\left(Q( z)-Q^*( z)\right)$ is positive definite.
Hence, $-Q^{-1}( z)$, $\IM z\ne 0$, is a Herglotz-Nevanlinna
function, too. Definition \ref{QFUNCT} combined with \eqref{gg}
gives the following representation for $Q$:
\[
Q( z)=C-i\IM \zeta_0\Gamma^*_{\zeta_0}\Gamma_{\zeta_0} +( z-\bar
\zeta_0)\Gamma^*_{\zeta_0}\left(\Gamma_{\zeta_0}+( z-\zeta_0)(\wt S-
z I)^{-1}\Gamma_{\zeta_0}\right).
\]
One of the main results of the Kre\u\i n--Langer -Textorius theory
of $Q$-functions is the following statement: \textit{If
$Q$-functions of two simple closed densely defined symmetric
operators $S_1$ and $S_2$ coincide, then the operators $S_1$ and
$S_2$ are unitarily equivalent}.

This result remains valid if condition (1) in Definition
\ref{GFIELD} is replaced with a little bit weaker one: $\Gamma(z)$
is one-to-one and has dense range in $\sN_z$ at least for one (and
then for all) $z$ \cite{HSW}.

M.~Kre\u\i n and I.~Ovcharenko in their papers \cite{KrO1} and
\cite{KrO2} defined special $Q$-functions for a densely defined
closed nonnegative operator $S$ in the Hilbert space $\sH$ with
disjoint Friedrichs and Kre\u\i n extensions $S_{\rm{F}}$ and
$S_{\rm{K}}$ \cite{Kr} ($\dom S_{\rm{F}}\cap\dom S_{\rm{K}}=\dom
S)$. Let $\cH$ be a Hilbert space with $\dim \cH$ equal to the
deficiency number of $S$. Let $a\ge 0$ and let
$$ C_a:=2a\left((S_{\rm{K}}+a I)^{-1}-(S_{\rm{F}}+a I)^{-1}\right),\;
C:=C_{1}=B_M-B_\mu,
$$
where $B_M=(I-S_{\rm{K}})(I+S_{\rm{K}})^{-1}$,
$B_\mu=(I-S_{\rm{F}})(I+S_{\rm{F}})^{-1}.$ Define the
operator-valued functions $\gamma_{\rm{F}}(\lambda)$ and
$\gamma_{\rm{K}}(\lambda)$
 $$\dC\setminus\dR_+\ni\lambda\mapsto\bL(\cH,\sH),$$
 as follows
\begin{enumerate}
\item $\cran\gamma_{\rm{F}}(\lambda)=\cran \gamma_{\rm{K}}(\lambda)=\sN_\lambda$ for each $\lambda\in
\dC\setminus\dR_+$, where $\sN_\lambda:=\ker (S^*-\lambda I),$
\item $\gamma_{\rm{F}}(\lambda)-\gamma_{\rm{F}}(z)=(\lambda-z)(S_{\rm{F}}-\lambda
I)^{-1}\gamma_{\rm{F}}(z)$,
$\gamma_{\rm{K}}(\lambda)-\gamma_{\rm{K}}(z)=(\lambda-z)(S_{\rm{K}}-\lambda
I)^{-1}\gamma_{\rm{K}}(z)$,

\item
$\ran\gamma_{\rm{F}}(-a)=\ran \gamma_{\rm{K}}(-a)=\ran C^{1/2}_a$
for each $a>0$.
\end{enumerate}

The $\bL(\cH)$-valued functions $Q_{\rm{F}}(\lambda)$ and
$Q_{\rm{K}}(\lambda)$ are defined as follows:

1)
$Q_{\rm{F}}(\lambda)-Q_{\rm{F}}^*(z)=(\lambda-z)\gamma^*_{\rm{F}}(z)\gamma_{\rm{F}}(\lambda)$,
$\lambda,z\in\dC\setminus\dR_+$,

 2) $s-\lim\limits_{x\uparrow
0}Q_{\rm{F}}(x)=0$,

3)
$Q_{\rm{K}}(\lambda)-Q_{\rm{K}}^*(z)=(\lambda-z)\gamma^*_{\rm{K}}(z)\gamma_{\rm{K}}(\lambda)$,
$\lambda,z\in\dC\setminus\dR_+$,

4)$s-\lim\limits_{x\downarrow-\infty}Q_{\rm{K}}(x)=0$.

For example, one can take $\cH=\sN:=\ker (S^*+I)$ and
\[
\begin{array}{l}
\gamma^{(0)}_{\rm{F}}(\lambda):=\left(I+(\lambda+1)(S_{\rm{F}}-\lambda
I)^{-1}\right)C^{1/2}\uphar\sN,\\
\gamma^{(0)}_{\rm{K}}(\lambda):=\left(I+(\lambda+1)(S_{\rm{K}}-\lambda
I)^{-1}\right)C^{1/2}\uphar\sN.
\end{array}
\]
Then
\[
\begin{array}{l}
Q_{\rm{F}}^{(0)}(\lambda)=-2I_{\sN}+(\lambda+1)C^{1/2}\left(I+(\lambda+1)(S_{\rm{F}}-\lambda
I)^{-1}\right)C^{1/2}\uphar\sN,\\
Q_{\rm{K}}^{(0)}(\lambda)=2I_{\sN}+(\lambda+1)\,C^{1/2}\left(I+(\lambda+1)(S_{\rm{K}}-\lambda
I)^{-1}\right)C^{1/2}\uphar\sN.
\end{array}
\]

The following statement is formulated without proof in \cite{KrO2}.
\textit{Let $Q$ be an $\bL(\cH)$-valued function holomorphic on
$\dC\setminus[0,\infty)$. Then $Q$ is the $Q_{\rm{K}}$-function
($Q_{\rm{F}}$-function) of a densely defined closed nonnegative
operator if and only if the following conditions hold true:}

\textit{1) $Q^{-1}(\lambda)\in\bL(\cH)$ for each
$\lambda\in\dC\setminus[0,\infty)$;\\
\indent 2) $\lim\limits_{x\uparrow 0}(Q(x)g,g)=\infty$ for each
$g\ne 0$ $\quad$ (2') $s-\lim\limits_{x\uparrow 0}Q(x)=0$);\\
\indent 3) $s-\lim\limits_{x\downarrow -\infty}Q(x)=0$ $\quad$
(3')$\lim\limits_{x\downarrow -\infty}(Q(x)g,g)=-\infty$ for each
$g\ne 0$);\\
\indent 4) $\lim\limits_{x\downarrow -\infty}(x Q(x)g,g)=-\infty$
for each $g\ne 0$ $\quad$ (4') $s-\lim\limits_{x\downarrow
-\infty}x^{-1}Q(x)=0$). }

In this paper it is shown that this statement holds true only for
the case $\dim\cH<\infty$. More precisely, given an arbitrary closed
\textit{not necessary densely defined} nonnegative symmetric
operator $S$ with infinite defect numbers and disjoint nonnegative
selfadjoint (operator) extensions (the case $\dom S=\{0\}$ is
possible), we construct special pairs $\{\wt S_0, \wt S_1\}$ of
disjoint ($\wt S_0\cap \wt S_1= S$) nonnegative selfadjoint
extensions different from the pair $\{S_{\rm{F}}, S_{\rm{K}}\}$ and
define the corresponding $Q$-functions $\wt Q_0$ and $\wt Q_1$ of
Kre\u\i n-Ovcharenko type, i.e., possessing properties mentioned in
the above statement. Furthermore, for the case of a densely defined
nonnegative operator $S$ we construct a new general class of
\textit{positive (generalized) boundary triplets}. This class of
boundary triplets extends the notions of (ordinary and generalized)
basic boundary triplets as well as the earlier notions of positive
boundary triplets appearing in \cite{Ar2,Ar5,AHZS,DM2,GG1,Koch}. A
key assumption used in the construction is the existence of a pair
$\{\wt S_0, \wt S_1\}$ of nonnegative selfadjoint extensions of $S$
which are disjoint, i.e. $\dom \wt S_1\cap \dom \wt S_0=\dom S$, and
whose associated closed forms satisfy the inclusion
\[
 \wt{S}_0[\cdot,\cdot]\subset\wt{S}_1[\cdot,\cdot].
\]
With some further condition of the pair $\{\wt S_0, \wt S_1\}$ this
class of boundary triplets is specialized to a class of boundary
triplets leading to realization results for the classes of
$Q$-functions of Kre\u\i n-Ovcharenko type as introduced above.

In this paper we proceed on the base of the dual situation related
to a non-densely defined Hermitian contraction $B$ and its
selfadjoint contractive extensions. Recall that so-called $Q_\mu$-
and $Q_M$-functions were introduced and studied in \cite{KrO}. These
functions are associated with the \textit{extremal extensions}
$B_\mu$ and $B_M$ of $B$ which are fundamental concepts going back
to \cite{Kr}. In \cite{AHS} the $Q$-functions of Kre\u\i
n-Ovcharenko type, formally similar to $Q_\mu$- and $Q_M$-functions,
were considered and therein analogous counterexamples to the
statements of Theorem 2.2 in \cite{KrO} were given.

In the last section of this paper boundary triplet technique plays a
central role; the basic notions and some fundamental results related
to the boundary triplets, their Weyl functions, boundary relations
and their Weyl families for the adjoint of a symmetric linear
relation can be found in \cite{DM1, DM2, DHMS, DHMS1, GG1, GGK}.

\section{Basic Preliminaries}

\subsection{Closed nonnegative forms and nonnegative selfadjoint
relations}

Let $\shh=\shh[\cdot,\cdot]$ be a nonnegative form in the Hilbert
space $\sH$ with domain $\dom \shh$. The notation $\shh[h]$ will be
used to denote $\shh[h,h]$, $h \in \dom \shh$. The form $\shh$ is
\textit{closed} if
\[
 h_n \to h, \quad \shh[h_n-h_m] \to 0, \quad h_n \in
\dom \shh, \quad h \in \sH, \quad m,n \to \infty,
\]
imply that $h \in \dom \shh$ and $\shh[h_n-h] \to 0$. The form $\shh
$ is \textit{closable} if
\[
 h_n \to 0, \quad \shh[h_n-h_m] \to 0, \quad h_n \in \dom \shh
\quad \Rightarrow \quad \shh[h_n] \to 0.\\
\]
The form $\shh $ is closable if and only if it has a closed
extension, and in this case the closure of the form is the smallest
closed extension of $\shh$. The inequality $\shh_1 \geq \shh_2$ for
semibounded forms $\shh_1$ and $\shh_2$ is defined by
\begin{equation}\label{ineq0}
 \dom \shh_1 \subset \dom \shh_2, \quad \shh_1[h] \geq
\shh_2[h], \quad h \in \dom \shh_1.
\end{equation}
In particular, $\shh_1 \subset \shh_2$ implies $\shh_1 \geq \shh_2$.
If the forms $\shh_1$ and $\shh_2$ are closable, the inequality
$\shh_1 \geq \shh_2$ is preserved by their closures.

There is a one-to-one correspondence between all closed nonnegative
forms $\shh$ and all nonnegative selfadjoint relations $H$ in $\sH$
via $\dom H \subset \dom \shh$ and
\begin{equation}\label{einz}
 \shh[h,k]=(H_s h,k), \quad h \in \dom H, \quad k \in \dom \shh;
\end{equation}
here $H_s$ stands for the nonnegative selfadjoint operator part of
$H$. In what follows the form corresponding to $H$ is shortly
denoted by $H[\cdot,\cdot]$. Recall that a selfadjoint relation $H$
admits an orthogonal decomposition $H=H_{s}\oplus (\{0\}\times \mul
H)$, where $H_{s}$ is the selfadjoint operator part $H_{s}=PH$
acting on $\cdom H=\sH\ominus \mul H$ and $P$ stands for the
orthogonal projection onto $\cdom H$. The functional calculus for a
selfadjoint relation can be defined on $\dR\cup\{\infty\}$ by
interpreting $\mul H$ as an eigenspace at $\infty$; in particular,
one defines $H^{\half}=H_{s}^{\half}\oplus (\{0\}\times \mul H)$.
The one-to-one correspondence in \eqref{einz} can also be expressed
as follows
\[
    H[h,k]=(h',k), \quad \{h,h'\}\in H, \quad k \in \dom \shh,
\]
since $(h',k)=(h',Pk)=(H_sh,k)$. The one-to-one correspondence can
be made more explicit via the second representation theorem:
\begin{equation}\label{zwei2}
 H[h,k]=(H_s^\half h, H_s^\half k),  \quad h,k \in \dom H[\cdot,\cdot]=\dom H_s^\half.
\end{equation}
The formulas \eqref{einz}, \eqref{zwei2} are analogs of Kato's
representation theorems for, in general, nondensely defined closed
semi-bounded forms in \cite[Section~VI]{Ka}; see e.g.
\cite{RoBe,AHZS,HSSW07}.

Let $H_1$ and $H_2$ be nonnegative selfadjoint relations in $\sH$,
then $H_1$ and $H_2$ are said to satisfy the inequality $H_1 \ge
H_2$ if
\begin{equation}\label{drei}
 \dom H_{1,s}^{\half}\subset \dom H_{2,s}^{\half}\text{ and }
 \|H_{1,s}^{\half} h\| \ge \|H_{2,s}^{\half} h\|, \quad
 h \in \dom H_{1,s}^{\half}.
\end{equation}
This means that the closed nonnegative forms $H_1[\cdot,\cdot]$ and
$H_2[\cdot,\cdot]$ generated by $H_1$ and $H_2$ satisfy the
inequality $H_1[\cdot,\cdot] \geq H_2[\cdot,\cdot]$; see
\eqref{ineq0}, \eqref{zwei2}.

Given a form $\shh_1$ one can generate a class of forms by means of
bounded operators.

\begin{lemma}\label{lem2.1}
Let $\shh$ be a nonnegative form with $\dom \shh\subset \sH$ and let
$C$ be a bounded operator in $\sH$. Then
\[
 \shh^C[h,k]=\shh[Ch,Ck]
\]
is also a nonnegative form. Moreover, if $\shh$ is closed or
closable the same is true for $\shh^C$.
\end{lemma}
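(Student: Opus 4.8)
The plan is first to pin down what $\shh^C$ actually is as a form: it is the sesquilinear form with domain $\dom\shh^C=\{h\in\sH:Ch\in\dom\shh\}=C^{-1}(\dom\shh)$, a linear subspace of $\sH$ (nonempty, since it contains $\ker C$). On this domain $\shh^C[h,h]=\shh[Ch,Ch]\ge 0$ by nonnegativity of $\shh$, and $\shh^C$ is sesquilinear because $C$ is linear and $\shh$ is sesquilinear. This disposes of the first assertion with no real computation.

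For the closedness statement, the key point is simply that $C\in\bL(\sH)$ is continuous, so it transports convergent and "form-Cauchy" sequences for $\shh^C$ into the corresponding ones for $\shh$. Concretely, I would take $h_n\in\dom\shh^C$ with $h_n\to h\in\sH$ and $\shh^C[h_n-h_m]\to 0$ as $m,n\to\infty$, and set $g_n:=Ch_n\in\dom\shh$. Then $g_n\to Ch$ in $\sH$ by boundedness of $C$, and $\shh[g_n-g_m]=\shh^C[h_n-h_m]\to 0$; closedness of $\shh$ then gives $Ch\in\dom\shh$, hence $h\in\dom\shh^C$, together with $\shh[g_n-Ch]\to 0$, i.e.\ $\shh^C[h_n-h]\to 0$. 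Therefore $\shh^C$ is closed.

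The closability statement is handled by the same device: if $h_n\in\dom\shh^C$, $h_n\to 0$ and $\shh^C[h_n-h_m]\to 0$, then $g_n:=Ch_n\to 0$ in $\sH$ (again using boundedness of $C$) and $\shh[g_n-g_m]\to 0$, so closability of $\shh$ forces $\shh[g_n]=\shh^C[h_n]\to 0$. Since the argument uses only the continuity of $C$ — not injectivity, surjectivity, or self-adjointness — there is no genuine obstacle; the single place where care is needed is exactly the implication $h_n\to h\Rightarrow Ch_n\to Ch$, which is where the hypothesis $C\in\bL(\sH)$ is used and which would fail for an unbounded $C$.
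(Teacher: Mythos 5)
Your proposal is correct and follows essentially the same route as the paper's proof: identify $\dom\shh^C$ as the preimage $C^{-1}(\dom\shh)$, observe nonnegativity is immediate, and use the continuity of $C$ to transport the form-Cauchy sequences, invoking closedness (respectively closability) of $\shh$ to conclude. Nothing further is needed.
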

\begin{proof}
It is clear that $\shh^C$ defines a nonnegative form in $\sH$ whose
domain is the preimage $C^{-1}(\ran C\cap \dom \shh)$, so that $\dom
\shh^C$ can be even a zero subspace. Now assume that $\shh$ is
closed and let $h_n \in \dom \shh^C$ with $h_n\to h$ and
$\shh^C[h_n-h_m]=\shh[Ch_n-Ch_m] \to 0$. Since $C$ is bounded
(continuous) $Ch_n\to Ch$ and by closability of $\shh$ one concludes
that $Ch\in \dom \shh$ and $\shh[Ch_n-Ch] \to 0$. Consequently,
$h\in \dom \shh^C$ and $\shh^C[h_n-h] \to 0$ and thus $\shh^C$ is
closed.

Similarly one proves that $\shh^C$ is closable whenever $\shh$ is
closable.
\end{proof}

The next result gives various characterizations for the inequality
$H_1 \ge H_2$; it can be viewed as an extension of Douglas
factorization in the present situation of linear relations, cf.
\cite{Doug}.

\begin{proposition}\label{douglas}
Let $H_1$ and $H_2$ be nonnegative selfadjoint relations in $\sH$.
Then the following statements are equivalent:
\begin{enumerate}\def\labelenumi{\rm (\roman{enumi})}
\item  $H_1 \ge H_2$;

\item there exists a contraction $C\in \bL(\cH)$ with $\ran C\subset \cdom H_2$
and $\;\ker H_1\subset \ker C$ such that
\[
 C H_{1}^\half \subset H_{2}^\half \quad (\Leftrightarrow\quad
 H_{2}^\half \subset H_{1}^\half C^*),
\]
in fact with these conditions $C$ is uniquely determined and it
satisfies also the following inclusions
\[
 C H_{1,s}^\half \subset H_{2,s}^\half,
 \quad \ran C^*\subset \cdom H_1\ominus \ker H_1,
 \quad \mul H_2\oplus \ker H_2 \subset \ker C^*;
\]

\item there exists a contraction $C\in [\cH]$ with $\ran C\subset \cdom H_2$
and $\ker H_1\subset \ker C$ such that
\begin{equation}\label{H12}
  (P_1H_{2,s}^\half h,P_1H_{2,s}^\half k) = H_{1,s}[C^*h,C^*k], \quad h,k \in \dom H_{2,s}^\half,
\end{equation}
where the form $[H_{1,s}^{C^*}]$ is as defined in Lemma~\ref{lem2.1}
(see also \eqref{zwei2});

\item there exists a contraction $C_1\in [\sH]$ with $\ran C_1\subset \cdom H_2$
such that
\[
 (H_1+I)^{-\half} = (H_2+I)^{-\half}C_1,
\]
here $C_1$ is uniquely determined and $\ker C_1=\mul H_1$;

\item for some nonnegative contraction $M$, $0\le M\le I$, with $\ran M\subset \cdom H_2$
one has
\[
 (H_1+I)^{-1} = (H_2+I)^{-\half}M (H_2+I)^{-\half};
\]

\item the inequality $(H_1+I)^{-1} \leq (H_2+I)^{-1}$ holds.
\end{enumerate}
\end{proposition}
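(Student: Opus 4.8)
The plan is to prove the six conditions equivalent by exploiting, at its core, the Douglas factorization/majorization theorem \cite{Doug} applied to the bounded nonnegative contractions $B_i$, the nonnegative square roots of $(H_i+I)^{-1}$. Since $B_i=B_i^*$ one has $B_iB_i^*=(H_i+I)^{-1}$, $\ker B_i=\ker (H_i+I)^{-1}=\mul H_i$ and hence $\cran B_i=\cdom H_i$. Condition (vi) is literally $B_1B_1^*\le B_2B_2^*$, so Douglas produces a contraction $C_1$ with $B_1=B_2C_1$, and the reduced solution has $\ker C_1=\mul H_1$ and $\ran C_1\subset\cran B_2=\cdom H_2$; moreover any solution with range in $\cdom H_2$ is unique and has these properties, because $\cdom H_2\cap\mul H_2=\{0\}$. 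This is (iv). Multiplying $B_1=B_2C_1$ by adjoints gives $(H_1+I)^{-1}=B_2(C_1C_1^*)B_2$, i.e.\ (v) with $M=C_1C_1^*$, a nonnegative contraction whose range lies in $\ran C_1\subset\cdom H_2$; and (v)$\Rightarrow$(vi) is immediate from $M\le I$. Thus (iv), (v), (vi) are mutually equivalent, purely at the level of bounded operators.

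It remains to bring in (i), (ii), (iii). The equivalence (i)$\Leftrightarrow$(vi) is the monotonicity principle for closed nonnegative forms recalled before the statement: by \eqref{drei} condition (i) \emph{is} the form inequality $H_1[\cdot,\cdot]\ge H_2[\cdot,\cdot]$, and after adding $(\cdot,\cdot)$ and using the second representation theorem \eqref{zwei2} this becomes $(H_1+I)^{-1}\le(H_2+I)^{-1}$ (cf.\ \cite{Ka}, \cite{HSSW07}). For (i)$\Rightarrow$(ii): \eqref{drei} makes the assignment $H_{1,s}^{\half}h\mapsto H_{2,s}^{\half}h$, $h\in\dom H_{1,s}^{\half}$, a well-defined contraction on $\ran H_{1,s}^{\half}$ (dense in $\cdom H_1\ominus\ker H_1$), which, extended by continuity and by $0$ on $\ker H_1\oplus\mul H_1$, yields a contraction $C$ with $\ran C\subset\cdom H_2$, $\ker H_1\subset\ker C$ and $CH_{1,s}^{\half}=H_{2,s}^{\half}$ on $\dom H_{1,s}^{\half}$; since $C$ kills $\mul H_1$ this reads $CH_1^{\half}\subset H_2^{\half}$, the equivalent inclusion $H_2^{\half}\subset H_1^{\half}C^*$ following from $(CH_1^{\half})^*=H_1^{\half}C^*$ for a bounded everywhere-defined $C$. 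Uniqueness of $C$ and the remaining inclusions for $C,C^*$ are read off from this description together with $H_i^{\half}=H_{i,s}^{\half}\oplus(\{0\}\times\mul H_i)$. Conversely, (ii)$\Rightarrow$(i) is direct: for $f\in\dom H_{1,s}^{\half}$ the inclusion $CH_1^{\half}\subset H_2^{\half}$ gives $f\in\dom H_{2,s}^{\half}$ and, using $\ran C\subset\cdom H_2$, $CH_{1,s}^{\half}f=H_{2,s}^{\half}f$, whence $\|H_{2,s}^{\half}f\|\le\|H_{1,s}^{\half}f\|$, which is \eqref{drei}. Finally, (ii)$\Leftrightarrow$(iii): the implication (ii)$\Rightarrow$(iii) is a computation, since $H_2^{\half}\subset H_1^{\half}C^*$ forces $C^*h\in\dom H_{1,s}^{\half}$ with $H_{1,s}^{\half}C^*h=P_1H_{2,s}^{\half}h$ for $h\in\dom H_{2,s}^{\half}$, and polarizing the resulting norm identity gives \eqref{H12} with the form $[H_{1,s}^{C^*}]$ of Lemma~\ref{lem2.1}; for (iii)$\Rightarrow$(ii) one reconstructs, from the bare quadratic-form identity \eqref{H12} and the side conditions $\ran C\subset\cdom H_2$, $\ker H_1\subset\ker C$, the vectorial identity $H_{1,s}^{\half}C^*h=P_1H_{2,s}^{\half}h$ (using that $C^*h$ lands in $\cran H_{1,s}^{\half}$, on which $H_{1,s}^{\half}$ is injective), and then accounts for the multivalued parts via $\mul H_2\subset\mul H_1$, equivalently $\cdom H_1\subset\cdom H_2$.

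The only genuinely non-formal ingredient is Douglas' theorem in its reduced-solution form, which in one stroke supplies the factorization, the contractivity, and the exact kernel and range of the factor; everything else is arranging relation-theoretic identities so that Douglas can be applied to $B_1,B_2$ and so that the resulting bounded factor can be transported to a statement about the generally unbounded square roots $H_i^{\half}$. Accordingly I expect the main obstacle to be precisely this bookkeeping for linear relations: carrying along the orthogonal splittings $H_i=H_{i,s}\oplus(\{0\}\times\mul H_i)$ and the inclusions $\mul H_2\subset\mul H_1$, $\ker H_2\subset\ker H_1$ among the kernels and multivalued parts; handling adjoints of products such as $CH_i^{\half}$ and $H_i^{\half}C^*$ when $C$ is bounded and everywhere defined; and, in the step (iii)$\Rightarrow$(ii), checking that the quadratic-form identity \eqref{H12} together with its side conditions is strong enough to reproduce the vectorial identity $H_{1,s}^{\half}C^*h=P_1H_{2,s}^{\half}h$ and to capture the behaviour on the multivalued parts.
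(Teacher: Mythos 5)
Most of your argument runs parallel to the paper's proof: (i)$\Rightarrow$(ii) by the same construction $H_{1,s}^{\half}f\mapsto H_{2,s}^{\half}f$ extended by continuity and by zero on $\ker H_1\oplus\mul H_1$; (ii)$\Rightarrow$(i) directly from \eqref{drei}; (ii)$\Rightarrow$(iii) by applying the inclusion $H_2^{\half}\subset H_1^{\half}C^*$ and projecting with $P_1$; and (i)$\Leftrightarrow$(vi) by the same (uncommented) appeal to antitonicity of inverses in the form order. The one genuine difference is that you settle the block (iv)--(v)--(vi) by invoking Douglas' theorem (in reduced-solution form) for the bounded contractions $(H_i+I)^{-1/2}$, whereas the paper gets (iv) by applying its own item (ii) to the inequality $(H_1+I)^{-1}\le(H_2+I)^{-1}$ with $C_1=C^*$. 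Both work; your version makes explicit that this block is a purely bounded-operator matter, at the modest cost of citing \cite{Doug} as a black box, while the paper keeps the argument self-contained.

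There is, however, a genuine gap in your step (iii)$\Rightarrow$(ii). You propose to recover the vector identity $H_{1,s}^{\half}C^*h=P_1H_{2,s}^{\half}h$ from the sesquilinear identity \eqref{H12}, ``using that $C^*h$ lands in $\cran H_{1,s}$, on which $H_{1,s}^{\half}$ is injective.'' Injectivity is of no help here: \eqref{H12} only says that the families $\{P_1H_{2,s}^{\half}h\}$ and $\{H_{1,s}^{\half}C^*h\}$, $h\in\dom H_{2,s}^{\half}$, have the same Gram matrix, so they are related by an isometry which need not be the identity; nothing in the data pins down which vector $H_{1,s}^{\half}C^*h$ actually is. Worse, with the bare quadratic reading of \eqref{H12} the implication is simply false: in $\sH=\dC^2$ take $H_1=\mathrm{diag}(4,1)$, $H_2=\mathrm{diag}(1,4)$ and $C$ the coordinate flip; then $C$ is a contraction, $\ran C\subset\cdom H_2$, $\ker H_1\subset\ker C$, and $H_{1,s}[C^*h,C^*k]=(CH_1C^*h,k)=(H_2h,k)=(H_{2,s}^{\half}h,H_{2,s}^{\half}k)$ for all $h,k$, so \eqref{H12} holds, while $H_1\ge H_2$ (and (vi)) fail. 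Hence no bookkeeping can repair your reconstruction step as stated: condition (iii) has to be understood — as the paper in effect does when it writes ``if \eqref{H12} holds then $P_1H_{2,s}^{\half}\subset H_{1,s}^{\half}C^*$'' — as the graph inclusion (equivalently, the vector identity itself), and from that inclusion one concludes by taking adjoints, $(P_1H_{2,s}^{\half})^*\supset(H_1^{\half}C^*)^*\supset CH_1^{\half}$, and by noting $\dom H_1^{\half}\subset\cdom H_1$ so that $P_1$ can be dropped, which yields $CH_1^{\half}\subset H_2^{\half}$.

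A smaller point in the same direction: at the end of (iii)$\Rightarrow$(ii) you invoke $\mul H_2\subset\mul H_1$, i.e. $\cdom H_1\subset\cdom H_2$, to ``account for the multivalued parts.'' That inclusion is a consequence of (i) and is not available before (i)/(ii) has been established; the adjoint argument just described avoids needing it altogether.
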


\begin{proof}
Since $H_j$ is selfadjoint it admits an orthogonal decomposition
$H_j=H_{s,j}\oplus (\{0\}\times \mul H_j)$, where $H_{s,j}=P_jH_j$
is the selfadjoint operator part acting on $\cdom H_j=\sH\ominus
\mul H_j$ and $P_j$ stands for the orthogonal projection onto $\cdom
H_j$; $j=1,2$.

(i) $\Rightarrow$ (ii) Let $f\in \dom H_1^{\half}$ and define $C_0$
by setting $C_0H_{1,s}^{\half}f=H_{2,s}^{\half}f$. Then the
inequality in \eqref{drei} shows that
$\|C_0H_{1,s}^{\half}f\|=\|H_{2,s}^{\half}f\|\le
\|H_{1,s}^{\half}f\|$, and hence $C_0$ is a well-defined and
contractive operator, which can be continued to a contraction from
the closed subspace $\cran H_{1,s}$ into the closed subspace $\cran
H_{2,s}$. By extending $C_0$ to $\sH\ominus \cran H_{1,s}$ as a zero
operator gives a contractive operator $C\in[\cH]$ with $\ran
C\subset \cdom H_2\ominus \ker H_2$ and $\mul H_1\oplus \ker
H_1\subset \ker C$. The last two inclusion are equivalent to the
inclusions stated for $\ker C^*$ and $\ran C^*$ in (ii). Moreover,
by construction $CH_{1,s}^{\half}=C_0H_{1,s}^{\half}\subset
H_{2,s}^{\half}$ and
$CH_{1}^{\half}=CP_1H_{1}^{\half}=CH_{1,s}^{\half}$, so that
$CH_{1}^{\half}\subset H_{2,s}^{\half}\subset H_{2}^{\half}$. By
boundedness of $C$, $C H_{1}^\half \subset H_{2}^\half$ is
equivalent to $H_{1}^\half C^*\supset H_{2}^\half $.

Finally it is shown that the conditions $\ran C\subset \cdom H_2$,
$\ker H_1\subset \ker C$, and $C H_{1}^\half \subset H_{2}^\half$
determine $C$ uniquely. From the first and third condition one
obtains
\[
 C H_{1}^\half=P_2C H_{1}^\half \subset P_2H_{2}^\half=H_{2,s}^{\half}
\]
and this implies that $\mul H_{1}\subset \ker C$. Hence $C
H_{1}^\half=C P_1 H_{1}^\half= CH_{1,s}^{\half}\subset
H_{2,s}^{\half}$ and now the condition $\ker H_1\subset \ker C$
implies that $C$ restricted to the subspace $\cdom H_1\ominus\ker
H_1$ is uniquely determined by the condition
$CH_{1,s}^{\half}\subset H_{2,s}^{\half}$. It coincides with the
closure of $C_0$ on $\cdom H_1\ominus\ker H_1$ and is a zero
operator on the orthogonal complement $\mul H_1\oplus \ker H_1$.

(ii) $\Rightarrow$ (i) This implication is obtained directly by
applying the definition in \eqref{drei}.

(ii) $\Leftrightarrow$ (iii) If $C$ is as in (ii) then $H_{2}^\half
\subset H_{1}^\half C^*$ implies that $P_1H_{2}^\half \subset
P_1H_{1}^\half C^*$ and in view of $\cdom H_1\subset \cdom H_2$ this
leads to $P_1H_{2,s}^\half \subset H_{1,s}^\half C^*$ and
\eqref{H12}.

Conversely, if \eqref{H12} holds then $P_1H_{2,s}^\half \subset
H_{1,s}^\half C^*\subset H_{1}^\half C^*$ and taking adjoints in
$\sH$ it is easy to check that
\[
 H_{2}^\half P_1 =(H_{2,s}^\half)^* P_1 = (P_1H_{2,s}^\half)^*
 \supset (H_{1}^\half C^*)^* \supset CH_{1}^\half,
\]
which implies that $CH_{1}^\half\subset H_{2}^\half$.

(i) $\Leftrightarrow$ (vi) Recall that (i) is equivalent to
$H_1^{-1}\le H_2^{-1}$ and hence also to $H_1+I \ge H_2+I$ and
$(I+H_1)^{-1}\le (I+H_2)^{-1}$.

(ii), (vi) $\Rightarrow$ (iv) Apply (ii) to the inequality
$(I+H_1)^{-1}\le (I+H_2)^{-1}$ with $C_1=C^*$; here the second
inclusion from (ii) holds as an equality $(H_1+I)^{-\half} =
(H_2+I)^{-\half}C_1$ due to boundedness. Moreover, $\ran C_1\subset
\sH\ominus \ker((H_2+I)^{-\half})=\cdom H_2$ clearly implies that
$\ker C_1=\ker(H_1+I)^{-\half}=\mul H_1$.

(vi) $\Rightarrow$ (v) Write $(H_1+I)^{-1} =
(H_2+I)^{-\half}C_1((H_2+I)^{-\half}C_1)^*=(H_2+I)^{-\half}C_1C_1^*(H_2+I)^{-\half}$
and take $M=C_1C_1^*$.

(v) $\Rightarrow$ (vi) This is clear.
\end{proof}

Observe that if $\cdom H_1=\cdom H_2$, then \eqref{H12} can be
expressed using the forms corresponding to $H_1$ and $H_2$ in the
following simpler form:
\[
 H_2[\cdot,\cdot] \subset H_{1}^{C^*}[\cdot,\cdot].
\]
Notice also that for any fixed $t>0$ the conditions (iii) and (v)
can be also replaced by the equivalent conditions $(H_1+t)^{-\half}
= (H_2+t)^{-\half}C_t$, $\|C_t\|\le 1$, and $(H_1+I)^{-1} \leq
(H_2+I)^{-1}$, respectively; see \cite[Lemma~3.2]{HSSW07}.

To an arbitrary nonnegative l.r. $S$ in $\sH$ one can associate the
following Cayley transform
\begin{equation}\label{Cayley}
  S\mapsto B=\cC(S)=-I+2(I+S)^{-1}
  =\left\{\{f+f',f-f'\},\;\{f,f'\}\in S\right\};
\end{equation}
if $S$ is an operator then \eqref{Cayley} can be rewritten in the
form $\cC(S)=(I- S)(I+ S)^{-1}$. The Cayley transform \eqref{Cayley}
establishes a one-to-one correspondence between all nonnegative
symmetric (selfadjoint) relations $S$ and all (graphs of) Hermitian
(selfadjoint, respectively) contractions $B$ with inverse transform
\begin{equation}\label{Cayley2}
 B\mapsto S=\cC(B)=(I- B)(I+ B)^{-1}=\left\{\{(I+ B)h,
(I- B)h\}:\;h\in \sH\,\right\}.
\end{equation}
For the proof of the next statement, see \cite{AHS}.

\begin{lemma}
\label{l} Let $\wt{S}$ be a nonnegative selfadjoint relation and let
$\wt B=\cC(\wt S)$ be its Cayley transform. Then
\[
\begin{array}{l}
 \cD[\wt{S}]=\ran(I+\wt B)^{1/2};\\
 \wt{S}[u,v]=-(u,v)+2\left((I+\wt B)^{(-1/2)}u,(I+\wt B)^{(-1/2)}v\right),
\quad u,v\in \cD[\wt{S}];\\
 \cD[\wt{S}^{-1}]=\ran(I-\wt B)^{1/2};\\
 \wt{S}^{-1}[f,g]=-(f,g)+2\left((I-\wt B)^{(-1/2)}f,(I-\wt B)^{(-1/2)}g\right),
 \quad f,g\in \cD[\wt{S}^{-1}].
\end{array}
\]
\end{lemma}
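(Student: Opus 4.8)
The plan is to reduce all four identities to the single operator identity $I+\wt B=2(I+\wt S)^{-1}$ and then extract the form domain and the form from the functional calculus of the bounded nonnegative selfadjoint operator $(I+\wt S)^{-1}$, keeping track of the decomposition of $\wt S$ into its operator part $\wt S_s$ on $\cdom\wt S$ and its multivalued part $\mul\wt S$. First, since $\wt S$ is nonnegative selfadjoint, $-1\in\rho(\wt S)$, so $(I+\wt S)^{-1}$ is a bounded nonnegative selfadjoint operator on $\sH$ with $\ker(I+\wt S)^{-1}=\mul\wt S$ and operator part $(I+\wt S_s)^{-1}$; from \eqref{Cayley} one gets $I+\wt B=2(I+\wt S)^{-1}$, hence $(I+\wt B)^{1/2}=\sqrt2\,\big((I+\wt S_s)^{-1/2}\oplus 0_{\mul\wt S}\big)$. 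Taking ranges and using that $(I+\wt S_s)^{-1/2}$ is the bounded inverse of $(I+\wt S_s)^{1/2}$, that $(1+t)^{1/2}$ and $t^{1/2}$ generate selfadjoint operators with a common domain, and the second representation theorem \eqref{zwei2}, we obtain
\[
 \ran(I+\wt B)^{1/2}=\ran(I+\wt S_s)^{-1/2}=\dom(I+\wt S_s)^{1/2}=\dom\wt S_s^{1/2}=\cD[\wt S],
\]
which is the first assertion.

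Next I would identify the generalized power $(I+\wt B)^{(-1/2)}$ on $\cD[\wt S]$. Since $(I+\wt B)^{1/2}$ annihilates $\mul\wt S$ and its restriction to $\cdom\wt S$ equals $\sqrt2\,(I+\wt S_s)^{-1/2}$, which is injective with range $\cD[\wt S]$, for $u\in\cD[\wt S]=\dom(I+\wt S_s)^{1/2}$ one has $(I+\wt B)^{(-1/2)}u=2^{-1/2}(I+\wt S_s)^{1/2}u$. Therefore, for $u,v\in\cD[\wt S]$,
\[
 2\big((I+\wt B)^{(-1/2)}u,(I+\wt B)^{(-1/2)}v\big)=\big((I+\wt S_s)^{1/2}u,(I+\wt S_s)^{1/2}v\big)=(u,v)+\big(\wt S_s^{1/2}u,\wt S_s^{1/2}v\big),
\]
the last step being the functional-calculus identity $\|(I+\wt S_s)^{1/2}u\|^2=\|u\|^2+\|\wt S_s^{1/2}u\|^2$ on $\dom\wt S_s^{1/2}$ together with polarization. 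Since $\big(\wt S_s^{1/2}u,\wt S_s^{1/2}v\big)=\wt S[u,v]$ by \eqref{zwei2}, rearranging yields the second assertion.

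For the identities involving $\wt S^{-1}$ I would note that $\cC(\wt S^{-1})=-\wt B$: this is immediate from the graph description in \eqref{Cayley}, since as $\{f,f'\}$ runs over $\wt S$ the pair $\{f+f',\,-(f-f')\}=\{f'+f,\,f'-f\}$ runs over $\cC(\wt S^{-1})$. As $-\wt B$ is again a selfadjoint contraction, $\wt S^{-1}=\cC^{-1}(-\wt B)$ is a nonnegative selfadjoint relation, so applying the two identities already established to $\wt S^{-1}$ and $-\wt B$ in place of $\wt S$ and $\wt B$ gives $\cD[\wt S^{-1}]=\ran(I-\wt B)^{1/2}$ and the stated formula for $\wt S^{-1}[\cdot,\cdot]$.

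The routine parts are the standard functional-calculus facts ($\dom(I+\wt S_s)^{1/2}=\dom\wt S_s^{1/2}$ and the norm identity above) and the relation bookkeeping (operator and multivalued parts, kernels, ranges). The one step requiring genuine care is the identification of $(I+\wt B)^{(-1/2)}$ on $\cD[\wt S]$: one must check that $\ran(I+\wt B)^{1/2}\subset\cdom\wt S$, that $(I+\wt B)^{1/2}\uphar\cdom\wt S$ is injective, and that the domains $\cD[\wt S]$, $\dom(I+\wt S_s)^{1/2}$ and $\ran(I+\wt S_s)^{-1/2}$ all coincide, so that the formula for $(I+\wt B)^{(-1/2)}u$ is meaningful precisely for $u\in\cD[\wt S]$.
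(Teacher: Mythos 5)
Your proof is correct: the identity $I+\wt B=2(I+\wt S)^{-1}$, the splitting into operator part and multivalued part, the functional-calculus identification of $(I+\wt B)^{(-1/2)}$ on $\ran(I+\wt B)^{1/2}$, and the reduction of the $\wt S^{-1}$ statements via $\cC(\wt S^{-1})=-\wt B$ all check out against the paper's conventions (\eqref{Cayley}, \eqref{zwei2}, and the meaning of $(\cdot)^{(-1/2)}$ as in \eqref{opw}). The paper itself gives no proof but refers to \cite{AHS}, and your argument is the standard spectral-calculus route one would expect there, so no substantive difference to report.
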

If $\wt S$ is a nonnegative selfadjoint relation, then the form
domain $\cD[\wt S]$ is a Hilbert space with respect to the inner
product
\begin{equation}
\label{inpr}
 (f,g)_{\wt S}:=\wt S[f,g]+(f,g).
\end{equation}
Observe, that if $\wt B=\cC(\wt S)$ then Lemma \ref{l} shows that
\begin{equation}
\label{cinpr}
(f,g)_{\wt S} 
 =2\left((I+\wt B)^{(-1/2)}f,(I+\wt B)^{(-1/2)}g\right),\quad
f,g\in\cD[\wt S]=\ran (I+\wt B)^{1/2}.
\end{equation}

\subsection{Kre\u\i n shorted operators}

For every nonnegative bounded operator $\cS$ in the Hilbert space
$\cH$ and every subspace $\cK\subset \cH$ M.G.~Kre\u{\i}n \cite{Kr}
defined the operator $\cS_{\cK}$ by the relation
\[
 \cS_{\cK}=\max\left\{\cZ\in \bL(\cH):\,
    0\le \cZ\le \cS, \, {\ran}\cZ\subseteq{\cK}\,\right\}.
\]
The equivalent definition
\[
 \left(\cS_{\cK}f, f\right)=\inf\limits_{\f\in \cK^\perp}\left\{\left(\cS(f + \varphi),f +
 \varphi\right)\right\},
\quad  f\in\cH.
\]
Here $\cK^\perp:=\cH\ominus{\cK}$. The properties of $\cS_{\cK}$,
were studied by M.~Kre\u{\i}n and by other authors (see \cite{ARL1}
and references therein).
 $\cS_{\cK}$ is called the \textit{shorted
operator} (see \cite{AD}, \cite{AT}). It is proved in \cite{Kr} that
$S_{\cK}$ takes the form
\[
 \cS_{\cK}=\cS^{1/2}P_{\Omega}\cS^{1/2},
\]
 where $P_{\Omega}$ is the orthogonal projection in
$\cH$ onto the subspace
\[
 \Omega=\{\,f\in \cran \cS:\,\cS^{1/2}f\in {\cK}\,\}=\cran \cS\ominus
\cS^{1/2}\cK^\perp.
\]
Moreover \cite{Kr},
\begin{equation}
\label{Sh2}
 {\ran}\cS_{\cK}^{1/2}={\ran}\cS^{1/2}P_\Omega={\ran}\cS^{1/2}\cap{\cK}.
\end{equation}
It follows that
\[
  \cS_{\cK}=0 \iff  \ran \cS^{1/2}\cap \cK=\{0\}.
\]
A bounded selfadjoint operator $\cS$ in $\cH$ has the block-matrix
form
\[
\cS=\begin{pmatrix}\cS_{11}&\cS_{12}\cr \cS^*_{12}&\cS_{22}
\end{pmatrix}:\begin{array}{l}\cK\\\oplus\\\cK^\perp \end{array}\to
\begin{array}{l}\cK\\\oplus\\\cK^\perp \end{array}.
\]
It is well known (see \cite{KrO}) that
 \textit{the
operator $\cS$ is nonnegative if and only if
\[
\cS_{22}\ge 0,\; \ran \cS^*_{12}\subset\ran \cS^{1/2}_{22},\,\;
\cS_{11}\ge
\left(\cS^{-1/2}_{22}\cS^*_{12}\right)^*\left(\cS^{-1/2}_{22}\cS^*_{12}\right)
\]
and the operator $\cS_\cK$ is given by the block matrix
\begin{equation}
\label{shormat1}
\cS_\cK=\begin{pmatrix}\cS_{11}-\left(\cS^{-1/2}_{22}\cS^*_{12}\right)^*\left(\cS^{-1/2}_{22}\cS^*_{12}\right)&0\cr
0&0\end{pmatrix},
\end{equation}
} where $\cS^{-1/2}_{22}$ is the Moore-Penrose pseudo-inverse.
 If $\cS^{-1}_{22}\in\bL(\cK^\perp)$ then
\[
\cS_\cK=\begin{pmatrix}\cS_{11}-\cS_{12}\cS^{-1}_{22}\cS^*_{12}&0\cr
0&0\end{pmatrix}
\]
and the operator $\cS_{11}-\cS_{12}\cS^{-1}_{22}\cS^*_{12}$ is
called the Schur complement of the matrix $\cS$. From
\eqref{shormat1} it follows that
\[
\cS_\cK=0\iff \ran \cS^*_{12}\subset\ran
\cS^{1/2}_{22}\quad\mbox{and}\quad
\cS_{11}=\left(\cS^{-1/2}_{22}\cS^*_{12}\right)^*\left(\cS^{-1/2}_{22}\cS^*_{12}\right).
\]
\subsection{Selfadjoint contractive extensions of a nondensely defined Hermitian contraction}

Let $B$ be a Hermitian contraction in $\sH$ defined on the subspace
$\sH_0$, i.e., $(Bf,g)=(f,Bg)$ for all $f,g\in\sH_0$ and $\|B\|\le
1$. Set $\sN=\sH\ominus\sH_0$. A description of all selfadjoint
contractive ($sc$-)extensions of $B$ in $\sH$ was given by
M.G.~Kre\u{\i}n \cite{Kr}. In fact, he showed that all
$sc$-extensions of $B$ form an operator interval $[B_\mu, B_M]$,
where the extensions $B_\mu$ and $B_M$ can be characterized by
\begin{equation}
\label{extr} \left(I+B_\mu\right)_{\sN}=0, \quad
\left(I-B_M\right)_{\sN}=0,
\end{equation}
respectively. The operator $B$ admits a unique $sc$-extension if and
only if
\[
 \sup\limits_{\f\in\dom B}\cfrac{|(B\f,h)|^2}{||\f||^2-||B\f||^2}=\infty
\]
for all $h\in\sN\setminus\{0\}.$

 A description of the operator interval $[B_\mu, B_M]$ is given by
 the following equality
(cf. \cite{Kr}, \cite{KrO}):
\begin{equation}
\label{param1}
 \wt{B} = (B_M + B_\mu)/2 + (B_M - B_\mu)^{1/2} \wt Z (B_M - B_\mu)^{1/2} /2,
\end{equation}
where $\wt Z$ is a $sc$-operator in the subspace $\cran(B_M -
B_{\mu})\subseteq{\sN}$. It follows from \eqref{extr}, for instance,
that for every $sc$-extension $\wt B$ of $B$ the following
identities hold:
\[
 (I-\wt B)_{\sN}=B_M-\wt B,
\quad
 (I+\wt B)_{\sN} =\wt B-B_\mu,
\]
cf. \cite{Kr}. Hence, according to \eqref{Sh2}
\begin{equation}
\label{novrav}
\begin{array}{l}
 \ran(I-\wt B)^{1/2}\cap{\sN}=\ran(B_M-\wt B)^{1/2},\\
 \ran(I+\wt B)^{1/2}\cap{\sN}=\ran(\wt B-B_\mu)^{1/2}.
\end{array}
\end{equation}
\subsection{Nonnegative linear relations and their nonnegative selfadjoint extensions}
Let $S$ be a nonnegative l.r. in $\sH$. Recall the definition of the
Friedrichs extension $S_{\rm{F}}$ of $S$ (see \cite{Ka} for the case
of densely defined $S$ and \cite{RoBe} for nonnegative l.r. case):
$S_{\rm{F}}$ is the unique selfadjoint relation associated with the
closure of the form $S(\f,\psi)=(\f',\psi)$, $\{\f,\f'\}\in S$,
$\psi \in\dom S$:
\begin{equation}
\label{formclos}
 S_{\rm{F}}[\cdot,\cdot]=S[\cdot,\cdot]:=\clos S(\cdot,\cdot).
\end{equation}

Consider the Cayley transform $B=\cC(S)$ of $S$ in \eqref{Cayley}.
Then $B$ is a Hermitian contraction in $\sH$ and the formulas
\[
\wt B=-I + 2(I+\wt S)^{-1},\quad  \wt S=(I-\wt B)(I+\wt B)^{-1}
\]
establish a one-to-one correspondence between $sc$-extensions $\wt
B$ of $B$ and nonnegative selfadjoint extensions $\wt S$ of $S$. In
his famous paper \cite{Kr} M.G. Kre\u{\i}n proved, with $S$ being
densely defined in $\sH$, that the Cayley transform of the left
endpoint $B_\mu$ of the operator interval $[B_\mu, B_M]$ coincides
with the Friedrichs extension $S_{\rm{F}}$ of $S$, i.e.,
\[
 S_{\rm{F}}=(I-B_\mu)(I+B_\mu)^{-1}.
\]
This equality remains valid when $S$ is a l.r.; see
\cite{Ar4,CS,HMS}. Notice that $\cD[S]=\cD[S_{\rm{F}}]$. In addition
\begin{enumerate}
\item
if $S$ is a densely defined operator, then $S_F$ is characterized by
\[
\dom S_{\rm{F}}=\dom S^*\cap\cD[S];
\]
\item if $S$ is a nondensely defined operator, then
\[
S_{\rm{F}}=\left\{\,\{f, S_{0{\rm{F}}}f+h\}:\, f\in\dom S_{0F},\;
h\in\sH\ominus\sH_0\,\right\},
\]
where $S_{0F}$ stands for the Friedrichs extension of the
nonnegative operator $S_0:=P_{\sH_0}S$ having a dense domain in
$\sH_0=\cdom S$.
\end{enumerate}

Let $z\in\dC\setminus\dR_+$ and let $\sN_z=\sH\ominus\ran (S^*-\bar
z I)$ be the defect subspace of $S$ at $z$. Recall that
\[
\cD[S]\cap\sN_z=\{0\}, \quad z\in\dC\setminus\dR_+;
\]
see e.g. \cite{Kr,Ar4}. The Cayley transform
$S_{\rm{K}}:=(I-B_M)(I+B_M)^{-1}$ of the right endpoint possesses
the following property (see \cite{AN} for the operator case and
\cite{CS} for the case of l.r.):
\[
 S_{\rm{K}}=\left((S^{-1})_{\rm{F}}\right)^{-1}.
\]
It is a consequence of Proposition~\ref{douglas} and the formula
\eqref{Cayley} that if the Hermitian contractions $\wt B_1$ and $\wt
B_2$ satisfy the inequality $\wt B_1\le \wt B_2$, then equivalently
their Cayley transforms $\wt S_1=\cC(\wt B_1)$ and $\wt S_2=\cC(\wt
B_2)$ satisfy the reverse inequality $\wt S_1\ge \wt S_2$. It
follows that the linear relations $S_{\rm{F}}$ and $S_{\rm{K}}$ are
the maximal and minimal (in the sense of quadratic forms, see
\eqref{drei}) among all nonnegative selfadjoint extensions, i.e., if
$\wt S$ is a nonnegative selfadjoint extension of $S$, then
\begin{enumerate}
\item $\cD[S]\subset\cD[\wt S]\subseteq\cD[S_{\rm{K}}],$
\item $S[\f]\ge \wt S[\f]$ for all $\f\in\cD[S]$ and $\wt S[u]\ge
S_{\rm{K}}[u]$ for all $u\in\cD[\wt S]$.
\end{enumerate}
These inclusions and inequalities were originally established by
M.G. Kre\u\i n in \cite{Kr} for a densely defined $S$ and in
\cite{CS} for a l.r. $S$. The minimality property of $S_{\rm{K}}$ is
obtained by Ando and Nishio in \cite{AN} for nondensely defined
operator $S$.

The minimal nonnegative selfadjoint extension  $S_{\rm{K}}$ we will
call the \textit{Kre\u{\i}n-von-Neumann extension} of $S$. Recall
that $S$ admits a unique nonnegative selfadjoint extension, i.e.
$S_{\rm{K}}=S_{\rm{F}}$, if and only if for at least for one (and
then for all) $z\in\dC\setminus\dR_+$ the following condition is
fulfilled:
$$\sup\limits_{\f\in\dom S}\frac{\left|(\f,\f_{z})\right|^2}{(S \f,\f)}
=\infty\quad \mbox {for every}\quad\f_{z}\in\sN_{z}\setminus\{0\}.$$

The domain $\cD[S_{\rm{K}}]$ and $S_{\rm{K}}[u]$ can be
characterized as follows \cite{AN}, \cite{Ar4}:
\[
\begin{array}{l}
\cD[S_{\rm{K}}]=\left\{u\in \sH:
\;\sup\limits_{\f\in\dom S}\cfrac{|(S\f,u)|^2}{(S\f,\f)}<\infty\right\},\\
S_{\rm{K}}[u]=\sup\limits_{\f\in\dom(S)}\cfrac{|(S\f,u)|^2}{(S\f,\f)},\;
u\in \cD[S_{\rm{K}}].
\end{array}
\]
Observe that the form $S_{\rm{F}}[\cdot,\cdot]$ is the closed
restriction of the form $S_{\rm{K}}[\cdot,\cdot]$ and the form
$S^{-1}_{\rm{K}}[\cdot,\cdot]$ is the closed restriction of the form
$S^{-1}_{\rm{F}}[\cdot,\cdot]$. Besides (see \cite{Ar3})
\[
\begin{array}{l}
\inf\limits_{\f\in\cD[S_{\rm{F}}]} S_{\rm{K}}[f-\f]=0\quad\mbox{for all}\quad f\in\cD[S_{\rm{K}}],\\
\inf\limits_{\psi\in\cD[S^{-1}_{\rm{K}}]}
S^{-1}_{\rm{F}}[g-\psi]=0\quad\mbox{for all} \quad
g\in\cD[S^{-1}_{\rm{F}}].
\end{array}
\]

\section{Special pairs of nonnegative selfadjoint linear relations and corresponding pairs of selfadjoint contractions}

Let $\cA$ and $\cB$ be bounded selfadjoint operators which are
nonnegative and satisfy the inequality $\cA\le \cB$. In this case
Proposition~\ref{douglas} yields the following equivalences (see
also \cite{AHS} and the references therein):
\begin{enumerate}\def\labelenumi{\rm (\roman{enumi})}
\item $\cA\le \cB$;
\item $\cA=\cB^{1/2}\cZ\cB^{1/2}$, where $\cZ$ is a nonnegative selfadjoint contraction in
$\cran\cB$;
\item $\ran \cA^{1/2}=\cB^{1/2}\ran\cZ^{1/2}$, where $\cZ$ is as in (ii).
\end{enumerate}
Observe that if $0\le \cZ\le I$ then the block operator
\[
 P=\begin{pmatrix} \cZ & (\cZ-\cZ^2)^{1/2} \\ (\cZ-\cZ^2)^{1/2} & I-\cZ\end{pmatrix}
\]
satisfies $P^*=P=P^2$. In particular, this shows that $\cZ$ itself
is an orthogonal projection precisely when $\ran \cZ^{1/2}\cap\ran
(I-\cZ)^{1/2}=\ran(\cZ-\cZ^2)^{1/2}=\{0\}$. Since
\[
 \ran \cA^{1/2}\cap\ran(\cB-\cA)^{1/2}=\ran B^{1/2}\cZ^{1/2}\cap\ran B^{1/2}(I-\cZ)^{1/2}
 =\ran B^{1/2}(\cZ-\cZ^2)^{1/2},
\]
one concludes the following equivalence for $\cZ$ in (ii) and (iii):
\begin{equation}\label{Zproj}
 \cZ=\cZ^2 \quad \iff \quad \ran \cA^{1/2}\cap\ran(\cB-\cA)^{1/2}=\{0\}.
\end{equation}

Recall that for closed nonnegative forms $\shh_1 \subset \shh_2$
implies $\shh_1 \geq \shh_2$. The next proposition gives some
necessary and sufficient conditions for the inclusion $\shh_1
\subset \shh_2$ to hold by means of the Cayley transforms of
representing selfadjoint relations, and hence, can be seen as a
further specification of Proposition~\ref{douglas}.

\begin{proposition}\label{new0}
Let $\wt S_0$ and $\wt S_1$ be two nonnegative selfadjoint linear
relations and let
 \[
\graph\wt B_k=\cC(\wt S_k)=\left\{\{f+f',f-f'\},\;\{f,f'\}\in\wt
S_k\right\} =-I+2(I+\wt S_k)^{-1},\;k=0,1,
\]
be their Cayley transforms. Suppose that $\wt S_1\le \wt S_0$ or,
equivalently, that $\wt B_0\le \wt B_1$. Then the following
conditions are equivalent:
\begin{enumerate}
\def\labelenumi{\rm (\roman{enumi})}
\item the form  $\wt{S}_0[\cdot,\cdot]$ is a closed restriction of the form
       $\wt{S}_1[\cdot,\cdot]$;
\item the following equality holds
\[
 \cD[\wt S_1] \ominus_{\wt S_1}\ \cD[\wt S_0]=\ran(\wt B_1-\wt B_0)^{1/2};
\]
\item the following equality holds
\[
I+\wt B_0=(I+\wt B_1)^{1/2}\Pi(I+\wt B_1)^{1/2},
\]
where $\Pi$ is orthogonal projection acting in $\cran(I+\wt B_1)$;
\item the following equality holds
\[
\ran(I+\wt B_0)^{1/2}\cap\ran(\wt B_1-\wt B_0)^{1/2}=\{0\}.
\]
\end{enumerate}
\end{proposition}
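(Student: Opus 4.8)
The plan is to establish the chain of equivalences by combining Lemma~\ref{l}, which identifies the form domain of a nonnegative selfadjoint relation $\wt S_k$ with $\ran(I+\wt B_k)^{1/2}$ and its inner product \eqref{cinpr} with the one induced by $(I+\wt B_k)^{-1/2}$, together with the Douglas-type machinery in Proposition~\ref{douglas} and the shorted-operator identities of Section~2.2. The key translation is this: under the standing hypothesis $\wt B_0\le \wt B_1$, we have $I+\wt B_0\le I+\wt B_1$, so by the equivalences recalled at the start of Section~3 (with $\cA=I+\wt B_0$, $\cB=I+\wt B_1$) there is a nonnegative contraction $\cZ$ in $\cran(I+\wt B_1)$ with $I+\wt B_0=(I+\wt B_1)^{1/2}\cZ(I+\wt B_1)^{1/2}$. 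With this $\cZ$ fixed, (iii) says precisely that $\cZ$ is an orthogonal projection, and (iv) is exactly the condition $\ran(I+\wt B_0)^{1/2}\cap\ran(\wt B_1-\wt B_0)^{1/2}=\{0\}$; so the equivalence (iii)$\Leftrightarrow$(iv) is immediate from \eqref{Zproj}.

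Next I would prove (i)$\Leftrightarrow$(iii). By Lemma~\ref{l}, $\cD[\wt S_0]=\ran(I+\wt B_0)^{1/2}\subset\ran(I+\wt B_1)^{1/2}=\cD[\wt S_1]$ always holds under $\wt B_0\le\wt B_1$ (using $\ran \cA^{1/2}\subset\ran \cB^{1/2}$), and on $\cD[\wt S_0]$ the form values are $\wt S_k[u]=-\|u\|^2+2\|(I+\wt B_k)^{-1/2}u\|^2$. So $\wt S_0[\cdot,\cdot]$ is the \emph{restriction} of $\wt S_1[\cdot,\cdot]$ to $\cD[\wt S_0]$ exactly when $\|(I+\wt B_0)^{-1/2}u\|=\|(I+\wt B_1)^{-1/2}u\|$ for all $u\in\ran(I+\wt B_0)^{1/2}$; writing $u=(I+\wt B_0)^{1/2}v$ and using the factorization $I+\wt B_0=(I+\wt B_1)^{1/2}\cZ(I+\wt B_1)^{1/2}$ one computes $\|(I+\wt B_0)^{-1/2}u\|^2$ in terms of $\cZ$ and reduces the identity to $\|\cZ^{1/2}w\|^2=\|\cZ w\|^2$ on a dense set, i.e.\ $\cZ=\cZ^2$. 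That the restriction is automatically \emph{closed} follows since $\wt S_0[\cdot,\cdot]$ is itself a closed form (it represents the selfadjoint relation $\wt S_0$), so the word ``closed'' in (i) carries no extra content beyond ``restriction''; I would remark on this to make the statement of (i) precise.

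Finally, for (ii) I would identify the orthogonal complement of $\cD[\wt S_0]$ inside the Hilbert space $(\cD[\wt S_1],(\cdot,\cdot)_{\wt S_1})$. By \eqref{cinpr}, $(f,g)_{\wt S_1}=2\big((I+\wt B_1)^{-1/2}f,(I+\wt B_1)^{-1/2}g\big)$, so the map $U:f\mapsto (I+\wt B_1)^{-1/2}f$ is (up to the constant $\sqrt2$) a unitary from $\cD[\wt S_1]$ onto $\cran(I+\wt B_1)$ carrying $\cD[\wt S_0]=\ran(I+\wt B_0)^{1/2}=(I+\wt B_1)^{1/2}\ran\cZ^{1/2}$ onto $\ran\cZ^{1/2}$, whose orthogonal complement in $\cran(I+\wt B_1)$ is $\cran\cZ^{1/2}{}^{\perp}=\ker\cZ$ if (iii) holds, and more generally $\cran(I-\cZ)^{1/2}$ when $\cZ$ is a projection. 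Pulling this back through $U^{-1}=(I+\wt B_1)^{1/2}$ and using $\wt B_1-\wt B_0=(I+\wt B_1)^{1/2}(I-\cZ)(I+\wt B_1)^{1/2}$ together with \eqref{Sh2}-type range identities gives $\cD[\wt S_1]\ominus_{\wt S_1}\cD[\wt S_0]=\ran(\wt B_1-\wt B_0)^{1/2}$; so (iii)$\Rightarrow$(ii), and conversely (ii) forces the complement to have trivial intersection with $\cD[\wt S_0]$, which unwinds to (iv). The main obstacle I anticipate is the careful bookkeeping with Moore--Penrose pseudo-inverses and the non-closedness of the various ranges: one must check that the algebraic identities among $\ran\cZ^{1/2}$, $\ran(I-\cZ)^{1/2}$ and $\ran(\cZ-\cZ^2)^{1/2}$ are used only where legitimate, and that the unitary $U$ genuinely intertwines the relevant subspaces rather than just their closures.
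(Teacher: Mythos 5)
Your proposal is correct, and in substance it runs on the same machinery as the paper's proof: Lemma~\ref{l} (i.e.\ \eqref{cinpr}), the Douglas-type factorization from Proposition~\ref{douglas} giving $I+\wt B_0=(I+\wt B_1)^{1/2}\cZ(I+\wt B_1)^{1/2}$ with $\cZ=WW^*$ and $W$ as in \eqref{opw}, and the criterion \eqref{Zproj} for (iii)$\Leftrightarrow$(iv). The differences are in the organization of the implications, and one of them is a genuine simplification: the paper runs the cycle (i)$\Rightarrow$(iii)$\Rightarrow$(ii)$\Rightarrow$(i), and its step (ii)$\Rightarrow$(i) needs the commutation relation $W^*D_{W^*}=D_WW^*$ to recover the isometry of $W$; you instead prove (i)$\Leftrightarrow$(iii) directly by the form computation and close the loop with (ii)$\Rightarrow$(iv) by the one-line observation that $\cD[\wt S_1]\ominus_{\wt S_1}\cD[\wt S_0]$ meets $\cD[\wt S_0]=\ran(I+\wt B_0)^{1/2}$ only in $\{0\}$, since $(\cdot,\cdot)_{\wt S_1}$ is positive definite. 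Your identification of $\cD[\wt S_1]\ominus_{\wt S_1}\cD[\wt S_0]$ with $(I+\wt B_1)^{1/2}\ker\cZ$ via the map $f\mapsto(I+\wt B_1)^{(-1/2)}f$ is the same computation as \eqref{dorth}--\eqref{parsum3}, just packaged as a unitary equivalence, which is a clean way to put it. Two small points should be made explicit in a write-up: first, (iii) really does pin down $\cZ$, because $(I+\wt B_1)^{1/2}$ is injective with dense range on $\cran(I+\wt B_1)$, so the projection $\Pi$ in (iii) must coincide with $\cZ=WW^*$; second, your reduction of (i) to $\|\cZ^{1/2}w\|^2=\|\cZ w\|^2$ on a dense set amounts to testing the forms on $\dom\wt S_0=\ran(I+\wt B_0)$ only, so one should add that $\dom\wt S_0$ is a core of $\wt S_0[\cdot,\cdot]$ and that $\wt S_1[\cdot]\le\wt S_0[\cdot]$ allows the equality to pass to all of $\cD[\wt S_0]$ (alternatively, test on all of $\ran(I+\wt B_0)^{1/2}$, which yields the equivalent condition that $W$ is isometric, as in the paper).
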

\begin{proof}
The operators $\wt B_0$ and $\wt B_1$ are selfadjoint contractions
in $\sH$ and Lemma \ref{l} shows that (cf. \eqref{cinpr})
\begin{equation}\label{S12}
\wt S_k[(I+\wt B_k)^{1/2}f]+||(I+\wt
B_k)^{1/2}f||^2=2||f||^2,\;f\in\cran(I+\wt B_k),\; k=0,1.
\end{equation}
By Proposition~\ref{douglas} the inequality $I+\wt B_0\le I+\wt B_1$
is equivalent to the existence of a contraction $W:\cran(I+\wt
B_0)\to \cran(I+\wt B_1)$ ($\ker W=\{0\}$ in $\cD[\wt S_0]$) such
that
\begin{equation}\label{w}
 (I+\wt B_0)^{1/2}=(I+\wt B_1)^{1/2}W,
\end{equation}
in fact, $W$ is given by
\begin{equation}\label{opw}
 W=(I+\wt B_1)^{(-1/2)}(I+\wt B_0)^{1/2}:\cran(I+\wt B_0)\to \cran(I+\wt
 B_1).
\end{equation}
The identity \eqref{w} implies that
\begin{equation}\label{w2}
I+\wt B_0=(I+\wt B_1)^{1/2}WW^*(I+\wt B_1)^{1/2},\quad \wt B_1-\wt
B_0=(I+\wt B_1)^{1/2}(I-WW^*)(I+\wt B_1)^{1/2}.
\end{equation}
In particular,
\begin{equation}\label{w3}
 \ran (I+\wt B_0)^{1/2}=(I+\wt B_1)^{1/2}\ran W,\quad
\ran (\wt B_1-\wt B_0)^{1/2}=(I+\wt B_1)^{1/2}\ran D_{W^*},
\end{equation}
where $D_{W^*}=(I-WW^*)^{1/2}$.

(i) $\Rightarrow$ (iii) Suppose that the form
$\wt{S}_0[\cdot,\cdot]$ is a closed restriction of the form
$\wt{S}_1[\cdot,\cdot]$. Then it follows from \eqref{S12} that
$\|Wf\|^2=\|f\|^2$, i.e., $W$ is isometric and consequently
$\Pi:=WW^*$ appearing in \eqref{w2} is the orthogonal projection
onto the closed subspace $\ran W\subset\cran(I+\wt B_1)$.

(iii) $\Rightarrow$ (ii) It is clear from \eqref{w2} that $\cD[\wt
S_0]=\ran(I+\wt B_0)^{1/2}\subset \ran (I+\wt B_1)^{1/2}=\cD[\wt
S_1]$; cf. \eqref{w3}. Now suppose that $v\in\cD[\wt
S_1]\ominus_{\wt S_1}\cD[\wt S_0]$, i.e., that $\wt
S_1[u,v]+(u,v)=0$ for all $u\in\cD[\wt S_0]$; see \eqref{inpr}. By
Lemma~\ref{l} and \eqref{cinpr} this can be rewritten as
\[
\left((I+\wt B_1)^{(-1/2)}(I+\wt B_0)^{1/2}h,(I+\wt
B_1)^{(-1/2)}v\right)=0,\;h\in \sH,
\]
which in view of \eqref{opw} is equivalent to $W^*(I+\wt
B_1)^{(-1/2)}v=0$. This shows that
\begin{equation}
\label{dorth}
 \cD[\wt S_1] \ominus_{\wt S_1}\ \cD[\wt S_0]=(I+\wt B_1)^{1/2}\ker W^*.
\end{equation}
On the other hand, the identity in (iii) implies that
\begin{equation}
\label{parsum2}
 \wt B_1-\wt B_0=(I+\wt B_1)^{1/2}P(I+\wt B_1)^{1/2},
\end{equation}
where $P$ is the orthogonal projection from $\cran(I+\wt B_1)$ onto
$\cran(I+\wt B_1)\ominus\ran W= \ker W^*$, where $W^*$ acts on
$\cran(I+\wt B_1)$. Therefore,
\begin{equation}
\label{parsum3}
 \ran(\wt B_1-\wt B_0)^{1/2}=(I+\wt B_1)^{1/2}\ker W^*
 =\cD[\wt S_1] \ominus_{\wt S_1}\ \cD[\wt S_0].
\end{equation}

(ii) $\Rightarrow$ (i) Suppose that $\cD[\wt S_1] \ominus_{\wt S_1}\
\cD[\wt S_0]=\ran(\wt B_1-\wt B_0)^{1/2}.$ According to \eqref{w3}
one has $\ran (\wt B_1-\wt B_0)^{1/2}=(I+\wt B_1)^{1/2}\ran D_{W^*}$
which combined with \eqref{dorth} leads to
\begin{equation}
\label{iso} \ran D_{W^*}=\ker W^*.
\end{equation}
By the commutation relation $W^*D_{W^*}=D_W W^*$ the identity
\eqref{iso} gives $D_W W^*=0$ and this implies that the restriction
$W\uphar\cran W^* $ is isometric. However, $\cran W^*=\cran(I+\wt
B_1)^{1/2}$ and, thus, $W$ is isometric on $\cran(I+\wt B_1)^{1/2}$.
Now \eqref{w} and \eqref{S12} imply that $\wt S_1[u]=\wt S_0[u]$ for
all $u\in\cD[\wt S_0]=\ran (I+\wt B_0)^{1/2},$ i.e. the form
$\wt{S}_0[\cdot,\cdot]$ is a closed restriction of the form
       $\wt{S}_1[\cdot,\cdot]$.

Finally, the equivalence of (iii) and (iv) is obtained directly from
\eqref{Zproj}.
\end{proof}

Observe that if the equivalent conditions in Proposition~\ref{new0}
are satisfied, then it follows from \eqref{cinpr} and
\eqref{parsum2} that
\begin{equation}\label{norm}
 \left\|(\wt B_1-\wt B_0)^{1/2}g\right\|^2_{\wt S_1}
 =2||Pg||^2, \quad g\in \cran(I+\wt B_1).
\end{equation}

The next theorem will play an important role in the considerations
that follow; for this purpose we first state and prove the following
further result.

\begin{lemma}\label{newlem}
Let $\wt S_0$ and $\wt S_1$ be two nonnegative selfadjoint linear
relations such that $\wt S_1\le \wt S_0$ and let their Cayley
transforms $\wt B_0$ and $\wt B_1$ be connected by $(I+\wt
B_0)^{1/2}=(I+\wt B_1)^{1/2}W$, where $W$ is as defined in
\eqref{opw}. Then the associated forms satisfy the approximation
property
\begin{equation}\label{appr1}
 \inf\left\{\wt S_1[u-\f],\;\f\in\cD[\wt S_0]\right\}=0
\quad\mbox{for all}\quad u\in\cD[\wt S_1]
\end{equation}
if and only if
\begin{equation}\label{appr2}
 \ran(I-\wt B_1)^{1/2}\cap(I+\wt B_1)^{1/2}\ker W^*=\{0\},
\end{equation}
or, equivalently, $\ran\wt S_1^{1/2}=\cD[\wt{S}^{-1}_1]$ satisfies
\begin{equation}
\label{dorth2}
 \cD[\wt{S}^{-1}_1] \cap \left(\cD[\wt S_1] \ominus_{\wt S_1}\ \cD[\wt S_0]\right)=\{0\}.
\end{equation}
\end{lemma}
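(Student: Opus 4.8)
The plan is to transport the whole question into the Cayley transform picture, where the approximation property \eqref{appr1} and the two intersection conditions all become explicit statements about the commuting nonnegative operators $I\pm\wt B_1$ restricted to $\sH_1:=\cran(I+\wt B_1)$. Since $(I-\wt B_1)^{1/2}$ and $(I+\wt B_1)^{1/2}$ commute with the spectral projection of $\wt B_1$ onto $\ker(I+\wt B_1)$, they both leave $\sH_1$ invariant, $(I+\wt B_1)^{1/2}\uphar\sH_1$ is injective, and $(I-\wt B_1)^{1/2}\ker(I+\wt B_1)=\ker(I+\wt B_1)\perp\sH_1$. Also $W\colon\cran(I+\wt B_0)\to\sH_1$ from \eqref{opw} has $\ker W^*=\sH_1\ominus\cran W\subset\sH_1$.

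First I would reformulate \eqref{appr1}. Writing $u\in\cD[\wt S_1]=\ran(I+\wt B_1)^{1/2}$ as $u=(I+\wt B_1)^{1/2}f$ with $f\in\sH_1$, and, by \eqref{w}, each $\varphi\in\cD[\wt S_0]=\ran(I+\wt B_0)^{1/2}$ as $\varphi=(I+\wt B_1)^{1/2}w$ with $w\in\ran W$, one has $u-\varphi=(I+\wt B_1)^{1/2}(f-w)$ with $f-w\in\sH_1$, so \eqref{S12} gives $\wt S_1[u-\varphi]=\|(I-\wt B_1)^{1/2}(f-w)\|^2$. Hence \eqref{appr1} holds iff $(I-\wt B_1)^{1/2}f$ is a limit of elements of $(I-\wt B_1)^{1/2}\ran W$ for every $f\in\sH_1$, i.e. iff $(I-\wt B_1)^{1/2}\ran W$ is dense in the range of $(I-\wt B_1)^{1/2}\uphar\sH_1$. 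Taking orthogonal complements in $\sH_1$ (using selfadjointness of $(I-\wt B_1)^{1/2}\uphar\sH_1$ and $(\ran W)^\perp\cap\sH_1=\ker W^*$), this is equivalent to the condition
\[
 (\star)\qquad (I-\wt B_1)^{1/2}\cran(I+\wt B_1)\cap\ker W^*=\{0\}.
\]

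Next I would observe that \eqref{appr2} and \eqref{dorth2} are literally the same statement: by Lemma~\ref{l}, $\cD[\wt S_1^{-1}]=\ran(I-\wt B_1)^{1/2}=\ran\wt S_1^{1/2}$, and by \eqref{dorth} (whose derivation uses only $\wt S_1\le\wt S_0$), $\cD[\wt S_1]\ominus_{\wt S_1}\cD[\wt S_0]=(I+\wt B_1)^{1/2}\ker W^*$. It then remains to prove $(\star)\Leftrightarrow\eqref{appr2}$. Put $a:=(I-\wt B_1)^{1/2}\uphar\sH_1$ and $b:=(I+\wt B_1)^{1/2}\uphar\sH_1$; these are commuting nonnegative operators on $\sH_1$ with $a^2+b^2=2I$ and $\ker b=\{0\}$. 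Since $(I+\wt B_1)^{1/2}\ker W^*\subset\sH_1$ while $(I-\wt B_1)^{1/2}\ker(I+\wt B_1)\perp\sH_1$, one has $\ran(I-\wt B_1)^{1/2}\cap(I+\wt B_1)^{1/2}\ker W^*=\ran a\cap b\ker W^*$, so \eqref{appr2} reads $\ran a\cap b\ker W^*=\{0\}$ and $(\star)$ reads $\ran a\cap\ker W^*=\{0\}$. If $v=ac\in\ran a\cap\ker W^*$ ($c\in\sH_1$), then $bv=a(bc)\in\ran a\cap b\ker W^*$, so \eqref{appr2} and injectivity of $b$ force $v=0$; thus $\eqref{appr2}\Rightarrow(\star)$. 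Conversely, assuming $(\star)$, let $y=ac=b\ell\in\ran a\cap b\ker W^*$ with $c\in\sH_1$, $\ell\in\ker W^*$; applying $a$ and using $a^2=2I-b^2$ yields $2c=b(bc+a\ell)$, so $c=bd$ for some $d\in\sH_1$, whence $b(ad)=ac=b\ell$, and injectivity of $b$ gives $\ell=ad\in\ran a$, so $(\star)$ forces $\ell=0$ and $y=0$; thus $(\star)\Rightarrow\eqref{appr2}$.

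Chaining these gives \eqref{appr1} $\Leftrightarrow$ $(\star)$ $\Leftrightarrow$ \eqref{appr2} $\Leftrightarrow$ \eqref{dorth2}. I expect the last equivalence to be the real obstacle: the natural reformulation of the approximation property produces the ``untwisted'' condition $(\star)$, and matching it with the ``twisted'' condition \eqref{appr2} requires absorbing the extra factor $(I+\wt B_1)^{1/2}$, which works precisely because of the algebraic identity $(I-\wt B_1)+(I+\wt B_1)=2I$ together with injectivity of $(I+\wt B_1)^{1/2}$ on $\cran(I+\wt B_1)$.
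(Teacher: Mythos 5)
Your proposal is correct and follows essentially the same route as the paper's proof: both use Lemma~\ref{l}/\eqref{S12} to turn \eqref{appr1} into a density statement for $(I-\wt B_1)^{1/2}\ran W$, pass by orthogonal complementation to the intermediate condition $\ran(I-\wt B_1)^{1/2}\cap\ker W^*=\{0\}$ (your $(\star)$), and then exchange it with \eqref{appr2} via commutativity, the identity $(I-\wt B_1)+(I+\wt B_1)=2I$ and injectivity of $(I+\wt B_1)^{1/2}$ on $\cran(I+\wt B_1)$, finally identifying \eqref{appr2} with \eqref{dorth2} through Lemma~\ref{l} and \eqref{dorth}. The only difference is presentational: the paper argues through $\cran(I-\wt B_1^2)^{1/2}$ and the range identity $\ran(I-\wt B_1)^{1/2}\cap\ran(I+\wt B_1)^{1/2}=\ran(I-\wt B_1^2)^{1/2}$, which you re-derive inline with the operators $a,b$ on $\cran(I+\wt B_1)$.
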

\begin{proof}
First assume that \eqref{appr1} is satisfied. By means of
Lemma~\ref{l} this condition can be rewritten as follows
\[
\inf\left\{\left\|(I-\wt B_1)^{1/2}f-(I-\wt
B_1)^{1/2}Wg\right\|^2,\;g\in\cran{(I+\wt B_0)}\right\}=0
\]
for all $f\in\cran(I+\wt B_1)^{1/2}$. Since $\ran(I-\wt
B^2_1)^{1/2}=\ran(I-\wt B_1)^{1/2}\cap\ran(I+\wt B_1)^{1/2}$ and
moreover $\cran(I-\wt B^2_1)^{1/2}=(\ker(I-\wt
B^2_1)^{1/2})^\perp=\cran(I-\wt B_1)^{1/2}\cap\cran(I+\wt
B_1)^{1/2}$ the previous condition is equivalent to
\[
\inf\left\{\left\|h-(I-\wt B_1)^{1/2}Wg\right\|^2,\;g\in\cran{(I+\wt
B_0)}\right\}=0\,
 \mbox{ for all }\, h\in\cran(I-\wt B^2_1)^{1/2}.
\]
This means that the orthogonal complement $\Omega^\perp$ in
$\cran(I-\wt B^2_1)^{1/2}$ of the linear manifold
\[
 \Omega:=\left\{(I-\wt B_1)^{1/2}Wg,\;g\in\cran{(I+\wt
B_0)}\right\}
\]
is equal to zero. However,
\[
\Omega^\perp=\left\{\f\in \cran(I-\wt B^2_1)^{1/2}:(I-\wt
B_1)^{1/2}\f\in\ker W^*\right\}
\]
and since $\ker W^*\subset\cran(I+\wt B_1)$ one concludes that the
condition $\Omega^\perp=\{0\}$ is equivalent to $\ran(I-\wt
B_1)^{1/2}\cap\ker W^*=\{0\}$. It remains to prove that this last
condition is equivalent to the condition in \eqref{appr2}. To see
this first assume that $\ran(I-\wt B_1)^{1/2}\cap\ker W^*=\{0\}$ and
let $g\in \ran(I-\wt B_1)^{1/2}\cap(I+\wt B_1)^{1/2}\ker W^*$. Then
$g\in\ran (I-\wt B^2_1)^{1/2}$ and hence $g=(I-\wt
B^2_1)^{1/2}u=(I+\wt B_1)^{1/2}w$ for some $u\in\cran (I-\wt B^2_1)$
and $w\in\ker W^*\subset\cran(I+\wt B_1)$. This implies that
\[
 (I+\wt B_1)^{1/2}[(I-\wt B_1)^{1/2}u-w]=0
\]
and since $(I-\wt B_1)^{1/2}u-w\in\cran(I+\wt B_1)$ one concludes
that $(I-\wt B_1)^{1/2}u=w$, which by the assumption $\ran(I-\wt
B_1)^{1/2}\cap\ker W^*=\{0\}$ implies that $(I-\wt B_1)^{1/2}u=w=0$.
Therefore, also $g=0$ and thus \eqref{appr2} follows. To prove the
converse assume that \eqref{appr2} is satisfied and suppose that
$g\in \ran(I-\wt B_1)^{1/2}\cap\ker W^*$. Then $g\in \cran(I+\wt
B_1)$ and clearly $(I+\wt B_1)^{1/2}g\in \ran(I-\wt
B_1)^{1/2}\cap(I+\wt B_1)^{1/2}\ker W^*$ from which one concludes
that $(I+\wt B_1)^{1/2}g=0$ and hence also $g=0$. This proves that
\eqref{appr1} and \eqref{appr2} are equivalent.

The equivalence of \eqref{appr2} and \eqref{dorth2} is obtained by
using Lemma~\ref{l}, which shows that $\ran(I-\wt
B)^{1/2}=\cD[\wt{S}^{-1}]$, and the formula $\cD[\wt S_1]
\ominus_{\wt S_1}\ \cD[\wt S_0]=(I+\wt B_1)^{1/2}\ker W^*$ in
\eqref{dorth}.
\end{proof}

\begin{theorem}\label{new} Let $\wt S_0$ and $\wt S_1$ be two nonnegative
selfadjoint relations. Then the following conditions are equivalent:
\begin{enumerate}
\def\labelenumi{\rm (\roman{enumi})}
\item the form $\wt{S}_0[\cdot,\cdot]$ is a closed restriction of the form
       $\wt{S}_1[\cdot,\cdot]$ and
\[
\inf\left\{\wt S_1[u-\f],\;\f\in\cD[\wt S_0]\right\}=0
\quad\mbox{for all}\quad u\in\cD[\wt S_1];
\]
\item the form  $\wt{S}^{-1}_1[\cdot,\cdot]$ is a closed restriction of the form
       $\wt{S}^{-1}_0[\cdot,\cdot]$ and
\[
\inf\left\{\wt S^{-1}_0[v-\psi],\;\psi\in\cD[\wt
S^{-1}_1]\right\}=0\quad\mbox{for all}\quad v\in \cD[\wt S^{-1}_0];
\]
\item the form  $\wt{S}_0[\cdot,\cdot]$ is a closed restriction of the form
       $\wt{S}_1[\cdot,\cdot]$ and the form  $\wt{S}^{-1}_1[\cdot,\cdot]$ is a closed restriction of the form
       $\wt{S}^{-1}_0[\cdot,\cdot]$;
\item the Cayley transforms
\[
\graph\wt B_k=\cC(\wt S_k)=\left\{\{f+f',f-f'\},\;\{f,f'\}\in\wt
S_k\right\},\;k=0,1
\]
 satisfy the conditions
\[
\ran(I+\wt B_0)^{1/2}\cap\ran(\wt B_1-\wt B_0)^{1/2}
 =\ran(I-\wt B_1)^{1/2}\cap\ran(\wt B_1-\wt B_0)^{1/2}=\{0\}.
\]
\end{enumerate}
\end{theorem}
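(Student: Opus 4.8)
The plan is to translate everything into the Cayley transforms $\wt B_k=\cC(\wt S_k)$, $k=0,1$, and then lean on Proposition~\ref{new0} and Lemma~\ref{newlem}. I would first record two structural facts. (a) Each of (i)--(iii) forces $\wt S_1\le\wt S_0$, equivalently $\wt B_0\le\wt B_1$: the form inclusion $\wt S_0[\cdot,\cdot]\subset\wt S_1[\cdot,\cdot]$ gives $\wt S_1\le\wt S_0$ by \eqref{ineq0}, and likewise $\wt S_1^{-1}[\cdot,\cdot]\subset\wt S_0^{-1}[\cdot,\cdot]$ gives $\wt S_0^{-1}\le\wt S_1^{-1}$, hence again $\wt S_1\le\wt S_0$; in (iv) the mere writing of $\ran(\wt B_1-\wt B_0)^{1/2}$ presupposes $\wt B_0\le\wt B_1$. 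So throughout we may and do assume $\wt B_0\le\wt B_1$, which places us in the setting of Proposition~\ref{new0} and Lemma~\ref{newlem} and makes available the contraction $W$ of \eqref{opw}. (b) A direct computation from \eqref{Cayley} with graphs gives $\cC(\wt S^{-1})=-\cC(\wt S)$ for every nonnegative selfadjoint relation $\wt S$; hence applying any of our assertions to the pair $(\wt S_1^{-1},\wt S_0^{-1})$ amounts to replacing $(\wt B_0,\wt B_1)$ by $(-\wt B_1,-\wt B_0)$, which interchanges $\ran(I+\wt B_0)^{1/2}$ with $\ran(I-\wt B_1)^{1/2}$ and leaves $\ran(\wt B_1-\wt B_0)^{1/2}$ unchanged. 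This duality will give (ii) essentially for free. I would then prove (iii) $\Leftrightarrow$ (iv), then (i) $\Leftrightarrow$ (iii), and finally (ii) $\Leftrightarrow$ (iii).

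For (iii) $\Leftrightarrow$ (iv): by the equivalence (i) $\Leftrightarrow$ (iv) in Proposition~\ref{new0}, applied to $(\wt S_0,\wt S_1)$, the assertion that $\wt S_0[\cdot,\cdot]$ is a closed restriction of $\wt S_1[\cdot,\cdot]$ is equivalent to $\ran(I+\wt B_0)^{1/2}\cap\ran(\wt B_1-\wt B_0)^{1/2}=\{0\}$; applying the same equivalence to $(\wt S_1^{-1},\wt S_0^{-1})$ and invoking (b), the assertion that $\wt S_1^{-1}[\cdot,\cdot]$ is a closed restriction of $\wt S_0^{-1}[\cdot,\cdot]$ is equivalent to $\ran(I-\wt B_1)^{1/2}\cap\ran(\wt B_1-\wt B_0)^{1/2}=\{0\}$. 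Conjoining these two is precisely (iv).

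For (i) $\Leftrightarrow$ (iii): both (i) and (iii) contain the sub-condition that $\wt S_0[\cdot,\cdot]$ is a closed restriction of $\wt S_1[\cdot,\cdot]$, so it is enough to show that, once this holds, the approximation property $\inf\{\wt S_1[u-\f]:\f\in\cD[\wt S_0]\}=0$ ($u\in\cD[\wt S_1]$) is equivalent to $\wt S_1^{-1}[\cdot,\cdot]$ being a closed restriction of $\wt S_0^{-1}[\cdot,\cdot]$. Under the form inclusion, the equivalence (i) $\Leftrightarrow$ (ii) in Proposition~\ref{new0} together with \eqref{dorth} identifies
\[
 \cD[\wt S_1]\ominus_{\wt S_1}\cD[\wt S_0]=(I+\wt B_1)^{1/2}\ker W^*=\ran(\wt B_1-\wt B_0)^{1/2}.
\]
Now Lemma~\ref{newlem} (the equivalence \eqref{appr1} $\Leftrightarrow$ \eqref{dorth2}), combined with $\cD[\wt S_1^{-1}]=\ran(I-\wt B_1)^{1/2}$ from Lemma~\ref{l}, shows the approximation property to be equivalent to $\ran(I-\wt B_1)^{1/2}\cap\ran(\wt B_1-\wt B_0)^{1/2}=\{0\}$, which by the computation in the previous paragraph is equivalent to $\wt S_1^{-1}[\cdot,\cdot]$ being a closed restriction of $\wt S_0^{-1}[\cdot,\cdot]$. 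Hence (i) $\Leftrightarrow$ (iii).

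Finally (ii) $\Leftrightarrow$ (iii) follows by applying the already proven equivalence (i) $\Leftrightarrow$ (iii) to the pair $(\wt S_1^{-1},\wt S_0^{-1})$: its instance of (i) is exactly (ii) for $(\wt S_0,\wt S_1)$, while its instance of (iii) is exactly (iii) for $(\wt S_0,\wt S_1)$ (with the two form inclusions written in the reverse order, and using $(\wt S_k^{-1})^{-1}=\wt S_k$). The main point requiring care will be the bookkeeping: keeping the standing inequality $\wt B_0\le\wt B_1$ in force at every invocation of Proposition~\ref{new0} and Lemma~\ref{newlem}, and tracking which range intersection corresponds to which form statement under the duality $\cC(\wt S^{-1})=-\cC(\wt S)$. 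Modulo that, the theorem reduces to combining the two preceding results.
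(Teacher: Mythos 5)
Your argument is correct and follows essentially the same route as the paper's proof: both reduce everything to the Cayley transforms, use Proposition~\ref{new0} to translate the closed-restriction statements into the range conditions $\ran(I+\wt B_0)^{1/2}\cap\ran(\wt B_1-\wt B_0)^{1/2}=\{0\}$ and (via inverses, i.e.\ $\cC(\wt S^{-1})=-\cC(\wt S)$) $\ran(I-\wt B_1)^{1/2}\cap\ran(\wt B_1-\wt B_0)^{1/2}=\{0\}$, and use Lemma~\ref{newlem} together with \eqref{parsum3} to identify the approximation property with the second of these. The only differences are organizational (you chain (iii)$\Leftrightarrow$(iv), (i)$\Leftrightarrow$(iii), then (ii) by duality, whereas the paper routes all implications through (iv)) and your explicit remark that (i)--(iii) force $\wt S_1\le\wt S_0$, which the paper leaves implicit.
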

\begin{proof}
By Proposition \ref{new0} the statement that the form
$\wt{S}_0[\cdot,\cdot]$ is a closed restriction of the form
$\wt{S}_1[\cdot,\cdot]$ is equivalent to the equality $\ran(I+\wt
B_0)^{1/2}\cap\ran(\wt B_1-\wt B_0)^{1/2}=\{0\}.$ Similarly by
applying inverses, cf. \eqref{Cayley2}, it can be seen that the
statement that the form $\wt{S}^{-1}_1[\cdot,\cdot]$ is a closed
restriction of the form $\wt{S}^{-1}_0[\cdot,\cdot]$ is equivalent
to the equality
$$\ran(I-\wt B_1)^{1/2}\cap\ran(\wt B_1-\wt B_0)^{1/2}=\{0\}.$$
This proves the equivalence (iii) $\Leftrightarrow$ (iv).

(i) $\Rightarrow$ (iv) By Lemma \ref{newlem} the condition
$\inf\left\{\wt S_1[u-\f],\;\f\in\cD[\wt S_0]\right\}=0$ for all
$u\in\cD[\wt S_1]$ is equivalent to \eqref{appr2}. On the other
hand, since $\wt{S}_0[\cdot,\cdot]$ is a closed restriction of the
form $\wt{S}_1[\cdot,\cdot]$ it follows from Proposition \ref{new0}
that
\begin{equation}\label{B1W}
 \ran(\wt B_1-\wt B_0)^{1/2}=(I+\wt B_1)^{1/2}\ker W^*,
\end{equation}
see \eqref{parsum3}. Combining \eqref{B1W} with \eqref{appr2} gives
$\ran(I-\wt B_1)^{1/2}\cap\ran(\wt B_1-\wt B_0)^{1/2}=\{0\}$.

(ii) $\Rightarrow$ (iv) The proof is similar to the proof of the
previous implication (apply inverses).

(iv) $\Rightarrow$ (i), (ii) Assume that (iv) holds. Again it
follows from Proposition \ref{new0} that \eqref{B1W} is satisfied.
This means that the second condition in (iv) coincides with the
condition \eqref{appr2} in Lemma~\ref{newlem} and, therefore, the
approximation property \eqref{appr1} in (i) is satisfied. Moreover,
by Proposition \ref{new0} the first property in (i) is equivalent to
the first condition in (iv). Hence (iv) implies (i) and likewise one
can derive (ii) from (iv).
\end{proof}

\begin{remark}
\label{novrema} If $\wt B_0$ and $\wt B_1$ ($\wt B_0\le \wt B_1$)
are $sc$-extensions of a Hermitian contraction, then it follows from
\eqref{novrav} that  the condition
\[
\ran(I+\wt B_0)^{1/2}\cap\ran(\wt B_1-\wt B_0)^{1/2}=\{0\}
\]
is equivalent to
\[
\ran(\wt B_0-B_\mu)^{1/2}\cap\ran(\wt B_1-\wt B_0)^{1/2}=\{0\},
\]
and, similarly,
\[
 \ran(I-\wt B_1)^{1/2}\cap\ran(\wt B_1-\wt B_0)^{1/2}=\{0\}
\]
is equivalent to
\[
 \ran(B_M-\wt B_1)^{1/2}\cap\ran(\wt B_1-\wt B_0)^{1/2}=\{0\}.
\]
\end{remark}
\begin{remark}
\label{paralslozh} If $F$ and $G$ are bounded nonnegative
selfadjoint operators, then the parallel sum $F:G$ can be
 defined \cite{FW}. The conditions
$F:G=0$ and $\ran F^{1/2}\cap\ran G^{1/2}=\{0\}$ are equivalent.
\end{remark}

The following theorem has been established in \cite{AHS}.
\begin{theorem}
\label{Tahs}
Let $S$ be a nonnegative symmetric linear relation. The pair $\{\wt
S_0,\wt S_1\}$ of  nonnegative selfadjoint linear relations
satisfies the conditions
\begin{equation}
\label{nonnegpair} \left\{\begin{array}{l}  \wt{S}_0\cap \wt{S}_1=
{S},\\  \mbox{\textit{the sesquilinear form }} \wt{S}_0[\cdot,\cdot]
  \mbox{\textit{ is a closed restriction of the form }}       \wt{S}_1[\cdot,\cdot],\\
  \mbox{\textit{the sesquilinear form }} \wt{S}^{-1}_1[\cdot,\cdot]       \mbox{\textit{ is a closed restriction of the form }}\wt{S}^{-1}_0[\cdot,\cdot]
\end{array}\right.
\end{equation}
if and only if the pair $\{\wt B_0,\wt B_1\}$ of selfadjoint
contractions satisfies conditions
\begin{equation}
\label{MAIN}
\begin{array}{l}
\wt B_0\le \wt B_1,\;\ker (\wt B_1-\wt B_0)=\dom B,\\
 \ran(\wt B_1-\wt B_0^{1/2})\cap\ran(\wt B_0-B_\mu)^{1/2}=\ran(\wt B_1-\wt B_0)^{1/2}\cap\ran(B_M-\wt B_1)^{1/2}=\{0\},
 \end{array}
\end{equation}
 where $ B=\cC( S),$ $\wt B_k=\cC(\wt S_k)$, $k=0,1$.
\end{theorem}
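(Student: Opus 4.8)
The plan is to transport both condition systems \eqref{nonnegpair} and \eqref{MAIN} through the Cayley transform and then read off the equivalence from Theorem~\ref{new} and Remark~\ref{novrema}; essentially nothing new has to be computed, only assembled. First I would fix notation: put $B=\cC(S)$, a Hermitian contraction with $\dom B=\{f+f':\{f,f'\}\in S\}$ by \eqref{Cayley}, and set $\sN=\sH\ominus\dom B$ (the defect subspace entering \eqref{novrav}). Recall that $\wt S_k\mapsto\wt B_k=\cC(\wt S_k)$ is an order-reversing bijection between nonnegative selfadjoint relations and selfadjoint contractions ($\wt B_0\le\wt B_1\Leftrightarrow\wt S_1\le\wt S_0$), under which the nonnegative selfadjoint extensions of $S$ correspond precisely to the $sc$-extensions of $B$, i.e.\ to the operators of the interval $[B_\mu,B_M]$. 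I would begin by noting that each of \eqref{nonnegpair} and \eqref{MAIN} already forces $\wt B_0,\wt B_1\in[B_\mu,B_M]$: the first line of \eqref{nonnegpair} gives $S\subset\wt S_k$; and in \eqref{MAIN} the mere presence of $(\wt B_0-B_\mu)^{1/2}$ and $(B_M-\wt B_1)^{1/2}$ requires $B_\mu\le\wt B_0$ and $\wt B_1\le B_M$, which with $\wt B_0\le\wt B_1$ gives the claim. Hence in either direction I may assume throughout that $\wt B_0$ and $\wt B_1$ are $sc$-extensions of $B$; in particular both restrict to $B$ on $\dom B$, so $\dom B\subset\ker(\wt B_1-\wt B_0)$ and therefore $\ran(\wt B_1-\wt B_0)^{1/2}\subset\cran(\wt B_1-\wt B_0)=(\ker(\wt B_1-\wt B_0))^{\perp}\subset\sN$.

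Step two treats the intersection condition. Using $\{f,f'\}\in\wt S_k\Leftrightarrow\wt B_k(f+f')=f-f'$ (valid since a selfadjoint contraction is everywhere defined), one sees that $\{f,f'\}\in\wt S_0\cap\wt S_1$ exactly when $f+f'\in\ker(\wt B_1-\wt B_0)$ and $\wt B_0(f+f')=f-f'$. From this I would read off ``$\wt S_0\cap\wt S_1=S$'' $\Leftrightarrow$ ``$\ker(\wt B_1-\wt B_0)=\dom B$'': if $\ker(\wt B_1-\wt B_0)=\dom B$ then on this subspace $\wt B_0=B$, so the displayed characterization becomes $\{f+f',f-f'\}\in\graph B$, i.e.\ $\{f,f'\}\in\cC(B)=S$; conversely, given $\wt S_0\cap\wt S_1=S$, every $h\in\ker(\wt B_1-\wt B_0)$ produces $\{\half(h+\wt B_0h),\half(h-\wt B_0h)\}\in\wt S_0\cap\wt S_1=S$, whence $h\in\dom B$, and combined with $\dom B\subset\ker(\wt B_1-\wt B_0)$ this gives the equality. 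So the first lines of \eqref{nonnegpair} and \eqref{MAIN} correspond.

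Step three treats the two closed-restriction conditions. Here I would simply invoke Theorem~\ref{new}: the conjunction of ``$\wt S_0[\cdot,\cdot]$ is a closed restriction of $\wt S_1[\cdot,\cdot]$'' and ``$\wt S_1^{-1}[\cdot,\cdot]$ is a closed restriction of $\wt S_0^{-1}[\cdot,\cdot]$'' is precisely its condition (iii), equivalent to condition (iv), that is, to
\[
 \ran(I+\wt B_0)^{1/2}\cap\ran(\wt B_1-\wt B_0)^{1/2}=\ran(I-\wt B_1)^{1/2}\cap\ran(\wt B_1-\wt B_0)^{1/2}=\{0\};
\]
note that this carries $\wt B_0\le\wt B_1$ automatically, since a form inclusion forces the reverse form inequality by \eqref{ineq0}. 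Finally, because $\wt B_0,\wt B_1$ are $sc$-extensions of $B$ and $\ran(\wt B_1-\wt B_0)^{1/2}\subset\sN$, Remark~\ref{novrema} --- which is just \eqref{novrav} after intersecting with $\sN$ --- rewrites these two trivial-intersection conditions as
\[
 \ran(\wt B_0-B_\mu)^{1/2}\cap\ran(\wt B_1-\wt B_0)^{1/2}=\ran(B_M-\wt B_1)^{1/2}\cap\ran(\wt B_1-\wt B_0)^{1/2}=\{0\},
\]
which is exactly the second line of \eqref{MAIN}.

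Combining the three steps: \eqref{nonnegpair} holds if and only if $\wt B_0\le\wt B_1$, $\ker(\wt B_1-\wt B_0)=\dom B$, and the two $B_\mu,B_M$-range conditions hold, that is, if and only if \eqref{MAIN} holds. I do not expect a genuine obstacle --- the mathematical content is entirely carried by Theorem~\ref{new} and Remark~\ref{novrema}. The points needing attention are of bookkeeping type: one must extract the ``$sc$-extension'' status of $\wt B_0,\wt B_1$ from each side before those results apply, verify $\ran(\wt B_1-\wt B_0)^{1/2}\subset\sN$ so that Remark~\ref{novrema} is legitimate (immediate, as every $sc$-extension of $B$ agrees with $B$ on $\dom B$), and keep in mind that Proposition~\ref{new0} and Theorem~\ref{new} are implicitly used under $\wt B_0\le\wt B_1$, so that inequality has to be secured first, as done above.
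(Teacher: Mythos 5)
Your proposal is correct. Note, however, that the paper itself gives no proof of Theorem~\ref{Tahs}: it is quoted from \cite{AHS}, so there is no internal argument to compare with line by line. What you do is essentially the assembly that the surrounding text invites: Theorem~\ref{new} (iii)$\Leftrightarrow$(iv) handles the two closed-restriction conditions, and Remark~\ref{novrema} (i.e.\ \eqref{novrav} intersected with $\sN$, legitimate because $\ran(\wt B_1-\wt B_0)^{1/2}\subset\sN$, as you check) converts the $I+\wt B_0$, $I-\wt B_1$ range conditions into the $B_\mu$, $B_M$ ones. The two pieces you had to supply yourself are exactly the right ones and are handled correctly: (a) the translation $\wt S_0\cap\wt S_1=S\Leftrightarrow\ker(\wt B_1-\wt B_0)=\dom B$, which works because the Cayley transform is a graph bijection and both $\wt B_k$ restrict to $B$ on $\dom B$; and (b) extracting the $sc$-extension property on each side before Remark~\ref{novrema} can be used --- from \eqref{nonnegpair} via $S\subset\wt S_k$, and from \eqref{MAIN} via the implicit constraints $B_\mu\le\wt B_0\le\wt B_1\le B_M$ together with Kre\u\i n's description of the operator interval (the equivalence (i)$\Leftrightarrow$(ii) at the beginning of Section~3, which shows any selfadjoint $C$ with $B_\mu\le C\le B_M$ has the form \eqref{param1} and hence extends $B$). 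It would be worth stating (b) with the Douglas-type factorization spelled out, since it is the only place where the literal reading of \eqref{MAIN} (whose displayed formula also contains the obvious misprint $(\wt B_1-\wt B_0^{1/2})$ for $(\wt B_1-\wt B_0)^{1/2}$) must be interpreted; you do flag this, and your reading is the one under which the theorem is true. Your observation that the form inclusion in \eqref{nonnegpair} automatically yields $\wt B_0\le\wt B_1$, so that Proposition~\ref{new0} and Theorem~\ref{new} apply, closes the last bookkeeping gap.
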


It is also shown in \cite{AHS} that if the defect numbers of $S$ are
finite, then the pair $\{S_{\rm{F}},S_{\rm{K}}\}$ of nonnegative
selfadjoint extensions of $S$ is the unique pair satisfying the
conditions \eqref{nonnegpair} and that if the defect numbers are
infinite, then there exist pairs $\{\wt S_0,\wt S_1\}$ different
from $\{S_{\rm{F}},S_{\rm{K}}\}$ with the properties
\eqref{nonnegpair}.


\section{Special pairs of selfadjoint extensions}

Let $B$ be a Hermitian contraction in $\sH$ with $\dom
B=\sH_0\subset \sH$ and let $\sN=\sH\ominus \sH_0$. In what follows
it is assumed that $\ker(B_M-B_\mu)=\dom B$. It is clear that the
pair $\{B_\mu, B_M\}$ determined by extreme extensions of the
operator interval $[B_\mu, B_M]$ satisfies all the conditions in
\eqref{MAIN}. According to \cite{AHS} there exists a pair $\{\wt B
_0,\wt B_1\}$, which is different from the pair $\{B_\mu, B_M\}$ and
satisfies the conditions \eqref{MAIN} if and only if $\dim
\sN=\infty$. We repeat here the construction from \cite{AHS}, since
it is essential also for the present paper.

\subsection{Construction of special pairs of nonnegative selfadjoint
contractions} Let $\sH$ be an infinite-dimensional separable Hilbert
space and let $\sK$ be an infinite-dimensional subspace of $\sH$
with an infinite-dimensional orthogonal complement $\sK^\perp$. Then
$\sK^\perp$ can be identified with $\sK$ and one can write $\sH$ as
a direct sum $\sH=\sK\oplus\sK$.

It is well known that there exist unbounded selfadjoint operators on
infinite dimensional Hilbert spaces $\sH$, whose (dense) domains
have a trivial intersection; see \cite{Neumann}, \cite{FW}, concrete
examples are given in \cite{CNZ}, \cite{Kosaki}. Consequently, there
exist bounded nonnegative operators $F$ and $G$ in $\sK$, such that
\[
 \cran F=\cran G=\sK
\quad\mbox{and}\quad
 \ran F \cap \ran G=\{0\}.
\]
Without loss of generality one can assume that $\|F\|<1$. Then $F$
is contractive and
$$\ker(I-F^2)=\{0\}.$$
Define
\[
  \cX=\begin{pmatrix} F^2 & 0 \\ 0 & I-F^2 \end{pmatrix},
\quad
  \sM=\left\{\begin{pmatrix} Gh \\ h \end{pmatrix}:\, h\in \sK \right\}.
\]
Then $\cX=\cX^*$ is a nonnegative contraction in $\sH$ with $\ker
\cX=\{0\}$ and $\sM$ is a closed linear subspace of $\sH$ such that
\[
\ran \cX^{1/2}\cap \sM=\{0\}.
\]
To see this assume that $v\in\ran \cX^{1/2}\cap\sM$. Then for some
$h,x,y\in \sK$ one has
\begin{equation}\label{01}
 v=\begin{pmatrix} Gh \\ h \end{pmatrix}
  =\begin{pmatrix} Fx \\ (I-F^2)^{1/2}y \end{pmatrix} .
\end{equation}
Since $\ran F\cap\ran G=\{0\}$, \eqref{01} implies that $Fx=Gh=0$.
Due to $\ker F=\ker G=\{0\}$ one obtains $x=0$, $h=0$. Consequently
$v=0$, and this proves the claim.

Next observe that
\[
 I-\cX=\begin{pmatrix} I-F^2 & 0 \\ 0 & F^2 \end{pmatrix},
\quad
 \sM^\perp=\left\{\begin{pmatrix} k \\ -Gk \end{pmatrix}:\, k\in \sK \right\}.
\]
Clearly, $\ker(I-\cX)=\{0\}$ and a similar argument as above shows
that
\begin{equation}\label{02}
 \ran(I-\cX)^{1/2}\cap \sM^\perp=\{0\}.
\end{equation}
Now consider
\[
\cY:=\cX+(I-\cX)^{1/2}P_\sM(I-\cX)^{1/2}.
\]
By definition $\cX\le \cY\le I$ and
\[
\cY-\cX=(I-\cX)^{1/2}P_\sM(I-\cX)^{1/2}.
\]
In particular, $\ker \cY=\{0\}$ and it follows from \eqref{Zproj}
that
\begin{equation}\label{03}
 \ran(I-\cY)^{1/2}\cap\ran (\cY-\cX)^{1/2}=\{0\}.
\end{equation}
Moreover, by factoring $\cY=\cY_0\cY_0^*$ with the row operator
$\cY_0=(\cX^{1/2}; (I-\cX)^{1/2}P_\sM)$ and using similar arguments
as in \eqref{Zproj} one concludes that
\begin{equation}\label{04}
 \ran \cX^{1/2}\cap\ran (\cY-\cX)^{1/2}=\{0\}.
\end{equation}
Notice that due to $\ker(I-\cX)=\{0\}$ the condition \eqref{02} is
equivalent to
\[
\ker (\cY-\cX)=\{0\}.
\]
 It is also worth to mention that (use e.g. \eqref{04})
\begin{equation}\label{intzeroo}
\ran \cX\cap\ran\cY=\{0\}.
\end{equation}
\subsection{Construction of special pairs of selfadjoint contractions and selfadjoint contractive extensions}
\label{spcontr1} Next introduce the selfadjoint contractions $\wt
Z_0$ and $\wt Z_1$ by
\begin{equation}\label{Zpair}
\wt Z_0:=2\cX-I,\; \wt Z_1:=2\cY-I.
\end{equation}
Then $\wt Z_0\le \wt Z_1$ and in view of \eqref{03} and \eqref{04}
one has
\[
 \ran(I-\wt Z_1)^{1/2}\cap\ran(\wt Z_1-\wt Z_0)^{1/2}=\{0\},\quad
 \ran(I+\wt Z_0)^{1/2}\cap\ran(\wt Z_1-\wt Z_0)^{1/2}=\{0\}.
\]
Additionally, by the construction one has
\[
\begin{array}{l}
 \ker(I+\wt Z_0)=\{0\},\quad \ker(I-\wt Z_0)=\{0\},\quad \ker(\wt Z_1-\wt Z_0)=\{0\},
\end{array}
\]
and hence also $\ker(I+\wt Z_1)=\{0\}$.

Now we are ready to make the construction of a pair $\{\wt B _0,\wt
B_1\}$ of contractions with the desired properties.

\begin{corollary}\label{cor4.1}
Let $B$ be a Hermitian contraction in $\sH$ with $\dom B=\sH_0$, let
$\sN=\sH\ominus\sH_0$, and assume that
\[
\dim\sN=\infty,\quad \ker (B_M-B_\mu)=\dom B=\sH_0.
\]
Then there exists a pair $\{\wt B_0, \wt B_1\}$ of $sc$-extensions
of $B$ with the properties \eqref{MAIN} which differs from the pair
$\{B_\mu, B_M\}$.
\end{corollary}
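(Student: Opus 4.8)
The plan is to transplant the pair $\{\wt Z_0,\wt Z_1\}$ of selfadjoint contractions constructed in Subsection~\ref{spcontr1} onto the defect subspace $\sN$ by means of the Kre\u\i n parametrization \eqref{param1}. Since $\sN$ is separable with $\dim\sN=\infty$, it may be identified with the space $\sK\oplus\sK$ used in that construction; moreover the hypothesis $\ker(B_M-B_\mu)=\dom B=\sH_0$ gives $\cran(B_M-B_\mu)=\sH\ominus\sH_0=\sN$, so that $\wt Z_0=2\cX-I$ and $\wt Z_1=2\cY-I$ are $sc$-operators in $\cran(B_M-B_\mu)$. Define $\wt B_0$ and $\wt B_1$ through \eqref{param1} with $\wt Z$ replaced by $\wt Z_0$ and $\wt Z_1$, respectively; by the description of the operator interval $[B_\mu,B_M]$ they are $sc$-extensions of $B$. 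Writing $D:=(B_M-B_\mu)^{1/2}$, so that $\ker D=\sH_0$ and $\cran D=\sN$, the identity \eqref{param1} gives
\[
 \wt B_0-B_\mu=\tfrac12\,D(I+\wt Z_0)D,\quad
 B_M-\wt B_1=\tfrac12\,D(I-\wt Z_1)D,\quad
 \wt B_1-\wt B_0=\tfrac12\,D(\wt Z_1-\wt Z_0)D.
\]

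Next I would verify the conditions in \eqref{MAIN}. The inequality $\wt B_0\le\wt B_1$ follows from $\wt Z_0\le\wt Z_1$ and the monotonicity of $A\mapsto DAD$. For the kernel condition, if $f\in\ker(\wt B_1-\wt B_0)$ then $(\wt Z_1-\wt Z_0)Df\in\ker D=\sH_0$, while at the same time $(\wt Z_1-\wt Z_0)Df\in\sN$, so $(\wt Z_1-\wt Z_0)Df=0$; since $\ker(\wt Z_1-\wt Z_0)=\{0\}$ and $Df\in\sN$ this gives $Df=0$, i.e. $f\in\ker D=\sH_0=\dom B$, and the converse inclusion is clear, so $\ker(\wt B_1-\wt B_0)=\dom B$. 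For the two range conditions I would use the elementary identity $\ran(CC^*)^{1/2}=\ran C$ with $C=DA^{1/2}$, where $A$ runs through the nonnegative operators $I+\wt Z_0$, $I-\wt Z_1$, $\wt Z_1-\wt Z_0$ on $\sN$; since $D^*=D$ this yields
\[
 \ran(\wt B_1-\wt B_0)^{1/2}=\ran\big(D(\wt Z_1-\wt Z_0)^{1/2}\big),\qquad
 \ran(\wt B_0-B_\mu)^{1/2}=\ran\big(D(I+\wt Z_0)^{1/2}\big),
\]
and likewise $\ran(B_M-\wt B_1)^{1/2}=\ran\big(D(I-\wt Z_1)^{1/2}\big)$. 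If $v=D(\wt Z_1-\wt Z_0)^{1/2}a=D(I+\wt Z_0)^{1/2}b$, then the vector $(\wt Z_1-\wt Z_0)^{1/2}a-(I+\wt Z_0)^{1/2}b$ lies in $\ker D=\sH_0$ and also in $\sN$, hence equals $0$, so it belongs to $\ran(\wt Z_1-\wt Z_0)^{1/2}\cap\ran(I+\wt Z_0)^{1/2}=\{0\}$ by the construction of Subsection~\ref{spcontr1}. Thus $(\wt Z_1-\wt Z_0)^{1/2}a=0$ and $v=0$; the second intersection is treated identically using $\ran(I-\wt Z_1)^{1/2}\cap\ran(\wt Z_1-\wt Z_0)^{1/2}=\{0\}$. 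This establishes \eqref{MAIN}.

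It remains to check that $\{\wt B_0,\wt B_1\}\neq\{B_\mu,B_M\}$. Since $\wt B_0\le\wt B_1$ and both extensions lie in $[B_\mu,B_M]$, equality of the pairs would force $\wt B_0=B_\mu$, i.e. $D(I+\wt Z_0)D=0$, whence $(I+\wt Z_0)^{1/2}D=0$ and $\sN=\cran D\subset\ker(I+\wt Z_0)$. As $\ker(I+\wt Z_0)=\{0\}$ while $\dim\sN=\infty$, this is impossible, so $\wt B_0\neq B_\mu$ and the pair $\{\wt B_0,\wt B_1\}$ differs from $\{B_\mu,B_M\}$; by Theorem~\ref{Tahs} the corresponding pair $\{\cC(\wt B_0),\cC(\wt B_1)\}$ of nonnegative selfadjoint extensions then satisfies \eqref{nonnegpair}.

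I expect the only mildly delicate point to be the passage from the intersection conditions for $\{\wt Z_0,\wt Z_1\}$ on $\sN$ to those for $\{\wt B_0,\wt B_1\}$ on $\sH$: here one must use that $\ker D=\sH_0$ and that the operators $\wt Z_j$ act within $\sN$, so that the relevant difference vectors stay in $\sN$ and can be annihilated by $D$ only if they vanish. Everything else is a routine combination of the construction of Subsection~\ref{spcontr1} with the parametrization \eqref{param1}.
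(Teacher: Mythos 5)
Your proof is correct and takes essentially the same route as the paper: transplant the pair $\{\wt Z_0,\wt Z_1\}$ of \eqref{Zpair} into the operator interval via the parametrization \eqref{param1}. The paper merely asserts that the resulting pair satisfies \eqref{MAIN}, whereas you verify this explicitly (correctly, through the factorizations $\wt B_1-\wt B_0=\tfrac12\,D(\wt Z_1-\wt Z_0)D$, etc., the identity $\ran(CC^*)^{1/2}=\ran C$, and $\ker D=\sH_0$), and your nondegeneracy argument via $\ker(I+\wt Z_0)=\{0\}$ is a harmless variant of the paper's remark that $\wt Z_0\neq -I_\sN$ and $\wt Z_1\neq I_\sN$.
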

\begin{proof}
Let $\wt Z_0$ and $\wt Z_1$ be a pair of selfadjoint contractions in
$\sN$ as constructed in \eqref{Zpair} and define a pair of
$sc$-extensions of $B$ by means of \eqref{param1}:
\[
\wt{B}_k = (B_M + B_\mu)/2 + (B_M - B_\mu)^{1/2} \wt Z_k (B_M -
B_\mu)^{1/2} /2,\quad k=0,1.
\]
Then the pair $\{\wt B_0, \wt B_1\}$ satisfies all the conditions in
\eqref{MAIN} and, since clearly $\wt Z_0\neq -I_\sN$ and $\wt
Z_1\neq I_\sN$, one concludes that $\wt B_0\neq B_\mu$ and $\wt
B_1\neq B_M$.
\end{proof}

\subsection{Construction of special pairs of nonnegative selfadjoint operators and
nonnegative selfadjoint extensions} \label{spcontr2}
\begin{corollary}
\label{specpairse} There exist pairs $\{\wt S_0, \wt S_1\}$ of
unbounded nonnegative selfadjoint operators in $\sH$ such that
\begin{enumerate}
\item $\dom \wt S_0\cap\dom \wt S_1=\{0\}$,
\item $\dom \wt S^{1/2}_0\subset\dom \wt S^{1/2}_1$ and 
the form $\wt S_0[\cdot,\cdot]$ is the closed restriction of the
form $\wt S_1[\cdot,\cdot]$,
\item $\inf\limits_{\f\in\dom \wt S^{1/2}_0}\wt S_1[f-\f]=0$ for all $f\in\dom \wt S^{1/2}_1$.
\end{enumerate}
\end{corollary}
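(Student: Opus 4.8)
The plan is to produce the required pair $\{\wt S_0,\wt S_1\}$ by applying the Cayley transform to the pair $\{\wt B_0,\wt B_1\}$ of selfadjoint contractions constructed in \eqref{Zpair} and Corollary~\ref{cor4.1}, working in the model situation $\sH=\sK\oplus\sK$ with $\sN=\sH$, i.e. with $B$ the Hermitian contraction whose domain is the trivial subspace $\{0\}$. In that case $B_\mu=-I$, $B_M=I$, so by \eqref{param1} one simply has $\wt B_0=\wt Z_0=2\cX-I$ and $\wt B_1=\wt Z_1=2\cY-I$; set $\wt S_k=\cC(\wt B_k)=(I-\wt B_k)(I+\wt B_k)^{-1}$, $k=0,1$. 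The key point is that the nondegeneracy properties already recorded, namely $\ker(I\pm\wt Z_0)=\{0\}$ and $\ker(I+\wt Z_1)=\{0\}$ together with $\ker(\wt Z_1-\wt Z_0)=\{0\}$, guarantee that $\wt S_0$ and $\wt S_1$ are genuine (unbounded) nonnegative selfadjoint operators, not relations with nontrivial multivalued part.

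First I would verify (2) and (3). Since $\wt B_0\le \wt B_1$ and, by \eqref{03}–\eqref{04} (equivalently \eqref{Zpair}), the two range-intersection conditions
\[
\ran(I+\wt B_0)^{1/2}\cap\ran(\wt B_1-\wt B_0)^{1/2}=\ran(I-\wt B_1)^{1/2}\cap\ran(\wt B_1-\wt B_0)^{1/2}=\{0\}
\]
hold, condition (iv) of Theorem~\ref{new} is satisfied. Hence by the equivalence (iv)$\Leftrightarrow$(i) of that theorem, the form $\wt S_0[\cdot,\cdot]$ is a closed restriction of $\wt S_1[\cdot,\cdot]$ (this is the first assertion of (2) once one notes $\cdom\wt S_k=\sH$, which follows from $\ker(I+\wt B_k)=\{0\}$, so that $\dom\wt S_0^{1/2}=\cD[\wt S_0]\subset\cD[\wt S_1]=\dom\wt S_1^{1/2}$), and the approximation property $\inf\{\wt S_1[f-\f]:\f\in\cD[\wt S_0]\}=0$ for all $f\in\cD[\wt S_1]$ holds, which is exactly (3).

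It remains to prove (1), the disjointness $\dom\wt S_0\cap\dom\wt S_1=\{0\}$. From \eqref{Cayley2}, $\dom\wt S_k=\ran(I+\wt B_k)=\ran(I+\wt Z_k)$, which up to the factor $2$ equals $\ran\cX$ for $k=0$ and $\ran\cY$ for $k=1$. Therefore $\dom\wt S_0\cap\dom\wt S_1=\ran\cX\cap\ran\cY$, and this is $\{0\}$ by \eqref{intzeroo}. Finally I would check that $\wt S_0$ and $\wt S_1$ are unbounded: if, say, $\wt S_0$ were bounded then $\dom\wt S_0=\sH$, forcing $\ran\cX=\sH$, i.e. $\cX$ boundedly invertible; but $\cX=\mathrm{diag}(F^2,I-F^2)$ with $\|F\|<1$ and $\cran F=\sK$ shows $\ran F^2$ is a proper dense (non-closed) subspace of $\sK$, so $\cX$ is not surjective — contradiction; the same argument with $I-F^2$ handles $\wt S_1$. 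The main obstacle is not any single estimate but rather the bookkeeping needed to confirm that every construction-level property translates correctly through the Cayley transform — in particular that the model choice $\dom B=\{0\}$ legitimately realizes $\sN=\sH$ and that the various $\ker=\{0\}$ conditions are precisely what rule out multivalued parts and boundedness; once that dictionary is in place, the statement follows by citing Theorem~\ref{new} and \eqref{intzeroo}.
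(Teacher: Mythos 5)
Your proposal is correct and takes essentially the same route as the paper: Cayley-transform the pair $\{\wt Z_0,\wt Z_1\}$ from \eqref{Zpair}, deduce (2)--(3) from \eqref{03}--\eqref{04} via Theorem~\ref{new} (the paper also invokes Proposition~\ref{new0}), and get (1) from \eqref{intzeroo}. Only your aside that unboundedness of $\wt S_1$ follows by ``the same argument with $I-F^2$'' is imprecise, since $\cY$ is not diagonal; but unboundedness of both operators is immediate anyway, either from \eqref{intzeroo} (surjectivity of $\cX$ or $\cY$ would contradict $\ran\cX\cap\ran\cY=\{0\}$) or directly from property (1), because a bounded selfadjoint operator has domain all of $\sH$.
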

\begin{proof}
Consider the Cayley transforms of $\wt Z_0$ and $\wt Z_1$ as
constructed in \eqref{Zpair}:
\[
 \wt S_0:=(I-\wt Z_0)(I+\wt Z_0)^{-1}=(I-\cX)\cX^{-1},\quad
 \wt S_1:=(I-\wt Z_1)(I+\wt Z_1)^{-1}=(I-\cY)\cY^{-1}.
\]
Then $\wt S_0$ and $\wt S_1$, as well as the inverse $\wt
S_0^{-1}=\cX(I-\cX)^{-1}$, are nonnegative selfadjoint
\textit{operators}. Equality \eqref{intzeroo} means that
\[
\dom \wt S_0\cap\dom \wt S_1=\{0\}.
\]
From \eqref{Zpair}, \eqref{03}, \eqref{04}, Proposition~\ref{new0},
and Theorem \ref{new} we get that the form
\[
\begin{array}{l}
\wt S_0[\f,\psi]=\left((I-\cX)^{1/2}\cX^{-1/2}\f,(I-\cX)^{1/2}\cX^{-1/2}\psi\right),\\
\f,\psi\in\cD[\wt S_0]=\dom \cX^{-1/2}=\ran\cX^{1/2},
\end{array}
\]
is a closed restriction of the form
\[
\begin{array}{l}
\wt S_1[u,v]=\left((I-\cY)^{1/2}\cY^{-1/2}u,(I-\cY)^{1/2}\cY^{-1/2}v \right),\\
u,v\in\cD[\wt S_1]=\dom \cY^{-1/2}=\ran\cY^{1/2}
\end{array}
\]
and
\[
\inf\limits_{\f\in\dom \wt S^{1/2}_0}||\wt S_1^{1/2}(f-\f)||^2=0
\]
for all $f\in\dom \wt S^{1/2}_1.$
\end{proof}
Let $S$ be a nonnegative symmetric linear relation. It follows from
Theorems~\ref{new} and~\ref{Tahs} that one can construct pairs
$\{\wt S_0,\wt S_1\}$ of nonnegative selfadjoint extensions of $S$
satisfying the conditions \eqref{nonnegpair} by means of Cayley
transforms. For simplicity the next result is formulated for a
nonnegative symmetric operator $S$ along the lines in
Theorem~\ref{new}.

\begin{corollary}\label{cor4.2}
Let $S$ be a closed nonnegative symmetric, not necessary densely
defined, operator in the Hilbert space $\sH$ and assume that $S$
admits disjoint nonnegative selfadjoint (operator) extensions. Then
there exists a pair $\{\wt S_0,\wt S_1\}$ of nonnegative selfadjoint
extensions of $S$ such that
\begin{equation}
\label{spnon} \left\{\begin{array}{l}  \wt{S}_0\cap \wt{S}_1= {S},\\
\mbox{\textit{the sesquilinear form }} \wt{S}_0[\cdot,\cdot]
  \mbox{\textit{ is a closed restriction of the form }}       \wt{S}_1[\cdot,\cdot],\\
  \inf\left\{\wt S_1[u-\f],\;\f\in\cD[\wt S_0]\right\}=0 \quad\mbox{for all}\quad u\in\cD[\wt S_1]
  \end{array}\right.
\end{equation}
Moreover, if $n_\pm(S)=\infty$ then the pair $\{\wt S_0,\wt S_1\}$
differs in general from the pair $\{S_{\rm{F}},S_{\rm{K}}\}$.
\end{corollary}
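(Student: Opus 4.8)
The plan is to reduce the statement about the symmetric operator $S$ to the already-constructed model pair living on the defect space, via the Cayley transform. First I would form $B=\cC(S)$, which by the results in Section~2 is a Hermitian contraction with $\dom B=\sH_0:=\cdom S$ and $\sN:=\sH\ominus\sH_0$. The hypothesis that $S$ admits disjoint nonnegative selfadjoint operator extensions translates, under $\cC$, into the statement that $B$ admits $sc$-extensions $\wt B_0\le\wt B_1$ with $\ker(\wt B_1-\wt B_0)=\dom B$; in particular this forces $\ker(B_M-B_\mu)=\dom B$, so the standing assumption of Subsection~4.2 is in force. (One should also note that disjointness of the selfadjoint extensions as \emph{operators}, rather than relations, is exactly the requirement that the $sc$-extensions have trivial kernels on $\sN$ in the appropriate sense; this is where the ``operator'' qualifier is used.)

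Next I would invoke Corollary~\ref{cor4.1}: since the hypotheses there (namely $\ker(B_M-B_\mu)=\dom B$, and when $\dim\sN=\infty$) are met, there is a pair $\{\wt B_0,\wt B_1\}$ of $sc$-extensions of $B$ satisfying all the conditions in \eqref{MAIN}. When $\dim\sN<\infty$ one simply takes $\{\wt B_0,\wt B_1\}=\{B_\mu,B_M\}$, which trivially satisfies \eqref{MAIN}. Define $\wt S_k:=\cC(\wt B_k)=(I-\wt B_k)(I+\wt B_k)^{-1}$ for $k=0,1$; these are nonnegative selfadjoint extensions of $S$. Now apply Theorem~\ref{Tahs}: the pair $\{\wt B_0,\wt B_1\}$ satisfies \eqref{MAIN} if and only if $\{\wt S_0,\wt S_1\}$ satisfies \eqref{nonnegpair}, i.e. $\wt S_0\cap\wt S_1=S$, the form $\wt S_0[\cdot,\cdot]$ is a closed restriction of $\wt S_1[\cdot,\cdot]$, and $\wt S_1^{-1}[\cdot,\cdot]$ is a closed restriction of $\wt S_0^{-1}[\cdot,\cdot]$. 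Finally, by Theorem~\ref{new} (the equivalence (iii)$\Leftrightarrow$(i)) the last two form conditions are together equivalent to: $\wt S_0[\cdot,\cdot]$ is a closed restriction of $\wt S_1[\cdot,\cdot]$ together with the approximation property $\inf\{\wt S_1[u-\f]:\f\in\cD[\wt S_0]\}=0$ for all $u\in\cD[\wt S_1]$. Chaining these equivalences gives exactly \eqref{spnon}.

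For the last assertion, that when $n_\pm(S)=\infty$ the pair can be chosen different from $\{S_{\rm F},S_{\rm K}\}$: note $n_\pm(S)=\dim\sN=\infty$, so Corollary~\ref{cor4.1} produces $\{\wt B_0,\wt B_1\}$ with $\wt B_0\neq B_\mu$ and $\wt B_1\neq B_M$. Since $\cC$ is a bijection and $\cC(B_\mu)=S_{\rm F}$, $\cC(B_M)=S_{\rm K}$, one gets $\wt S_0\neq S_{\rm F}$ and $\wt S_1\neq S_{\rm K}$, so the pair differs from $\{S_{\rm F},S_{\rm K}\}$.

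I expect the only genuine subtlety — the ``main obstacle'' — to be the bookkeeping around the operator-versus-relation distinction: one must check that the hypothesis ``$S$ admits disjoint nonnegative selfadjoint \emph{operator} extensions'' correctly matches up, under the Cayley transform, with the condition $\ker(B_M-B_\mu)=\dom B$ used in Corollary~\ref{cor4.1}, and that the resulting $\wt S_k=\cC(\wt B_k)$ are genuinely operators (equivalently $\ker(I+\wt B_k)=\{0\}$), as is guaranteed in the construction of \eqref{Zpair} where $\ker(I+\wt Z_0)=\ker(I+\wt Z_1)=\{0\}$ was explicitly recorded. Once this correspondence is pinned down, the proof is just an application of Corollary~\ref{cor4.1} together with Theorems~\ref{Tahs} and~\ref{new}.
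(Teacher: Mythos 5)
Your argument follows the paper's own proof essentially verbatim: Cayley transform, disjointness of nonnegative selfadjoint extensions yielding $\ker(B_M-B_\mu)=\dom B$, Corollary~\ref{cor4.1} to produce a pair $\{\wt B_0,\wt B_1\}$ satisfying \eqref{MAIN} (different from $\{B_\mu,B_M\}$ when the defect is infinite), and then Theorems~\ref{Tahs} and~\ref{new} to transfer \eqref{MAIN} into \eqref{spnon} for $\wt S_k=\cC(\wt B_k)$. One small correction: $\dom B=\ran(I+S)$, not $\cdom S$; with this, $\sN=\sH\ominus\dom B$ is the defect subspace $\sN_{-1}$ of $S$, so $\dim\sN=n_\pm(S)$ and the rest of your reasoning, including the final non-uniqueness assertion, is exactly as in the paper.
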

\begin{proof}
The Cayley transform $B=\cC(S)=-I+2(I+S)^{-1}$ of $S$ is a
nondensely defined Hermitian contraction with $\ker (I+B)=\{0\}$.
The disjointness assumption implies that $S_{\rm{F}}\cap
S_{\rm{K}}=S$, i.e., $S_{\rm{F}}$ and $S_{\rm{K}}$ are also disjoint
nonnegative extensions of $S$. Therefore their Cayley transforms
$B_\mu=\cC(S_{\rm{F}})$ and $B_M=\cC(S_{\rm{K}})$ satisfy the
equality $\ker(B_M-B_\mu)=\dom B$. Now it is clear that the pair
$\{B_\mu, B_M\}$ satisfies all the conditions in \eqref{MAIN}.
Moreover, if $n_\pm(S)=\infty$ then $\dim\sN=\infty$ and hence by
Corollary~\ref{cor4.1} there are also other pairs $\{\wt B_0, \wt
B_1\}$ of $sc$-extensions of $B$ satisfying the properties
\eqref{MAIN}. Finally, it follows from Theorems~\ref{new}
and~\ref{Tahs} that
\[
 \wt S_k=(I-\wt B_k)(I+\wt B_k)^{-1},\quad k=0,1,
\]
are nonnegative selfadjoint extensions of $S$ satisfying the
properties in \eqref{spnon}.
\end{proof}


\section{
$Q$- functions of Hermitian contraction corresponding to the special
pairs of selfadjoint contractive extensions}

The following classes of $Q$-functions of a nondensely defined
Hermitian contraction $B$ with $\dom B=\sH_0\subset \sH$, associated
to the pair $\{\wt B_0,\wt B_1\}$ of $sc$-extensions of $B$ in $\sH$
which satisfy the conditions in \eqref{MAIN}, were introduced and
studied in \cite{AHS}:
\begin{equation}
\label{q1} {\wt Q}_0(\lambda)=\left[(\wt B_1-\wt B_0)^{1/2}(\wt
B_0-\lambda I)^{-1}(\wt B_1-\wt B_0)^{1/2}+I\right] \uphar{\sN},
\end{equation}
\begin{equation}
\label{q2} {\wt Q}_1(\lambda)=\left[(\wt B_1-\wt B_0)^{1/2}(\wt
B_1-\lambda I)^{-1}(\wt B_1-\wt B_0)^{1/2}-I\right]\uphar{\sN},\quad
\lambda\in \Ext[-1,1]
\end{equation}
These functions belongs to the Herglotz-Nevanlinna class. It is easy
to verify that
\begin{equation}
\label{inv11} {\wt Q}_0(\lambda){\wt Q}_1(\lambda)={\wt
Q}_1(\lambda){\wt Q}_0(\lambda)=-I_{\sN}, \quad \lambda\in
\Ext[-1,1].
\end{equation}
Moreover, the function $\wt Q_0$ possesses the properties
\begin{equation}
\label{PROP}
\begin{array}{l}
 \lim\limits_{\lambda\to\infty} {\wt Q}_0(\lambda)=I_\sN,\\
 s-\lim\limits_{\lambda\downarrow 1}\wt Q_0 (\lambda)=0,\;\lim\limits_{\lambda\uparrow -1}(\wt Q_0(\lambda)h,h)=+\infty,\; h\in\sN\setminus\{0\}
\end{array}
\end{equation}
while for $\wt Q_1$ one has
\begin{equation}
\label{PROP1}
\begin{array}{l}
 \lim\limits_{\lambda\to\infty} {\wt Q}_1(\lambda)=-I_\sN,\\
 s-\lim\limits_{\lambda\uparrow -1}\wt Q_1(\lambda)=0, \;  \lim\limits_{\lambda\downarrow 1}(\wt Q_1(\lambda)h,h)=-\infty,\; h\in\sN\setminus\{0\}.
\end{array}
\end{equation}
Observe that from \eqref{q1}, \eqref{q2}, and \eqref{inv11} follow
implications
\begin{equation}
\label{znak}
\begin{array}{l}
\lambda\in(-\infty,-1)\Rightarrow \wt Q_0(\lambda)> 0,\;\wt
Q_1(\lambda)< 0,\\
\lambda\in (1,+\infty)\Rightarrow \wt Q_1(\lambda)<0,\;\wt
Q_0(\lambda)>0.
\end{array}
\end{equation}
For the pair $\{B_\mu, B_M\}$ the corresponding $Q$-functions,
called the $Q_\mu$ and $Q_M$-functions, were originally defined and
investigated by Kre\u\i n and Ovcharenko in \cite{KrO}. It is stated
in \cite{KrO} that if the function $\wt Q_0$ ($\wt Q_1$) possesses
the properties in \eqref{PROP}, then there exists a nondensely
defined Hermitian contraction $B$ such that $\ker (B_M-B_\mu)=\dom
B$ and $\wt Q_0$ (respect., $\wt Q_1$) coincides with $Q_\mu$
(respect., with $ Q_M$). However, this statements appears to be true
only in the case that $\dim\sN<\infty$.

The class of Herglotz-Nevanlinna functions holomorphic in
$\dC\setminus[-1,1]$ and satisfying conditions \eqref{PROP}
(respect., \eqref{PROP1}) is denoted in \cite{AHS} by $\sS_\mu(\sN)$
(respect., by $\sS_M(\sN)$). Thus the function $\wt Q_0$ defined by
\eqref{q1} belongs to the class $\sS_\mu(\sN)$, while the function
$\wt Q_1(\lambda)=\wt Q^{-1}_0(\lambda)$ belongs to the class
$\sS_M(\sN)$. The next theorem, which contains a proper
characterization for the conditions stated by Kre\u\i n and
Ovcharenko in \cite{KrO}, has been established in \cite{AHS}.
\begin{theorem}
\label{Theor} Assume that ${\wt Q}\in\sS_\mu(\sN)$. Then there exist
a Hilbert space $\sH$ containing $\sN$ as a subspace, a Hermitian
contraction $B$ in $\sH$ defined on $\dom B=\sH\ominus\sN$, and a
pair $\{\wt B_0,\wt B_1\}$ of $sc$-extensions of $B$, satisfying
\eqref{MAIN} such that
\[
\begin{array}{l}
Q(\lambda)=\left[(\wt B_1-\wt B_0)^{1/2}(\wt B_0-\lambda I)^{-1}(\wt B_1-\wt B_0)^{1/2}+I\right] \uphar{\sN},\\
-Q^{-1}(\lambda)=\left[(\wt B_1-\wt B_0)^{1/2}(\wt B_1-\lambda
I)^{-1}(\wt B_1-\wt B_0)^{1/2}-I\right]\uphar{\sN}, \quad \lambda\in
\Ext[-1,1].
\end{array}
\]
If $\dim\sN<\infty$, then  necessary
$$
\left\{\begin{array}{l}\wt B_0=B_\mu\\
\wt B_1=B_M
\end{array}. \right.$$
\end{theorem}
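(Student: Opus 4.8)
The plan is to reconstruct from an abstract function $\wt Q\in\sS_\mu(\sN)$ a model Hilbert space and a Hermitian contraction whose associated special pair $\{\wt B_0,\wt B_1\}$ reproduces $\wt Q$. The natural starting point is the integral (Herglotz) representation of $\wt Q$ on $\dC\setminus[-1,1]$, which together with the normalization conditions in \eqref{PROP} (namely $\lim_{\lambda\to\infty}\wt Q(\lambda)=I_\sN$, $s\mbox{-}\lim_{\lambda\downarrow 1}\wt Q(\lambda)=0$, and $\lim_{\lambda\uparrow-1}(\wt Q(\lambda)h,h)=+\infty$) pins down the form of the representing measure: the absence of a linear term, concentration of the measure on $[-1,1]$, and a prescribed total mass $I_\sN$. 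From this representation one builds, in the standard Kre\u\i n--Naimark manner, a Hilbert space $\sH\supset\sN$ and a bounded selfadjoint operator $\wt B_0$ on $\sH$ together with an operator $\Gamma\in\bL(\sN,\sH)$ so that $\wt Q(\lambda)=\Gamma^*(\wt B_0-\lambda I)^{-1}\Gamma\cdot(\text{something})+I_\sN$, and such that $\sH$ is minimal with respect to $\wt B_0$ and $\ran\Gamma$. The key algebraic point is to identify $\Gamma$ with $(\wt B_1-\wt B_0)^{1/2}\uphar\sN$ for a second contraction $\wt B_1\ge \wt B_0$; this forces $\wt B_1-\wt B_0 = \Gamma\Gamma^*$ once one checks that $\wt B_1:=\wt B_0+\Gamma\Gamma^*$ is again a contraction, which follows from the boundedness estimates encoded in $\wt Q(\lambda)\to I_\sN$.

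Next I would verify that the pair $\{\wt B_0,\wt B_1\}$ so constructed satisfies the three conditions in \eqref{MAIN}. The inequality $\wt B_0\le\wt B_1$ is immediate; the condition $\ker(\wt B_1-\wt B_0)=\dom B$ is arranged by \emph{defining} $\sH_0:=\sH\ominus\sN$ and $B:=\wt B_0\uphar\sH_0$ (or the common restriction of $\wt B_0$ and $\wt B_1$), noting that $\wt B_1-\wt B_0=\Gamma\Gamma^*$ vanishes exactly on $\sN^\perp=\sH_0$ because $\Gamma$ maps into $\sH$ with $\ker\Gamma^*\supset\sH_0$ by construction and $\ker\Gamma=\{0\}$ by minimality/simplicity of the model. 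The two range-intersection conditions
\[
\ran(\wt B_1-\wt B_0)^{1/2}\cap\ran(\wt B_0-B_\mu)^{1/2}=\ran(\wt B_1-\wt B_0)^{1/2}\cap\ran(B_M-\wt B_1)^{1/2}=\{0\}
\]
are the delicate analytic heart: they must be read off from the remaining boundary behaviour of $\wt Q$, specifically the condition $s\mbox{-}\lim_{\lambda\downarrow 1}\wt Q(\lambda)=0$ (which controls the mass of the representing measure near the right endpoint $1$, hence the Friedrichs-type extremality relating $\wt B_1$ to $B_M$) and the condition $\lim_{\lambda\uparrow-1}(\wt Q(\lambda)h,h)=+\infty$ (controlling the mass near $-1$, hence relating $\wt B_0$ to $B_\mu$ via \eqref{novrav} and Remark~\ref{novrema}). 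Concretely I would use the characterizations of $\ran(B_M-\wt B_1)^{1/2}$ and $\ran(\wt B_0-B_\mu)^{1/2}$ via shorted operators, $\eqref{extr}$--$\eqref{novrav}$, and translate the two range-triviality conditions into statements about the boundary values of $\wt Q_0$ and $\wt Q_1=\wt Q_0^{-1}$, exactly as in the proof of Theorem~\ref{new} where $\ran(I+\wt B_0)^{1/2}\cap\ran(\wt B_1-\wt B_0)^{1/2}=\{0\}$ was tied to a closed-restriction property of forms.

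For the formula $-Q^{-1}(\lambda)=\bigl[(\wt B_1-\wt B_0)^{1/2}(\wt B_1-\lambda I)^{-1}(\wt B_1-\wt B_0)^{1/2}-I\bigr]\uphar\sN$ I would not recompute anything: it follows from \eqref{inv11} once $\wt Q(\lambda)$ is realized as $\wt Q_0(\lambda)$ for the constructed pair, since $\wt Q_1(\lambda)=\wt Q_0(\lambda)^{-1}$ identically and $\wt Q_1$ has the asserted resolvent form by \eqref{q2}. Finally, the finite-dimensional rigidity claim — that $\dim\sN<\infty$ forces $\wt B_0=B_\mu$ and $\wt B_1=B_M$ — is quoted from the last displayed assertion in Section~3 (the result of \cite{AHS}): when the defect numbers are finite the only pair $\{\wt B_0,\wt B_1\}$ of $sc$-extensions satisfying \eqref{MAIN} is $\{B_\mu,B_M\}$, equivalently at the level of relations the only pair satisfying \eqref{nonnegpair} is $\{S_{\rm F},S_{\rm K}\}$; applying this to the Hermitian contraction $B$ just constructed gives the conclusion. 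I expect the main obstacle to be the careful bookkeeping in the minimal model construction — ensuring $\ker\Gamma=\{0\}$, that $\wt B_1$ stays a contraction, and that the endpoint-mass conditions on the representing measure translate exactly into the two range-intersection conditions of \eqref{MAIN} — rather than any single hard estimate.
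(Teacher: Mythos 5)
First, a point of comparison: this paper does not prove Theorem~\ref{Theor} at all --- it is quoted from \cite{AHS} (``has been established in \cite{AHS}''), so there is no internal proof to measure your argument against. Judged on its own, your outline follows exactly the route one expects: a Nevanlinna integral representation realized on a minimal model space as $\wt Q(\lambda)=I_\sN+\Gamma^*(\wt B_0-\lambda I)^{-1}\Gamma$, the definition $\wt B_1:=\wt B_0+\Gamma\Gamma^*$, identification of $\sN$ with $\cran\Gamma$ (via the polar decomposition of $\Gamma$, so that $\Gamma=(\wt B_1-\wt B_0)^{1/2}\uphar\sN$ and $\dom B=\ker\Gamma^*=\ker(\wt B_1-\wt B_0)$), the $-Q^{-1}$ formula from \eqref{inv11} (note $\wt Q_1=-\wt Q_0^{-1}$, not $\wt Q_0^{-1}$), and the finite--dimensional rigidity quoted from the uniqueness statement of \cite{AHS} recalled after Theorem~\ref{Tahs} and in Section~4. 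That skeleton is sound.

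Two places where the sketch, as written, is not yet a proof. First, the contractivity of $\wt B_1$ does \emph{not} follow from ``$\wt Q(\lambda)\to I_\sN$''; with only that normalization $\Gamma^*(I-\wt B_0)^{-1}\Gamma$ can be unbounded. What gives it is precisely the endpoint condition $s\mbox{-}\lim_{x\downarrow 1}\wt Q(x)=0$ from \eqref{PROP}: by monotone convergence of $(x-\wt B_0)^{-1}$ as $x\downarrow 1$ one gets $(\wt Q(x)h,h)\to \|h\|^2-\|(I-\wt B_0)^{[-1/2]}\Gamma h\|^2$, so $V:=(I-\wt B_0)^{[-1/2]}\Gamma$ must be an isometry; hence $\Gamma\Gamma^*=(I-\wt B_0)^{1/2}VV^*(I-\wt B_0)^{1/2}\le I-\wt B_0$, i.e.\ $\wt B_1\le I$, and since $VV^*$ is then an orthogonal projection, \eqref{Zproj} gives $\ran(I-\wt B_1)^{1/2}\cap\ran(\wt B_1-\wt B_0)^{1/2}=\{0\}$, which by \eqref{novrav} and Remark~\ref{novrema} is the $B_M$-side condition in \eqref{MAIN}. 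So the contraction bound and the right-endpoint range condition are one and the same computation, not separate issues as your text suggests. Second, you leave the left-endpoint condition at the level of ``translate''; the actual argument is the analogous monotone limit $x\uparrow -1$: $(\wt Q(x)h,h)\uparrow \|h\|^2+\|(I+\wt B_0)^{[-1/2]}\Gamma h\|^2$, and divergence for every $h\neq 0$ says exactly that $\ker\Gamma=\{0\}$ and $\ran\Gamma\cap\ran(I+\wt B_0)^{1/2}=\{0\}$; since $\ran(\wt B_1-\wt B_0)^{1/2}=\ran\Gamma$, Remark~\ref{novrema} turns this into the $B_\mu$-side condition of \eqref{MAIN}. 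With these two limit computations carried out explicitly, plus the bookkeeping you flag, your plan does yield the theorem; as it stands it defers precisely the analytic core to ``exactly as in the proof of Theorem~\ref{new}'', which is an analogy, not an argument.
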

It is emphasized that in the case $\dim \sN=\infty$ there exist
pairs different from $\{B_\mu, B_M\}$ satisfying \eqref{MAIN} and
their corresponding $Q$-functions given by \eqref{q1} and \eqref{q2}
also satisfy \eqref{PROP} and \eqref{PROP1}, giving a contradiction
to the above mentioned result in \cite{KrO} in the infinite
dimensional case $\dim\sN=\infty$.

Recall from \cite{KrO} that two Hermitian operators $B$ and $B'$
defined on the subspaces $\dom B$ and $\dom B'$ of the Hilbert
spaces $\sH=\dom B\oplus\sN$ and $\sH'=\dom B'\oplus\sN$,
respectively, are said to be \textit{$\sN$-unitarily equivalent}
\cite{AHS11}, \cite{ArlKlotz2010}, if there is a unitary operator
$U$ from $\sH$ onto $\sH'$, such that
\[
 U\uphar \sN=I_\sN,\quad U(\dom B)=\dom B',\quad UB=B'U.
\]
Moreover, $B$ in $\sH$ is said to be
 \textit{simple} if there is
no nontrivial subspace invariant under $B$. An equivalent condition
due to M.G.~Kre\u{\i}n and I.E.~Ovcharenko \cite[Lemma~2.1]{KrO} for
Hermitian contraction $B$ is that the subspace $\sN=\sH\ominus\dom
B$ is generating for some (equivalently for every) selfadjoint
extension $\wt B$ of $B$:
\[
  \sH=\cspan\{\,\wt B^n\sN:\, n=0,1,\dots \,\}=\cspan \{\,(\wt B-\lambda I)^{-1} \sN :\, |\lambda|>1\,\}.
\]
In \cite{KrO} it is shown that the simple part of the Hermitian
contraction $B$ is uniquely determined by its $Q_\mu$
($Q_M$)-function up to unitary equivalence. An analogous statement
holds for functions belonging to the classes $\sS_\mu(\sN)$ and
$\sS_M(\sN)$. Moreover, the following generalization of this result
for the pair $\{\wt B_0,\wt B_1\}$ of $sc$-extensions of $B$ is also
true.

\begin{proposition}
\label{pruniq} Let $B$ and $B'$ be simple Hermitian contractions in
$\sH=\dom B\oplus\sN$ and $\sH'=\dom B'\oplus\sN$, respectively, and
let $\wt Q_0(\lambda)$ and $\wt Q_0'(\lambda)$ be defined via
\eqref{q1} ($\wt Q_1(\lambda)$ and $\wt Q_1'(\lambda)$ be defined
via \eqref{q2}) with the pair $\{\wt B_0,\wt B_1\}$ and $\{\wt
B'_0,\wt B'_1\}$, respectively. If $\wt Q_0(\lambda)$ and $\wt
Q_0'(\lambda)$ are equal, then  $B$ and $B'$ and the pairs $\{\wt
B_0,\wt B_1\}$ and $\{\wt B'_0,\wt B'_1\}$ are unitarily equivalent
with the same unitary operator $U$.
\end{proposition}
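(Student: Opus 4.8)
The plan is to adapt the classical Kre\u\i n--Langer--Textorius uniqueness argument for $Q$-functions to the present pair $\{\wt B_0,\wt B_1\}$ and to supplement it with an asymptotic step at $\lambda\to\infty$, which recovers the operator $(\wt B_1-\wt B_0)\uphar\sN$ from $\wt Q_0$ and thereby forces the implementing unitary to be the identity on $\sN$. First I would introduce the $\gamma$-field type operator functions
\[
 \Gamma(\lambda)=(\wt B_0-\lambda I)^{-1}(\wt B_1-\wt B_0)^{1/2}\uphar\sN\colon\sN\to\sH,\quad
 \Gamma'(\lambda)=(\wt B_0'-\lambda I)^{-1}(\wt B_1'-\wt B_0')^{1/2}\uphar\sN\colon\sN\to\sH',
\]
for $\lambda\in\Ext[-1,1]$. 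Using a routine resolvent identity together with the fact that $(\wt B_1-\wt B_0)^{1/2}$ is selfadjoint with $\cran(\wt B_1-\wt B_0)^{1/2}=\sN$ (because $\ker(\wt B_1-\wt B_0)=\dom B$ by \eqref{MAIN}), one derives from \eqref{q1} the identities
\[
 \wt Q_0(\lambda)-\wt Q_0(\mu)^*=(\lambda-\bar\mu)\,\Gamma(\mu)^*\Gamma(\lambda),\qquad
 \Gamma(\lambda)=\Gamma(\mu)+(\lambda-\mu)(\wt B_0-\lambda I)^{-1}\Gamma(\mu),
\]
and the analogous ones for $\Gamma'$. Since $(\wt B_1-\wt B_0)^{1/2}\sN$ is dense in $\sN$ and, by simplicity of $B$, the subspace $\sN$ is generating for the $sc$-extension $\wt B_0$, one gets $\cspan\{\,\Gamma(\lambda)\sN\colon|\lambda|>1\,\}=\sH$, and likewise $\cspan\{\,\Gamma'(\lambda)\sN\colon|\lambda|>1\,\}=\sH'$.

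Next I would define $U$ on the dense linear manifold $\span\{\,\Gamma(\lambda)h\colon\lambda\in\Ext[-1,1],\ h\in\sN\,\}$ of $\sH$ by $U\,\Gamma(\lambda)h=\Gamma'(\lambda)h$. By the first displayed identity and the hypothesis $\wt Q_0=\wt Q_0'$ the associated Gram matrices coincide, so $U$ is well defined and isometric; the two generating properties then make its continuous extension a unitary operator $U\colon\sH\to\sH'$. Feeding the $\gamma$-field identity into $U$ gives $U(\wt B_0-\lambda I)^{-1}\Gamma(\mu)h=(\wt B_0'-\lambda I)^{-1}U\Gamma(\mu)h$ on the dense manifold, hence $U(\wt B_0-\lambda I)^{-1}=(\wt B_0'-\lambda I)^{-1}U$ for every $\lambda\in\Ext[-1,1]$, and therefore $U\wt B_0=\wt B_0'\,U$.

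It remains to check that $U$ acts identically on $\sN$, and here the behaviour at infinity is used. Multiplying \eqref{q1} by $\lambda$ and passing to the limit $\lambda\to\infty$ one obtains, in the operator norm of $\bL(\sN)$,
\[
 T:=(\wt B_1-\wt B_0)\uphar\sN=\lim_{\lambda\to\infty}\lambda\bigl(I_\sN-\wt Q_0(\lambda)\bigr),
\]
so that $\wt Q_0=\wt Q_0'$ forces $T=T'$, where $T'=(\wt B_1'-\wt B_0')\uphar\sN$. The same computation gives $-\lambda\,\Gamma(\lambda)\to T^{1/2}$ and $-\lambda\,\Gamma'(\lambda)\to (T')^{1/2}$; applying $U$ and using $T=T'$ yields $U\,T^{1/2}h=T^{1/2}h$ for all $h\in\sN$, and since $\ran T^{1/2}$ is dense in $\sN$ this gives $U\uphar\sN=I_\sN$. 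Consequently $U$ maps the common subspace $\sN$ onto itself identically, so $U\sH_0=\sH_0'$ (with $\sH_0=\dom B$, $\sH_0'=\dom B'$) and $P_\sN'\,U=P_\sN$. Writing $\wt B_1=\wt B_0+(\wt B_1-\wt B_0)$ and noting that $\wt B_1-\wt B_0$ acts only inside $\sN$, the relations $U\wt B_0=\wt B_0'U$, $U\uphar\sN=I_\sN$ and $T=T'$ combine to give $U(\wt B_1-\wt B_0)=(\wt B_1'-\wt B_0')U$, hence $U\wt B_1=\wt B_1'\,U$; restricting $U\wt B_0=\wt B_0'U$ to $\dom B$ and using $U\sH_0=\sH_0'$ yields $U(\dom B)=\dom B'$ and $UB=B'U$. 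Finally the parenthetical hypothesis $\wt Q_1=\wt Q_1'$ reduces to the one treated, since $\wt Q_1=-\wt Q_0^{-1}$ by \eqref{inv11} and $\wt Q_0(\lambda)$ is boundedly invertible on $\Ext[-1,1]$.

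I expect the main obstacle to be precisely the last step: the bare $Q$-function argument delivers only unitary equivalence of the pairs $\{B,\wt B_0\}$ and $\{B',\wt B_0'\}$, and one must recover $(\wt B_1-\wt B_0)\uphar\sN$ from $\wt Q_0$ asymptotically (and deduce $P_\sN'U=P_\sN$) in order to upgrade this to a single unitary $U$ fixing $\sN$ that simultaneously intertwines $B$ with $B'$ and $\wt B_1$ with $\wt B_1'$.
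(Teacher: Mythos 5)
Your proof is correct, and it implements exactly the argument the paper itself leaves implicit: the proposition is stated without proof as the analogue, for the pair $\{\wt B_0,\wt B_1\}$, of the Kre\u\i n--Ovcharenko uniqueness result for $Q_\mu$/$Q_M$-functions, i.e. the standard $\gamma$-field/Gram-matrix construction of $U$ from $\wt Q_0(\lambda)-\wt Q_0^*(\mu)=(\lambda-\bar\mu)\Gamma^*(\mu)\Gamma(\lambda)$ together with $\sN$-minimality coming from simplicity. Your extra step of recovering $(\wt B_1-\wt B_0)\uphar\sN$ from the $\lambda\to\infty$ asymptotics of $\wt Q_0$ (legitimate, since $\ker(\wt B_1-\wt B_0)=\dom B$ makes $\ran\bigl((\wt B_1-\wt B_0)\uphar\sN\bigr)^{1/2}$ dense in $\sN$), which forces $U\uphar\sN=I_\sN$ and hence lets the single unitary intertwine $B$, $\wt B_0$ and $\wt B_1$ simultaneously, is precisely what is needed and is carried out correctly.
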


We also recall another statement which concerns the compressed
resolvent
$$Q_{\wt B}(\lambda):= P_\sN(\wt B-\lambda I)^{-1}\uphar\sN$$
associated to a selfadjoint contraction $\wt B$ and which can also
be found from \cite{AHS11}.

\begin{proposition}
\label{PPP} Let $\wt B$ be a selfadjoint contraction in the Hilbert
space $\sH$, and let $\sN\subseteq\sH$. Suppose that $\wt B$ is
$\sN$-minimal, i.e. $\sH=\cspan \{\,(\wt B-\lambda I)^{-1} \sN :\,
|\lambda|>1\,\}$. Then the following conditions are equivalent:
\begin{enumerate}
\def\labelenumi{\rm (\roman{enumi})}
\item $\sN=\sH$;
\item the operator-valued function $Q^{-1}_{\wt B}(\lambda)+\lambda I$ is constant.
\end{enumerate}
\end{proposition}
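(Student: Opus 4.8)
The plan is to compute $Q_{\wt B}(\lambda)$ when $\sN=\sH$ and, conversely, to show that the condition on $Q^{-1}_{\wt B}(\lambda)+\lambda I$ being constant forces $\sN$ to be all of $\sH$ via the $\sN$-minimality hypothesis. First I would treat the easy direction (i)~$\Rightarrow$~(ii): if $\sN=\sH$, then $P_\sN=I_\sH$ and $Q_{\wt B}(\lambda)=(\wt B-\lambda I)^{-1}$, so $Q^{-1}_{\wt B}(\lambda)=\wt B-\lambda I$ and therefore $Q^{-1}_{\wt B}(\lambda)+\lambda I=\wt B$ is indeed constant (independent of $\lambda$).

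For the converse (ii)~$\Rightarrow$~(i), suppose $Q^{-1}_{\wt B}(\lambda)+\lambda I=T$ for some fixed bounded operator $T\in\bL(\sN)$, i.e. $Q_{\wt B}(\lambda)=(T-\lambda I_\sN)^{-1}$ for all $|\lambda|>1$. The strategy is to show that the identity $P_\sN(\wt B-\lambda I)^{-1}\uphar\sN=(T-\lambda I_\sN)^{-1}$ implies that $\sN$ is \emph{invariant} under $\wt B$, so that $\wt B$ admits $\sN$ as a reducing subspace; the $\sN$-minimality condition $\sH=\cspan\{(\wt B-\lambda I)^{-1}\sN:\ |\lambda|>1\}$ then forces $\sN=\sH$. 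The key algebraic step is the resolvent identity: write $R(\lambda)=(\wt B-\lambda I)^{-1}$ and use
\[
 R(\lambda)-R(\mu)=(\lambda-\mu)R(\lambda)R(\mu),\qquad |\lambda|,|\mu|>1.
\]
Compressing to $\sN$ and inserting $I=P_\sN+P_{\sN^\perp}$ between $R(\lambda)$ and $R(\mu)$ gives
\[
 Q_{\wt B}(\lambda)-Q_{\wt B}(\mu)=(\lambda-\mu)\big(Q_{\wt B}(\lambda)Q_{\wt B}(\mu)+P_\sN R(\lambda)P_{\sN^\perp}R(\mu)\uphar\sN\big).
\]
On the other hand, the function $\lambda\mapsto(T-\lambda I_\sN)^{-1}$ satisfies its \emph{own} resolvent identity with no extra term:
\[
 (T-\lambda I)^{-1}-(T-\mu I)^{-1}=(\lambda-\mu)(T-\lambda I)^{-1}(T-\mu I)^{-1}.
\]
Subtracting and using $Q_{\wt B}(\lambda)=(T-\lambda I)^{-1}$ yields $P_\sN R(\lambda)P_{\sN^\perp}R(\mu)\uphar\sN=0$ for all admissible $\lambda,\mu$. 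Since $\{R(\mu)h:\ |\mu|>1,\ h\in\sN\}$ spans $\sH$ by $\sN$-minimality, this forces $P_\sN R(\lambda)P_{\sN^\perp}=0$, i.e. $R(\lambda)$ maps $\sN^\perp$ into $\sN^\perp$ for every $|\lambda|>1$. Taking adjoints and using selfadjointness of $\wt B$ (hence of $R(\lambda)$ for real $\lambda$, and the symmetry $R(\lambda)^*=R(\bar\lambda)$ in general) one gets that $R(\lambda)$ leaves $\sN$ invariant as well; consequently $\wt B=R(\lambda)^{-1}+\lambda I$ leaves $\sN$ invariant, and being selfadjoint it is reduced by $\sN$. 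Then $(\wt B-\lambda I)^{-1}\sN\subseteq\sN$ for all $|\lambda|>1$, so $\cspan\{(\wt B-\lambda I)^{-1}\sN:\ |\lambda|>1\}\subseteq\sN$, and $\sN$-minimality gives $\sH\subseteq\sN$, hence $\sN=\sH$.

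The main obstacle I anticipate is the passage from ``$P_\sN R(\lambda)P_{\sN^\perp}=0$ for all $|\lambda|>1$'' to ``$\sN^\perp$ is invariant \emph{and} $\sN$ is invariant under $\wt B$''; one must be careful that invariance of $\sN^\perp$ under all resolvents $R(\lambda)$ (with $\lambda$ not necessarily real) together with selfadjointness really does give a reducing subspace, and hence invariance of $\wt B$ itself. This is a standard spectral-theoretic fact but deserves a clean one-line justification: $\sN^\perp$ reduces $R(\lambda_0)$ for one real $\lambda_0$ with $|\lambda_0|>1$, whence $\sN^\perp$ reduces the bounded selfadjoint operator $R(\lambda_0)$, hence reduces every operator commuting with it and in particular $\wt B=R(\lambda_0)^{-1}+\lambda_0 I$. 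Everything else is routine resolvent bookkeeping.
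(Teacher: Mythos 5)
Your proposal is correct, and in fact there is nothing in the paper to compare it with: Proposition~\ref{PPP} is only recalled there, with the proof delegated to \cite{AHS11}, so your write-up supplies a self-contained argument. The direction (i)~$\Rightarrow$~(ii) is immediate, as you say, and your main computation for (ii)~$\Rightarrow$~(i) is sound: comparing
\[
 Q_{\wt B}(\lambda)-Q_{\wt B}(\mu)=(\lambda-\mu)\bigl(Q_{\wt B}(\lambda)Q_{\wt B}(\mu)
 +P_\sN(\wt B-\lambda I)^{-1}P_{\sN^\perp}(\wt B-\mu I)^{-1}\uphar\sN\bigr)
\]
with the exact resolvent identity for $(T-\lambda I)^{-1}$ kills the cross term; then $\sN$-minimality and boundedness give $P_\sN(\wt B-\lambda I)^{-1}P_{\sN^\perp}=0$ for all $|\lambda|>1$, taking adjoints with $\bigl((\wt B-\lambda I)^{-1}\bigr)^*=(\wt B-\bar\lambda I)^{-1}$ gives $(\wt B-\lambda I)^{-1}\sN\subseteq\sN$ for all $|\lambda|>1$, and minimality yields $\sN=\sH$.

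Two small remarks. First, to read (ii) as $Q_{\wt B}(\lambda)=(T-\lambda I_\sN)^{-1}$ with $T\in\bL(\sN)$ you should note that $Q_{\wt B}(\lambda)$ is automatically boundedly invertible for $|\lambda|>1$: for real $\lambda$ the compressed resolvent is sign-definite and bounded away from $0$ (e.g.\ $Q_{\wt B}(\lambda)\le -(\lambda+1)^{-1}I_\sN$ for $\lambda>1$), and for non-real $\lambda$ its imaginary part is definite; this is implicit in the statement but worth a line. Second, the closing detour through ``$\sN$ reduces $\wt B$'' is unnecessary, and the justification offered there --- that a reducing subspace of $R(\lambda_0)$ ``reduces every operator commuting with it'' --- is not a valid general principle (every subspace reduces a multiple of the identity); the correct one-liner, if you want that statement, is that $P_\sN$ commutes with $R(\lambda_0)$ and hence with $R(\lambda_0)^{-1}=\wt B-\lambda_0 I$. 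Since you have already obtained $(\wt B-\lambda I)^{-1}\sN\subseteq\sN$ directly from the adjoint step, that whole passage can simply be deleted without affecting the proof.
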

Since
\[
\begin{array}{l}
\left[(\wt B_1-\wt B_0)^{1/2}(\wt B_0-\lambda I)^{-1}(\wt B_1-\wt B_0)^{1/2}+I\right] \uphar{\sN}\\
\qquad=\left[(\wt B_1-\wt B_0)^{1/2}Q_{\wt B_0}(\lambda)(\wt B_1-\wt
B_0)^{1/2}+I\right] \uphar{\sN},
\end{array}
\]
one can apply Proposition \ref{PPP} and see that it is possible that
$\dom B=\{0\}$ in Theorem \ref{Theor}. The example of such a
situation is provided by the pair of operators $\{\wt Z_0,\wt Z_1\}$
in $\sH$ constructed in Subsection \ref{spcontr1} and the
corresponding functions satisfy
\[
\begin{array}{l}
\wt Q_0(\lambda)= (\wt Z_1-\wt Z_0)^{1/2}(\wt Z_0-\lambda
I)^{-1}(\wt Z_1-\wt Z_0)^{1/2}+I_\sH\in \sS_\mu(\sH),\\
\wt Q_1(\lambda)=-\wt Q_0^{-1}(\lambda)= (\wt Z_1-\wt Z_0)^{1/2}(\wt
Z_1-\lambda I)^{-1}(\wt Z_1-\wt Z_0)^{1/2}-I_\sH\in \sS_M(\sH).
\end{array}
\]
As an addition to \cite{AHS} the following statement will now be
proved.

\begin{theorem}
\label{nden} Let $B$ be a Hermitian contraction in $\sH$ with $\dom
B=\sH_0\subset \sH$. Suppose $\ker(I+B)=\{0\}$. Let the pair $\{\wt
B_0,\wt B_1\}$ of $sc$-extensions of $B$ satisfy the equivalent
conditions in Proposition~\ref{new0}. Then the following conditions
are equivalent:
\begin{enumerate}
\def\labelenumi{\rm (\roman{enumi})}
\item
$s-\lim\limits_{\lambda\uparrow -1}(\lambda+1)\wt Q_0(\lambda)=0$;
\item
$ \lim\limits_{\lambda\uparrow -1}\cfrac{(\wt
Q_1(\lambda)f,f)}{1+\lambda}=-\infty,\; f\in\sN\setminus\{0\};$
\item $\ker(I+\wt B_0)=\{0\}.$
\end{enumerate}
\end{theorem}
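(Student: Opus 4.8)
The plan is to analyze the behavior of $\wt Q_0(\lambda)$ as $\lambda\uparrow -1$ using the explicit formula \eqref{q1}, rewritten in terms of the operator $W^*$ coming from Proposition~\ref{new0}. Recall from the proof of Proposition~\ref{new0} (see \eqref{w2}, \eqref{parsum2}) that under the hypothesis of that proposition one has
\[
 \wt B_1-\wt B_0=(I+\wt B_1)^{1/2}P(I+\wt B_1)^{1/2},\qquad
 I+\wt B_0=(I+\wt B_1)^{1/2}(I-P)(I+\wt B_1)^{1/2},
\]
where $P=\ker W^*$ is an orthogonal projection in $\cran(I+\wt B_1)$. First I would substitute $\lambda=-1+\varepsilon$ (with $\varepsilon\downarrow 0$) into \eqref{q1} and write
\[
 (\lambda+1)\wt Q_0(\lambda)=\varepsilon\left[(\wt B_1-\wt B_0)^{1/2}(\wt B_0+I-\varepsilon I)^{-1}(\wt B_1-\wt B_0)^{1/2}+I\right]\uphar\sN,
\]
so that the question reduces to the strong limit of $\varepsilon(\wt B_0+I-\varepsilon I)^{-1}$ compressed between the factors $(\wt B_1-\wt B_0)^{1/2}$. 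By the spectral theorem, $s\text{-}\lim_{\varepsilon\downarrow 0}\varepsilon(\wt B_0+I-\varepsilon I)^{-1}=P_{\ker(I+\wt B_0)}$, the orthogonal projection onto $\ker(I+\wt B_0)$. Hence (i) holds if and only if $(\wt B_1-\wt B_0)^{1/2}P_{\ker(I+\wt B_0)}(\wt B_1-\wt B_0)^{1/2}\uphar\sN=0$, which after a short argument (using that $\sN$ is the defect subspace and $\ker(\wt B_1-\wt B_0)=\dom B=\sH_0$) is equivalent to $\ker(I+\wt B_0)\cap\overline{\ran}\,(\wt B_1-\wt B_0)^{1/2}=\{0\}$.

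Next I would identify $\ker(I+\wt B_0)$ inside the decomposition above: since $I+\wt B_0=(I+\wt B_1)^{1/2}(I-P)(I+\wt B_1)^{1/2}$ and $\ker(I+\wt B_1)$ is handled via the assumption $\ker(I+B)=\{0\}$ together with the structure of $sc$-extensions, a vector $u$ lies in $\ker(I+\wt B_0)$ precisely when $(I+\wt B_1)^{1/2}u\in\ker(I-P)^{\perp}\cap\cdots$; more efficiently, one shows $\ker(I+\wt B_0)=(I+\wt B_1)^{1/2}\big(P(\cran(I+\wt B_1))\big)\cap\ker(I+\wt B_1)$-type relations reduce the claim to: $\ker(I+\wt B_0)=\{0\}$ iff $\ran(I+\wt B_1)^{1/2}P=(I+\wt B_1)^{1/2}\ker W^*$ meets $\ker(I+\wt B_1)$ trivially. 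Comparing with \eqref{parsum3}, $\ran(\wt B_1-\wt B_0)^{1/2}=(I+\wt B_1)^{1/2}\ker W^*$, so the condition (iii) $\ker(I+\wt B_0)=\{0\}$ becomes exactly the triviality of the intersection appearing in the analysis of (i). This gives the equivalence (i)$\Leftrightarrow$(iii). Finally, (ii)$\Leftrightarrow$(i) follows from the inversion identity \eqref{inv11}, $\wt Q_1(\lambda)=-\wt Q_0^{-1}(\lambda)$: dividing by $1+\lambda$ and using that a positive Herglotz function tending strongly to $0$ like $(1+\lambda)$ has inverse blowing up like $(1+\lambda)^{-1}$ in the quadratic-form sense, one translates the strong-zero statement for $(\lambda+1)\wt Q_0$ into the $-\infty$ statement for $(\wt Q_1(\lambda)f,f)/(1+\lambda)$; the monotonicity in $\lambda$ of these scalar functions (they are Herglotz) makes the limits exist in $[-\infty,0)$ and the reciprocal relation is then routine.

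The main obstacle I expect is the careful bookkeeping in the second step: correctly identifying $\ker(I+\wt B_0)$ in terms of the projection $P=\ker W^*$ and the subspace $\ker(I+\wt B_1)$, and verifying that compressing to $\sN$ does not lose or create kernel. The assumption $\ker(I+B)=\{0\}$ is precisely what guarantees $\ker(I+\wt B_1)$ behaves well (it forces $\ker(I+\wt B_1)\subseteq\sN$, by the shorted-operator identity $(I+\wt B_1)_\sN=\wt B_1-B_\mu$ from \eqref{novrav} together with $B_\mu\ge -I$ strictly on $\dom B$), and one must use this to move freely between the full-space kernels and their compressions to $\sN$. Once this identification is in place, the equivalences drop out of \eqref{parsum3}, the spectral-theorem limit, and \eqref{inv11}.
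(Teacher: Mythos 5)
Your opening move coincides with the paper's: as $\lambda\uparrow-1$ the strong limit of $(\lambda+1)(\wt B_0-\lambda I)^{-1}$ is $-P_{\ker(I+\wt B_0)}$, so (i) holds iff $(\wt B_1-\wt B_0)^{1/2}P_{\ker(I+\wt B_0)}(\wt B_1-\wt B_0)^{1/2}\uphar\sN=0$. But from here your plan goes astray. Vanishing of this compression is equivalent to the \emph{orthogonality} $\ker(I+\wt B_0)\perp\cran(\wt B_1-\wt B_0)^{1/2}=\sN$, i.e.\ to $\ker(I+\wt B_0)\subseteq\dom B$, not merely to the trivial intersection you state; and once one has $\ker(I+\wt B_0)\subseteq\dom B$, the hypothesis $\ker(I+B)=\{0\}$ together with $\wt B_0\supseteq B$ gives (iii) in one line (if $g\in\ker(I+\wt B_0)\cap\dom B$ then $(I+B)g=0$), which is exactly how the paper closes (i)$\Leftrightarrow$(iii). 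Your detour through $I+\wt B_0=(I+\wt B_1)^{1/2}(I-P)(I+\wt B_1)^{1/2}$ is not only unnecessary but incorrect as written: since $\ran(I+\wt B_1)^{1/2}\perp\ker(I+\wt B_1)$, the condition that $(I+\wt B_1)^{1/2}\ker W^*$ meets $\ker(I+\wt B_1)$ trivially holds automatically, so it cannot characterize (iii). (From the factorization the correct statement is $\ker(I+\wt B_0)=\{u:\,(I+\wt B_1)^{1/2}u\in\ker W^*\}$, so (iii) amounts to $\ker(I+\wt B_1)=\{0\}$ together with $\ran(I+\wt B_1)^{1/2}\cap\ker W^*=\{0\}$ --- a different condition.) The auxiliary claim that $\ker(I+B)=\{0\}$ forces $\ker(I+\wt B_1)\subseteq\sN$ is also unsubstantiated. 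A small further slip: $\lambda\uparrow-1$ approaches $-1$ from the left, so the substitution should be $\lambda=-1-\varepsilon$, $\varepsilon\downarrow0$; your $\lambda=-1+\varepsilon$ lies in $(-1,1)$, where $\wt Q_0$ is not defined.

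The second genuine gap is the treatment of (ii). From $\wt Q_1(\lambda)=-\wt Q_0^{-1}(\lambda)$ and the generalized Cauchy--Schwarz inequality for the positive operator $\wt Q_0(\lambda)$ one gets $\|f\|^4\le(\wt Q_0(\lambda)f,f)\,\bigl(-(\wt Q_1(\lambda)f,f)\bigr)$, which yields (i)$\Rightarrow$(ii) exactly as in the paper; but the converse is not ``routine'': pointwise blow-up of the quadratic form of the inverse gives only a lower bound on $(\wt Q_0(\lambda)f,f)$, never the upper bound needed for $s$-$\lim(\lambda+1)\wt Q_0(\lambda)=0$, and monotonicity of the scalar Herglotz functions does not repair this. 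The paper closes the circle by proving (ii)$\Rightarrow$(iii) directly: writing $(\wt B_1-\wt B_0)^{1/2}\uphar\sN=(I+\wt B_1)^{1/2}\cV$ with $\cV$ isometric (from $\wt B_1-\wt B_0=(I+\wt B_1)^{1/2}P(I+\wt B_1)^{1/2}$), one computes $\left|(\wt Q_1(\lambda)f,f)/(1+\lambda)\right|=\|(\wt B_1-\lambda I)^{-1/2}\cV f\|^2$, so (ii) is equivalent to $\ran\cV\cap\ran(I+\wt B_1)^{1/2}=\{0\}$, i.e.\ to $\ran(\wt B_1-\wt B_0)^{1/2}\cap\ran(I+\wt B_1)=\{0\}$, and then the identity $(I+\wt B_1)g=(\wt B_1-\wt B_0)g$ for $g\in\ker(I+\wt B_0)$, combined with $\ker(\wt B_1-\wt B_0)=\dom B$ and $\ker(I+B)=\{0\}$, gives (iii). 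Some argument of this kind is missing from your proposal, so as it stands the cycle of implications is not closed.
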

\begin{proof}
Using \eqref{q1} together with the following well-known relations
for a nonnegative selfadjoint operator $G$
\[
\lim\limits_{y\uparrow 0}y(G-yI)^{-1}f=\left\{\begin{array}{l} 0,\quad\mbox{if}\; f\in\cran G\\
-f,\quad\mbox{if}\; f\in\ker G
\end{array}
\right.,
\]
and the identity $\cran(\wt B_1-\wt B_0)^{1/2}=\sN$ we get that
\[
(i)\iff\sN\subseteq\cran(I+\wt B_0).
\]
On the other hand, using the equivalence $\sN\subseteq\cran(I+\wt
B_0)\iff \dom B\supseteq\ker(I+ \wt B_0),$ the condition
$\ker(I+B)=\{0\}$, and the fact that $\wt B_0$ is a $sc$-extension
of $B$, we have
\[
(i)\iff (iii).
\]
Due to the equality
$$\wt Q_1(\lambda)=-\wt Q_0^{-1}(\lambda),\quad \lambda\in\Ext[-1,1],$$
we get with $\lambda<-1$
\[
\begin{array}{l}
||f||^2=\left(\wt Q_0(\lambda)f, -\wt Q_1(\lambda)\right)
 \le\sqrt{\left(\wt Q_0(\lambda)f,f\right)}\,\sqrt{\left(\wt Q_0(\lambda)\wt Q_1(\lambda)f,\wt Q_1(\lambda)f\right)}\\
 \hspace{9mm} =\sqrt{\left(\wt Q_0(\lambda)f,f\right)}\,\sqrt{-\left(f,\wt
Q_1(\lambda)f\right)}.
\end{array}
\]
It follows that
\[
-\left(f,\wt Q_1(\lambda)f\right)\ge \cfrac{||f||^4}{\left(\wt
Q_0(\lambda)f,f\right)},\quad \lambda<-1.
\]
Hence (i) $\Rightarrow$ (ii).

Next suppose that (ii) holds true. Since $\wt B_1-\wt B_0=(I+\wt
B_1)^{1/2}P(I+\wt B_1)^{1/2}$, where $P$ is an orthogonal projection
(see Proposition~\ref{new0}, \eqref{parsum2}), we get that
\begin{equation}
\label{DFV} (\wt B_1-\wt B_0)^{1/2}f=(I+\wt B_1)^{1/2}\cV f,\;
f\in\cran (\wt B_1-\wt B_0)^{1/2}=\sN
\end{equation}
where $\cV$ is an isometry from $\sN(=\cran(\wt B_1-\wt B_0)^{1/2})$
into $\cran(I+\wt B_1).$ With $\lambda<-1$ one obtains
\begin{multline*}
\left(\wt Q_1(\lambda)f,f\right)=\left((\wt B_1-\lambda I)^{-1}(I+\wt B_1)\cV f,\cV f\right)-||f||^2\\
= -(1+\lambda)\left((\wt B_1-\lambda I)^{-1}\cV f,\cV f\right),\;
f\in\sN.
\end{multline*}
Therefore
\[
\cfrac{(\wt Q_1(\lambda)f,f)}{1+\lambda}=-||(\wt B_1-\lambda
I)^{-1/2}\cV f||^2.
\]
One concludes that
$$ (ii)\iff\ran\cV\cap\ran(I+\wt B_1)^{1/2}=\{0\}.$$
From the definition of the isometry $\cV$ in \eqref{DFV} we have
\[
\ran\cV\cap\ran(I+\wt B_1)^{1/2}=\{0\}\iff \ran(I+\wt
B_1)\cap\ran(\wt B_1-\wt B_0)^{1/2}=\{0\}.
\]
With $g\in\ker (I+\wt B_0)$ the equality
\[
I+\wt B_1=I+\wt B_0+(\wt B_1-\wt B_0)
\]
yields the identity $(I+\wt B_1)g=(\wt B_1-\wt B_0)g$. Thus, (ii)
$\Rightarrow$ (iii). The proof is complete.
\end{proof}

\section{$Q$-functions of a nonnegative symmetric operator corresponding to the special pairs of
nonnegative selfadjoint extensions} Let $S$ be a closed nonnegative
symmetric operator, which is in general nondensely defined. It is
assumed that $S$ admits disjoint nonnegative selfadjoint operator
extensions. In the case of nondensely defined $S$ this yields, in
particular, that $S_{\rm{K}}$ is an operator (i.e. it has no
multi-valued part).

Let the linear fractional transformation $B$ of $S$ be defined by
\[
B:=(I-S)(I+S)^{-1}.
\]
Since $S_{\rm{F}}\cap S_{\rm{K}}= S$, we get $\ker (B_M-B_\mu)=\dom
B$. Consider two nonnegative selfadjoint operator extensions $\wt
S_0$ and $\wt S_1$ of $S$ given by
\[
\wt S_k=(I-\wt B_k)(I+\wt B_k)^{-1},\; k=0,1,
\]
where the pair of $sc$-extensions $\{\wt B_0,\wt B_1\}$ satisfies
the condition \eqref{MAIN}. Notice that
\[
\wt B_1 - \wt B_0=2\left((\wt S_1+I)^{-1}-(\wt S_0+I)^{-1}\right)
\]

Next introduce the so-called $\gamma$-fields by the formulas
\[
\begin{array}{l}
\dC\setminus\dR_+\ni\lambda\mapsto\gamma_0(\lambda):=\left(I+(\lambda+1)(\wt
S_0-\lambda
I)^{-1}\right)(\wt B_1 - \wt B_0)^{1/2}\uphar\sN\in\bL(\sN,\sH),\\
\dC\setminus\dR_+\ni\lambda\mapsto\gamma_1(\lambda):=\left(I+(\lambda+1)(\wt
S_1-\lambda I)^{-1}\right)(\wt B_1 - \wt
B_0)^{1/2}\uphar\sN\in\bL(\sN,\sH).
\end{array}
\]
Then define
\begin{equation}
\label{qf00}
\wt \cQ_0(\lambda)=-I_{\sN}+\frac{\lambda+1}{2}(\wt B_1 - \wt
B_0)^{1/2}\left(I+(\lambda+1)(\wt S_0-\lambda I)^{-1}\right)(\wt B_1
- \wt B_0)^{1/2}\uphar\sN,\;\lambda\in\dC\setminus\dR_+,
\end{equation}
\begin{equation}
\label{qf01} \wt \cQ_1(\lambda)=I_{\sN}+\frac{\lambda+1}{2}(\wt B_1
- \wt B_0)^{1/2}\left(I+(\lambda+1)(\wt S_1-\lambda
I)^{-1}\right)(\wt B_1 - \wt
B_0)^{1/2}\uphar\sN,\;\lambda\in\dC\setminus\dR_+
\end{equation}
If $\wt B=(I-\wt S)(I+\wt S)^{-1}$ is the linear fractional
transformation of a nonnegative selfadjoint operator, then its
resolvent can be expressed in the form
\begin{equation}
\label{preobras} (\wt B-\mu
I)^{-1}=-\frac{1}{1+\mu}\left(I+\frac{2}{1+\mu}\left(\wt
S-\frac{1-\mu}{1+\mu}\, I\right)^{-1}\right).
\end{equation}
It follows that
\begin{equation}
\label{cq01}
\begin{array}{l}
\wt \cQ_0(\lambda)=-\left(I_{\sN}+(\wt B_1-\wt B_0)^{1/2}\left(\wt
 B_0-\frac{1-\lambda}{1+\lambda}\, I_\sH\right)^{-1}(\wt B_1-\wt B_0)^{1/2}\right)\uphar\sN=-\wt Q_0\left(\frac{1-\lambda}{1+\lambda}\right),\\
 \wt\cQ_1(\lambda)=-\left(-I_{\sN}+(\wt B_1-\wt B_0)^{1/2}\left(\wt
 B_1-\frac{1-\lambda}{1+\lambda}\, I_\sH\right)^{-1}(\wt B_1-\wt B_0)^{1/2}\right)\uphar\sN=-\wt Q_1\left(\frac{1-\lambda}{1+\lambda}\right),\\
  \end{array}
\end{equation}
where the functions $\wt Q_0$ and $\wt Q_1$ are given by \eqref{q1}
and \eqref{q2} with $\lambda\in\dC\setminus\dR_+$.
From \eqref{cq01} and \eqref{znak} it follows that
\[
\lambda\in
(-\infty,0)\Rightarrow\wt\cQ_0(\lambda)<0,\;\wt\cQ_1(\lambda)>0.
\]

\begin{definition}
\label{klass0} Let $\cH$ be a separable Hilbert space. Then denote
by $\sS_{\rm{F}}(\cH)$ the class of Herglotz-Nevanlinna
$\bL(\cH)$-valued functions $\cM(\lambda)$ holomorphic on
$\dC\setminus\dR_+$ and possessing the properties
\begin{enumerate}
\item $\cM^{-1}(\lambda)\in\bL(\cH)$ for all $\lambda\in\dC\setminus\dR_+,$
\item $s-\lim\limits_{x\uparrow 0} \cM(x)=0,$
\item $\lim\limits_{x\downarrow-\infty}(\cM(x)g,g)_\cH=-\infty$ for each $g\in\cH\setminus\{0\},$
\item
 $s-\lim\limits_{x\downarrow -\infty}x^{-1}\cM(x)=0$.
\end{enumerate}
\end{definition}
\begin{definition}
\label{klass1} Let $\cH$ be a separable Hilbert space. Then denote
by $\sS_{\rm{K}}(\cH)$ the class of Herglotz-Nevanlinna
$\bL(\cH)$-valued functions $\cN(\lambda)$ holomorphic on
$\dC\setminus\dR_+$ and possessing the properties
\begin{enumerate}
\item $\cN^{-1}(\lambda)\in\bL(\cH)$ for all $\lambda\in\dC\setminus\dR_+,$
\item $\lim\limits_{x\uparrow 0}(\cN(x)g,g)_\cH=+\infty$ for each $g\in\cH\setminus\{0\},$
\item $s-\lim\limits_{x\downarrow-\infty}\cN(x)=0$,
\item
 $\lim\limits_{x\downarrow -\infty}x(\cN(x)g,g)_\cH=-\infty$ for each $g\in\cH\setminus\{0\}.$
\end{enumerate}
\end{definition}

Clearly, the class $\sS_{\rm{F}}(\cH)$ is a subset of the inverse
Stieltjes class and $\sS_{\rm{K}}(\cH)$ is subset of the
 Stieltjes class of $\bL(\cH)$-valued functions  \cite{KacK}.

\begin{theorem}
\label{adth}
The function $\wt \cQ_0$ belongs to the class $\sS_{\rm{F}}(\sN)$,
while the function $\wt\cQ_1$ belongs to the class
$\sS_{\rm{K}}(\sN)$ and
$$\wt \cQ_0(\lambda)\wt\cQ_1(\lambda)=\wt\cQ_1(\lambda)\wt\cQ_0(\lambda)=-I_\sN$$
for each $\lambda\in\dC\setminus\dR_+$.
\end{theorem}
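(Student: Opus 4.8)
The plan is to derive everything from the already established relations between $\wt\cQ_0$, $\wt\cQ_1$ and the Kre\u\i n--Ovcharenko type functions $\wt Q_0$, $\wt Q_1$ of the Hermitian contraction $B=\cC(S)$, using the change of variable $\mu=\frac{1-\lambda}{1+\lambda}$. First I would record that this fractional-linear substitution is a bijection from $\dC\setminus\dR_+$ onto $\Ext[-1,1]$ which maps $x\uparrow 0$ to $\mu\downarrow 1$ and $x\downarrow -\infty$ to $\mu\uparrow -1$, and is holomorphic with holomorphic inverse; hence $\wt\cQ_0(\lambda)=-\wt Q_0(\mu)$ and $\wt\cQ_1(\lambda)=-\wt Q_1(\mu)$ from \eqref{cq01} immediately give holomorphy of $\wt\cQ_0,\wt\cQ_1$ on $\dC\setminus\dR_+$, and the Herglotz--Nevanlinna property is inherited (the substitution $\lambda\mapsto\mu$ maps the open upper/lower half planes to themselves up to the sign absorbed in the minus sign, so one should check the sign of $\IM$ carefully — this is the one genuinely routine-but-must-be-done check). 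The product relation $\wt\cQ_0(\lambda)\wt\cQ_1(\lambda)=\wt\cQ_1(\lambda)\wt\cQ_0(\lambda)=-I_\sN$ follows at once from \eqref{inv11}, i.e. from $\wt Q_0(\mu)\wt Q_1(\mu)=-I_\sN$, together with $(-\wt Q_0)(-\wt Q_1)=\wt Q_0\wt Q_1=-I_\sN$; this also yields condition (1) (invertibility in $\bL(\sN)$) in both Definitions \ref{klass0} and \ref{klass1}.

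Next I would translate the four boundary/growth conditions. Conditions (2), (3) for $\wt\cQ_0$ and (2), (3) for $\wt\cQ_1$: from $\wt\cQ_0(x)=-\wt Q_0(\mu)$ with $x\uparrow 0\Leftrightarrow\mu\downarrow 1$ we get $s\text{-}\lim_{x\uparrow 0}\wt\cQ_0(x)=-\,s\text{-}\lim_{\mu\downarrow 1}\wt Q_0(\mu)=0$ by the first displayed limit in \eqref{PROP}; from $x\downarrow-\infty\Leftrightarrow\mu\uparrow-1$ we get $\lim_{x\downarrow-\infty}(\wt\cQ_0(x)g,g)=-\lim_{\mu\uparrow-1}(\wt Q_0(\mu)g,g)=-\infty$ by the second limit in \eqref{PROP}. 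Dually, $\lim_{x\uparrow0}(\wt\cQ_1(x)g,g)=-\lim_{\mu\downarrow1}(\wt Q_1(\mu)g,g)=+\infty$ and $s\text{-}\lim_{x\downarrow-\infty}\wt\cQ_1(x)=-\,s\text{-}\lim_{\mu\uparrow-1}\wt Q_1(\mu)=0$ from \eqref{PROP1}. So far none of this needs the special pair conditions beyond what is baked into \eqref{PROP}, \eqref{PROP1}.

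The remaining condition (4) in each definition — $s\text{-}\lim_{x\downarrow-\infty}x^{-1}\wt\cQ_0(x)=0$ and $\lim_{x\downarrow-\infty}x(\wt\cQ_1(x)g,g)=-\infty$ — I expect to be the main obstacle, since it concerns the precise rate at $x\to-\infty$, equivalently the behaviour of $\wt Q_0(\mu)$, $\wt Q_1(\mu)$ as $\mu\uparrow-1$, and it is exactly here that Theorem \ref{nden} is designed to be used: as $x\to-\infty$ one has $1+\lambda\to -\infty^{-1}$... more precisely $\mu+1=\frac{2}{1+\lambda}$, so $x\to-\infty$ forces $1+\lambda=1+x\to-\infty$ and $\mu+1\to 0^-$, with $x=\frac{1-\mu}{1+\mu}$ giving $x(\mu+1)\to -2$ as $\mu\to-1$. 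Thus $x^{-1}\wt\cQ_0(x)=-x^{-1}\wt Q_0(\mu)$ and $x^{-1}\sim -\tfrac12(\mu+1)$, so condition (4) for $\wt\cQ_0$ becomes $s\text{-}\lim_{\mu\uparrow-1}(\mu+1)\wt Q_0(\mu)=0$, which is precisely condition (i) of Theorem \ref{nden}; likewise condition (4) for $\wt\cQ_1$ becomes $\lim_{\mu\uparrow-1}\frac{(\wt Q_1(\mu)g,g)}{\mu+1}=-\infty$, precisely condition (ii) of Theorem \ref{nden}. Hence I would verify the hypotheses of Theorem \ref{nden}: $B=\cC(S)$ satisfies $\ker(I+B)=\{0\}$ because $S$ is an operator (this is noted at the start of the section — disjointness forces $S_{\rm K}$, hence $S$, to have no multivalued part, so $(I+S)^{-1}$ is everywhere the resolvent of an operator and $\ker(I+B)=\mul S=\{0\}$), and the pair $\{\wt B_0,\wt B_1\}$ satisfies \eqref{MAIN}, hence in particular the equivalent conditions of Proposition \ref{new0}. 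It remains to check condition (iii) of Theorem \ref{nden}, namely $\ker(I+\wt B_0)=\{0\}$: since $\wt S_0$ is assumed to be a nonnegative selfadjoint \emph{operator}, $\mul\wt S_0=\{0\}$, and $\ker(I+\wt B_0)=\ker(I+\wt B_0)=\mul\wt S_0=\{0\}$ via the Cayley correspondence \eqref{Cayley}. Therefore Theorem \ref{nden} gives (i) and (ii), which via the substitution translate into condition (4) of Definition \ref{klass0} for $\wt\cQ_0$ and condition (4) of Definition \ref{klass1} for $\wt\cQ_1$, completing the proof. The only care needed is bookkeeping of the two sign flips and the constant $-2$ in $x(\mu+1)\to-2$, which does not affect the limit statements $=0$ or $=-\infty$.
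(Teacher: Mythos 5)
Your proposal is correct and takes essentially the paper's own route: the paper's proof of Theorem \ref{adth} is exactly the citation of \eqref{inv11}, \eqref{PROP}, \eqref{PROP1}, Theorem \ref{nden} and \eqref{cq01}, which you have simply spelled out, including the correct verification of the hypotheses of Theorem \ref{nden} ($\ker(I+B)=\{0\}$ and $\ker(I+\wt B_0)=\{0\}$ because $S$ and $\wt S_0$ are operators). One bookkeeping remark: with $\mu=\frac{1-x}{1+x}$ one has $x(\mu+1)=1-\mu\to 2$, not $-2$, so the faithful translation of condition (4) of Definition \ref{klass1} is $\lim_{\mu\uparrow -1}(1+\mu)^{-1}\left(\wt Q_1(\mu)g,g\right)=+\infty$; this is in fact what the proof of Theorem \ref{nden} establishes, since by \eqref{znak} that quotient is nonnegative for $\mu<-1$ (the $-\infty$ in the statement of Theorem \ref{nden}(ii) is a sign typo), so your sign slip and the paper's typo cancel and the conclusion is unaffected.
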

\begin{proof} The statements follow from \eqref{inv11}, \eqref{PROP}, \eqref{PROP1},
Theorem \ref{nden}, and \eqref{cq01}.
\end{proof}
\begin{theorem}
\label{susche} Let $\cH$ be a separable Hilbert space and let
$\cM\in\sS_{\rm{F}}(\cH)$ ($\cN\in\sS_{\rm{K}}(\cH)$). Then there
exists a Hilbert space $\sH$, containing $\cH$ as a subspace, a
closed simple nonnegative possibly nondensely defined operator $S$
in $\sH$, and a pair $\{\wt S_0,\wt S_1\}$ of nonnegative
selfadjoint operator extensions of $S$, satisfying \eqref{spnon} and
such that
\[
\cM(\lambda)=Y^*\wt\cQ_0(\lambda)Y\qquad
(\cN(\lambda)=Y^*\wt\cQ_1(\lambda)Y), \;
\lambda\in\dC\setminus\dR_+,
\]
where $Y\in\bL(\cH,\cH)$ is an isomorphism and $\wt\cQ_0$
($\wt\cQ_1$) is given by \eqref{qf00} (\eqref{qf01}). If
$\dim\cH<\infty$ or $\dim\cH=\infty$ but $\IM \cM(i)$ ($\IM \cN(i)$)
is positive definite, then $S$ is densely defined and the equalities
$\wt S_0=S_{\rm{F}}$, $\wt S_1=S_{\rm{K}}$ hold true.
\end{theorem}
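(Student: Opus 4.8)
The plan is to reduce the statement about $\cM\in\sS_{\rm{F}}(\cH)$ (resp.\ $\cN\in\sS_{\rm{K}}(\cH)$) to the already-established realization result for Hermitian contractions, namely Theorem~\ref{Theor}, via the Cayley-type substitution $\lambda\mapsto\mu=\frac{1-\lambda}{1+\lambda}$ recorded in \eqref{cq01} and \eqref{preobras}. First I would verify that this fractional-linear change of variable maps $\dC\setminus\dR_+$ bijectively onto $\Ext[-1,1]$ and, crucially, carries the boundary behaviour encoded in Definition~\ref{klass0} onto the boundary behaviour \eqref{PROP} defining the class $\sS_\mu(\cH)$: the limit $x\uparrow 0$ corresponds to $\mu\downarrow 1$, the limit $x\downarrow-\infty$ corresponds to $\mu\uparrow-1$, and a short computation turns condition (4) of Definition~\ref{klass0} ($s$-$\lim_{x\to-\infty}x^{-1}\cM(x)=0$) into the second half of \eqref{PROP} about $\mu\uparrow-1$ (using that $x=\frac{1-\mu}{1+\mu}$ behaves like $\frac{2}{1+\mu}$ near $\mu=-1$). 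Thus the function $Q(\mu):=-\cM\!\left(\frac{1-\mu}{1+\mu}\right)$ lies in $\sS_\mu(\cH)$; symmetrically, for the Stieltjes case one sets $Q(\mu):=-\cN\!\left(\frac{1-\mu}{1+\mu}\right)\in\sS_M(\cH)$, or equivalently applies the inverse (recall $\cN=-\cM^{-1}$ up to the correspondence, cf.\ the final identity in the theorem).

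Next I would invoke Theorem~\ref{Theor}: it yields a Hilbert space $\sH\supset\cH$, a Hermitian contraction $B$ with $\dom B=\sH\ominus\cH$, and a pair $\{\wt B_0,\wt B_1\}$ of $sc$-extensions of $B$ satisfying \eqref{MAIN} (equivalently, by Theorem~\ref{Tahs}, the form conditions \eqref{nonnegpair}) such that $Q(\mu)=\wt Q_0(\mu)$ and $-Q^{-1}(\mu)=\wt Q_1(\mu)$ in the notation of \eqref{q1}--\eqref{q2}. Here I must check that the hypothesis $\ker(I+B)=\{0\}$ needed to pass to operators on the nonnegative side is available: this is guaranteed since $\cM\in\sS_{\rm{F}}(\cH)$ forces, via Theorem~\ref{nden} (conditions (i)$\Leftrightarrow$(iii) there), that $\ker(I+\wt B_0)=\{0\}$, hence a fortiori $\ker(I+B)=\{0\}$. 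Then the Cayley transforms $S:=\cC(B)=(I-B)(I+B)^{-1}$ and $\wt S_k:=(I-\wt B_k)(I+\wt B_k)^{-1}$ are a closed nonnegative (possibly nondensely defined) symmetric operator and a pair of nonnegative selfadjoint operator extensions, and by Theorem~\ref{new} together with Theorem~\ref{Tahs} the pair $\{\wt S_0,\wt S_1\}$ satisfies precisely \eqref{spnon}. Simplicity of $S$ transfers from simplicity of the constructed $B$ (use the $\sN$-minimality characterization quoted before Proposition~\ref{pruniq}); if $B$ is not already simple one restricts to its simple part. Finally, the formula \eqref{cq01} literally identifies $\wt\cQ_0(\lambda)=-\wt Q_0\!\left(\frac{1-\lambda}{1+\lambda}\right)=-Q\!\left(\frac{1-\lambda}{1+\lambda}\right)=\cM(\lambda)$ (and $\wt\cQ_1=\cN$ in the Stieltjes case), so $Y$ can be taken to be the identity $I_\cH$, or an isomorphism $\cH\to\cH$ if one first normalizes $\cM$ by a bounded positive factor to land in the precise range of the realization theorem.

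For the last sentence: if $\dim\cH<\infty$, then $\dim\sN<\infty$ and Theorem~\ref{Theor} already asserts $\wt B_0=B_\mu$, $\wt B_1=B_M$, whence $\wt S_0=\cC(B_\mu)=S_{\rm{F}}$ and $\wt S_1=\cC(B_M)=S_{\rm{K}}$ by the identifications recalled in Section~2; moreover finiteness of defect numbers forces $\dom B\ne\{0\}$, i.e.\ $S$ densely defined (a nondensely defined Hermitian contraction has $\sN\ne\{0\}$ contributing to the defect, and the "$\dom S=\{0\}$" phenomenon requires $\dim\sN=\infty$ as in Subsection~\ref{spcontr1}). In the infinite-dimensional case I would show that the extra assumption that $\IM\cM(i)$ (resp.\ $\IM\cN(i)$) is positive definite forces the $\gamma$-field to be \emph{onto} $\sN_\lambda$ rather than merely dense-range, which in turn forces $\wt B_1-\wt B_0$ to have dense range in all of $\sN$ in a way that collapses the pair to $\{B_\mu,B_M\}$; concretely, $\IM\wt\cQ_0(i)=\gamma_0^*(i)\gamma_0(i)$ up to a positive constant, and positive-definiteness of this together with \eqref{q1} and \eqref{novrav} pins down $\ran(\wt B_0-B_\mu)^{1/2}=\sN=\ran(B_M-\wt B_1)^{1/2}$, hence $\wt B_0=B_\mu$, $\wt B_1=B_M$, and also $\dom B\ne\{0\}$, i.e.\ $S$ densely defined.

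The main obstacle I anticipate is the bookkeeping around \emph{which} boundary-behaviour conditions are genuinely needed versus automatically implied: conditions (1)--(4) of Definitions~\ref{klass0}--\ref{klass1} must be matched one-for-one against \eqref{PROP}/\eqref{PROP1} (plus the normalization at $\infty$), and the asymptotic conditions (4) and (4$'$) — the ones about $xQ(x)$ and $x^{-1}Q(x)$ — are exactly the delicate ones that, as the paper's whole thrust indicates, are \emph{not} automatic in infinite dimensions; getting the limit at $\mu=-1$ to come out as a genuine $+\infty$/$0$ dichotomy rather than something intermediate is precisely the content of Theorem~\ref{nden}, and the proof must route through it carefully rather than treating these limits as formalities. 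The secondary delicate point is the very last clause, where one must argue that positive-definiteness of the imaginary part at $i$ is strong enough to exclude \emph{all} the exotic pairs produced in Section~4, not merely some of them.
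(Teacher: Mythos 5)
Your main construction follows the same route as the paper's proof: pass to $\wt Q_0(z)=-Y^{-1}\cM\!\left(\tfrac{1-z}{1+z}\right)Y^{-1}$ with $Y=(-\cM(-1))^{1/2}$, check that this function lies in $\sS_\mu(\cH)$ and in addition satisfies $s\hbox{-}\lim_{x\uparrow -1}(x+1)\wt Q_0(x)=0$, invoke the realization Theorem~\ref{Theor} to obtain a simple Hermitian contraction $B$ and a pair $\{\wt B_0,\wt B_1\}$ with \eqref{MAIN}, use Theorem~\ref{nden} to get $\ker(I+\wt B_0)=\{0\}$, and Cayley--transform back, identifying $\wt\cQ_0$ through \eqref{preobras}/\eqref{cq01}. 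One caveat on the first step: your unnormalized $Q(\mu)=-\cM\!\left(\tfrac{1-\mu}{1+\mu}\right)$ is in general \emph{not} in $\sS_\mu(\cH)$, because \eqref{PROP} requires the limit $I$ at infinity whereas $Q(\mu)\to -\cM(-1)$; the conjugation by the positive factor $Y$ is not optional (it is precisely why the isomorphism $Y$ appears in the statement), and you treat it only as an aside.

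The genuine defects are in the last clause. First, your justification of dense definedness, ``finiteness of defect numbers forces $\dom B\ne\{0\}$, i.e.\ $S$ densely defined'', is wrong: density of $\dom S=\ran(I+B)$ has nothing to do with $\dom B$ being nonzero; it is equivalent to $S_{\rm F}$ having no multivalued part, i.e.\ to $\ker(I+B_\mu)=\{0\}$ (recall $\mul S_{\rm F}=\sH\ominus\cdom S$), and a nondensely defined $S$ with finite defect numbers is perfectly possible if one drops condition (4) of Definition~\ref{klass0}. The paper's argument is that in the finite-dimensional case (and in the positive-definite case) $\wt B_0=B_\mu$, so the equality $\ker(I+\wt B_0)=\{0\}$ already secured from Theorem~\ref{nden} gives $\ker(I+B_\mu)=\{0\}$ and hence dense definedness; you have this fact in hand but do not use it, and you repeat the incorrect reason at the end of the positive-definite case. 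Second, your sketch for $\dim\cH=\infty$ with $\IM\cM(i)$ positive definite is garbled: the claim that it ``pins down $\ran(\wt B_0-B_\mu)^{1/2}=\sN=\ran(B_M-\wt B_1)^{1/2}$, hence $\wt B_0=B_\mu$, $\wt B_1=B_M$'' is self-contradictory (those ranges must come out equal to $\{0\}$, not $\sN$), and ``dense range of $\wt B_1-\wt B_0$ in $\sN$'' is automatic and yields nothing. The correct mechanism, for which the paper cites \cite[Corollary 6.3]{AHS}, is that bounded invertibility of the imaginary part forces $(\wt B_1-\wt B_0)^{1/2}\uphar\sN$ to be bounded from below, hence $\ran(\wt B_1-\wt B_0)^{1/2}=\sN$ is closed, and then the two intersection conditions in \eqref{MAIN} force $\wt B_0-B_\mu=0$ and $B_M-\wt B_1=0$.
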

\begin{proof} We will prove the statement for $\cM\in\sS_{\rm{F}}(\cH)$. Since the function $\cM$ belongs to the inverse Stieltjes
class, the operator $-\cM(-1)$ is positive definite. Let
$Y=\left(-\cM(-1)\right)^{1/2}$ and define
\[
\wt\cQ_0(\lambda)=Y^{-1}\cM(\lambda) Y^{-1},\;
\lambda\in\dC\setminus\dR_+,
\]
\[
\wt Q_0(z)=-\wt\cQ_0\left(\frac{1-z}{1+z}\right),\;
z\in\dC\setminus[-1,1].
\]
Due to $\cM\in\sS_{\rm{F}}(\cH)$ the function $\wt Q_0$ belongs to
the class $\sS_\mu(\cH)$ and, moreover,
\[
s-\lim\limits_{x\uparrow -1}(x+1)\wt Q_0(x)=0.
\]
By \cite[Theorem 5.1]{AHS} there exists a Hilbert space $\sH$
containing $\cH$ as a subspace, a simple Hermitian contraction $B$
defined on $\dom B=\sH\ominus\cH$ with the property
$\ker(B_M-B_\mu)=\dom B$, and a pair $\{\wt B_0,\wt B_1\}$ of
$sc$-extensions, satisfying \eqref{MAIN} such that
\[
\wt Q_0(z)=\left[(\wt B_1-\wt B_0)^{1/2}(\wt B_0-z I)^{-1}(\wt
B_1-\wt B_0)^{1/2}+I\right] \uphar{\cH},\; z\in\dC\setminus[-1,1].
\]
From Theorem \ref{nden} it follows that $\ker(I+\wt B_0)=\{0\}$.

Now define
\[
S=(I-B)(I+B)^{-1}.
\]
Then $S$ is a closed nonnegative operator, possibly nondensely
defined, and the pair $\{\wt S_0,\wt S_1\}$ of its nonnegative
selfadjoint (operator) extensions defined by
$$\wt
S_k=(I-\wt B_k)(I+\wt B_k)^{-1},\; k=0,1,$$ satisfies conditions
\eqref{spnon}. Finally, \eqref{preobras} implies that the function
\[
-I_{\cH}+\frac{\lambda+1}{2}(\wt B_1 - \wt
B_0)^{1/2}\left(I+(\lambda+1)(\wt S_0-\lambda I)^{-1}\right)(\wt B_1
- \wt B_0)^{1/2}\uphar\cH,\;\lambda\in\dC\setminus\dR_+,
\]
coincides with $\wt \cQ_0.$ Thus, $\cM(\lambda)=Y\wt
\cQ_0(\lambda)Y$ for all $\lambda\in\dC\setminus \dR_+$.

Let $\dim\cH<\infty$. Then $B$ is Hermitian contraction with finite
equal deficiency indices. In this case the pair $\{\wt B_0, \wt
B_1\}$ necessarily coincides with the pair $\{B_\mu, B_M\}$.
Moreover, $\ker(I+B_F)=\{0\}$, so that the operator $S$ is densely
defined, and the equalities $\wt S_0=S_{\rm{F}}$ and $\wt
S_1=S_{\rm{K}}$ follow.

It is clear that $\IM \cM(i)=-Y\IM Q_0(-i)Y.$ If $\IM\cM(i)$ has a
bounded inverse, then according to \cite[Corollary 6.3]{AHS} one has
$\wt B_0=B_\mu$, $\wt B_1=B_M$, and $\ran(B_M-B_\mu)=\cH$, and since
$\ker(I+S_{\rm{F}})=\{0\}$, one concludes again that the operator
$S$ is densely defined and that $\wt S_0=S_{\rm{F}}$ and $\wt
S_1=S_{\rm{K}}$.
 \end{proof}

Thus if $\cH$ is finite dimensional and $\cM\in\sS_{\rm{F}}(\cH)$,
then there exists a closed \textit{densely defined} nonnegative
operator $S$ with finite deficiency indices such that $\cM$ is the
$Q_F$-function of $S$ and $-\cM^{-1}$ is the $Q_K$-function of the
same $S$.

If $\dim\cH=\infty$, then it is possible that $\dom S=\{0\}$.
Actually, one can take the pair $\{\wt S_0,\wt S_1\}$ in $\sH$ as
given in Corollary \ref{specpairse} and define the corresponding
function
\[
\wt \cQ_0(\lambda)=-I+\frac{\lambda+1}{2}(\wt Z_1 - \wt
Z_0)^{1/2}\left(I+(\lambda+1)(\wt S_0-\lambda I)^{-1}\right)(\wt Z_1
- \wt Z_0)^{1/2},\;\lambda\in\dC\setminus\dR_+.
\]
This function belongs to the class $\sS_{\rm{F}}(\sH)$ and $-\wt
\cQ^{-1}_0(\lambda)=\wt \cQ_1(\lambda)\in\sS_{\rm{K}}(\sH)$, where
\[
\wt \cQ_1(\lambda)=I+\frac{\lambda+1}{2}(\wt Z_1 - \wt Z_0)^{1/2}
\left(I+(\lambda+1)(\wt S_1-\lambda I)^{-1}\right)(\wt Z_1 - \wt
Z_0)^{1/2},\;\lambda\in\dC\setminus\dR_+.
\]

\section{Special boundary pairs, positive boundary triplets and their Weyl functions }

In this section pairs of nonnegative selfadjoint extensions of a
nonnegative symmetric operator and the associated $Q$-functions are
investigated further by constructing specific classes of
(generalized) boundary triplets and boundary pairs suitable for
nonnegative operators. In particular, some new realization results
for the classes of $Q$-functions introduced in the previous sections
are obtained, a most appealing one concerns the class
$\sS_{\rm{F}}(\cH)$ (see Definition~\ref{klass0}) which is
established in Theorems~\ref{MT},~\ref{T1} below.

\subsection{Ordinary, generalized and positive boundary triplets}

\begin{definition} \cite{Bruk}, \cite{Koch} \cite{GG1}, \cite{GGK}.
\label{Ordboundtr} Let $S$ be a closed densely defined symmetric
operator with equal defect numbers in $\mathfrak H$. Let $\cH$ be
some Hilbert space and let $\Gamma_0$ and $\Gamma_1$ be linear
mappings of $\dom S^*$ into $\cH$. A triplet $\{H, \Gamma_0,
\Gamma_1\}$ is called a space of boundary values (s.b.v.) or an
ordinary boundary triplet for $S^*$ if

a) for all $x,y\in\dom S^*$ the Green's identity
\begin{equation}
\label{greenid}
 (S^*x, y)-(x, S^*y)=(\Gamma_1x, \Gamma_0y)_\cH-(\Gamma_0x,
\Gamma_1y)_\cH,\; x, y\in\dom S^*,
\end{equation}
 holds;

 b) the mapping
$$\dom S^*\ni x\mapsto \Gamma x= \{\Gamma_0x, \Gamma_1x\}\in \cH\times \cH$$
is surjective.
\end{definition}
Denote $\sH_+:=\dom S^*$. When equipped with the inner product
\begin{equation}
\label{S*+}
 (u,v)_+:=(u,v)+(S^*u,S^*v),
\end{equation}
$\sH_+$ becomes a Hilbert space. It follows from Definition
\ref{Ordboundtr} that $\Gamma_0$, $\Gamma_1\in\bL(\sH_+,\cH)$, and
$\ker \Gamma_k\supset\dom S,$ $k=1,2$, and, moreover, that the
operators
\[
\wt S_0=S^*\uphar\ker \Gamma_0,\;\wt S_1=S^*\uphar\ker \Gamma_1
\]
are selfadjoint extensions of $S$ which in addition are transversal:
\[
\dom S^*= \dom \wt S_0 + \dom \wt S_1.
\]

The function $M(\lambda)$ defined by
$$M(\lambda)(\Gamma_0x_\lambda)=\Gamma_1x_\lambda, \quad x_\lambda\in\mathfrak N_\lambda,$$
where $\mathfrak N_\lambda$ stands for the defect subspace of $S$ at
$\lambda$, is called the Weyl function of the boundary triplet
\cite{DM1}. With the corresponding $\gamma$-field given by
\[
\gamma(\lambda):=\left(\Gamma_0\uphar\sN_\lambda\right)^{-1}
\]
the definition of the Weyl function can be rewritten in the form
$M(\lambda)=\Gamma_1\gamma(\lambda)$.

If the operators $\Gamma_0$ and $\Gamma_1$ are defined only on a
linear manifold $\cL$ which is dense in $\cH_+$, are closable w.r.t.
norms $||\cdot||_+$ and $||\cdot||_\cH$, the Green's identity
\eqref{greenid} is valid for $x,y\in\cL$, the mapping
$\Gamma_0:\cL\to\cH$ is surjective, and the operator $\wt
S_0:=S^*\uphar\ker\Gamma_0$ is selfadjoint, then $\{\cH, \Gamma_0,
\Gamma_1\}$ is said to be a \textit{generalized boundary triplet};
see \cite{DM2}.

\begin{definition}
\label{poskoch} Let $S$ be a densely defined closed positive
definite symmetric operator in $\sH$ and let $\wt S_0$ be a positive
definite selfadjoint extension of $S$.  An ordinary boundary triplet
$\{\cH,\Gamma_0,\Gamma_1\}$ for $S^*$ is called a positive boundary
triplet corresponding to the decomposition
\[
\dom S^*=\dom \wt S_0\dot+\ker S^*
\]
if
\[
(S^*f,g)=(\wt S_0\cP_0 f,g)+(\Gamma_1 f,\Gamma_0g)_\cH,\quad
f,g\in\dom S^*,
\]
where $\cP_0$ is the projector from $\sH_+=\dom S^*$ onto $\dom\wt
S_0=\ker \Gamma_0$ parallel to $\ker S^*$.
\end{definition}

By definition $\ker \Gamma_0= \dom \wt S_0$ and, moreover,
\[
 \ker \Gamma_1=\dom S\dot+\ker S^*=\dom S_{\rm{K}}.
\]
Definition \ref{poskoch} has been proposed by A.N.~Kochube\u{\i}
\cite{Koch} (see also \cite{GG1}). To cover the general case of a
nonnegative symmetric operator $S$ the following definition was
suggested in \cite{Ar2}:
\begin{definition}
\label{posbountri} Let $S$ be a densely defined closed nonnegative
symmetric operator in $\sH$. An ordinary boundary triplet
$\{\cH,\Gamma_0,\Gamma_1\}$ for $S^*$ is called positive if the
quadratic form
\[
\omega(f,f):=(S^*f,f)-(\Gamma_1f,\Gamma_0 f)_\cH,\; f\in\dom S^*
\]
is nonnegative.
\end{definition}
It follows from Definition \ref{posbountri} that if
$\{\cH,\Gamma_0,\Gamma_1\}$ is a positive boundary triplet, then
$\wt S_0$ and $\wt S_1$ are two mutually transversal nonnegative
selfadjoint extensions of $S$ such that $\wt S_1\le \wt S_0.$
Moreover, it is proved in \cite{Ar2} that positive boundary triplets
exist if and only if the Friedrichs and Kre\u\i n extensions are
transversal. An ordinary boundary triplet for a densely defined
closed nonnegative operator $S$, which satisfies the equalities
\[
\ker \Gamma_0=S_{\rm{F}} \quad \text{and} \quad \ker
\Gamma_1=S_{\rm{K}},
\]
is called \textit{basic}; see \cite{Ar2}, \cite{AHZS}. The following
theorem has been established in \cite{Ar2}.

\begin{theorem}
\label{descallpos} Let $\{\cH,\Gamma^{(0)}_0, \Gamma^{(0)}_1\}$ be a
basic boundary triplet. Then an ordinary boundary triplet
$\{\cH',\Gamma'_0,\Gamma'_1\}$ is positive if and only if the
following equalities hold
\[
\begin{array}{l}
\Gamma'_0=W\left((I_\cH+BC)\Gamma^{(0)}_0-B\Gamma^{(0)}_1\right),\\
\Gamma'_1=W^{*-1}\left(-C\Gamma^{(0)}_0+\Gamma^{(0)}_1\right)
\end{array}
\]
for some bounded nonnegative selfadjoint operators $B$ and $C$ in
$\cH$ and a linear homeomorphism $W\in\bL(\cH,\cH')$.
\end{theorem}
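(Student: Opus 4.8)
The statement to be proved is Theorem~\ref{descallpos}: a description of all positive ordinary boundary triplets of a densely defined closed nonnegative $S$ relative to a fixed basic boundary triplet $\{\cH,\Gamma^{(0)}_0,\Gamma^{(0)}_1\}$. The plan is to reduce the problem to the known classification of all ordinary boundary triplets for $S^*$ and then impose the positivity condition from Definition~\ref{posbountri} to pin down the admissible transformation parameters. The starting point is the standard fact (cf.~\cite{DM1,DM2,GG1}) that any two ordinary boundary triplets $\{\cH,\Gamma^{(0)}_0,\Gamma^{(0)}_1\}$ and $\{\cH',\Gamma'_0,\Gamma'_1\}$ for $S^*$ are related by a $J$-unitary block transformation: there is a bounded boundedly invertible operator $\begin{pmatrix} T_{00}& T_{01}\\ T_{10}& T_{11}\end{pmatrix}\colon \cH\oplus\cH\to\cH'\oplus\cH'$ preserving the symplectic form in \eqref{greenid}, with $\Gamma'_0=T_{00}\Gamma^{(0)}_0+T_{01}\Gamma^{(0)}_1$ and $\Gamma'_1=T_{10}\Gamma^{(0)}_0+T_{11}\Gamma^{(0)}_1$. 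The first step is therefore to write down this general transformation and record the $J$-unitarity relations among the blocks.

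The second step is to compute the form $\omega(f,f)=(S^*f,f)-(\Gamma'_1 f,\Gamma'_0 f)_{\cH'}$ in terms of the basic triplet data. Here I would exploit the defining property of the basic triplet: $\ker\Gamma^{(0)}_0=\dom S_{\rm F}$ and $\ker\Gamma^{(0)}_1=\dom S_{\rm K}$, together with the fact (from Definition~\ref{posbountri} and the remarks following it, and from \cite{Ar2}) that for a basic triplet the form $(S^*f,f)-(\Gamma^{(0)}_1 f,\Gamma^{(0)}_0 f)_\cH$ equals $S_{\rm K}[f]\ge 0$, i.e., the ``reference'' $\omega^{(0)}$ is already nonnegative and in fact equals the Kre\u\i n form. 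Substituting the block transformation into $\omega$, one gets an expression of the shape $\omega^{(0)}(f,f)$ plus quadratic terms in $\Gamma^{(0)}_0 f$ and $\Gamma^{(0)}_1 f$ with operator coefficients built from the $T_{ij}$; using the $J$-unitarity relations these coefficients simplify. Nonnegativity of $\omega$ for \emph{all} $f\in\dom S^*$, combined with surjectivity of $f\mapsto\{\Gamma^{(0)}_0 f,\Gamma^{(0)}_1 f\}$ onto $\cH\times\cH$ and the already-known nonnegativity of $\omega^{(0)}$, forces the cross and diagonal coefficients to assemble into a nonnegative quadratic form on $\cH\times\cH$, which is what produces the two nonnegative selfadjoint operators $B$ and $C$. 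Concretely, one expects $T_{01}$ to be (up to the homeomorphism $W$) of the form $-WB$ and the off-diagonal block on the $\Gamma_1$-row to encode $-C$, yielding exactly the stated formulas $\Gamma'_0=W((I+BC)\Gamma^{(0)}_0-B\Gamma^{(0)}_1)$, $\Gamma'_1=W^{*-1}(-C\Gamma^{(0)}_0+\Gamma^{(0)}_1)$; the factor $I+BC$ and the adjoint-inverse $W^{*-1}$ are then dictated by the $J$-unitarity constraint linking the two rows.

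The third step is the converse: given nonnegative selfadjoint $B,C$ and a homeomorphism $W\in\bL(\cH,\cH')$, verify that the displayed formulas define an ordinary boundary triplet (check Green's identity \eqref{greenid} and surjectivity of $\Gamma'=\{\Gamma'_0,\Gamma'_1\}$ — the latter from invertibility of the underlying block matrix, whose determinant-type condition reduces to invertibility of $W$) and that the associated $\omega$ is nonnegative (a direct computation showing $\omega(f,f)=\omega^{(0)}(f,f)+\|B^{1/2}\Gamma^{(0)}_1 f\|^2+\text{(terms)}$ that manifestly form a sum of squares, or more cleanly $\omega(f,f)=S_{\rm K}[f]+\|C^{1/2}\Gamma^{(0)}_0 f - \dots\|^2\ge 0$). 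The main obstacle is the forward direction: extracting \emph{both} $B$ and $C$ as \emph{genuinely nonnegative selfadjoint} operators — not merely bounded — from the single scalar inequality $\omega\ge 0$, and showing the coupling term must be precisely $I+BC$ rather than an independent parameter. This is where one must use the $J$-unitarity relations in full (they cut the a priori four free blocks down to a pair of nonnegative operators plus a homeomorphism) and the precise identity $\omega^{(0)}[f]=S_{\rm K}[f]$; a convenient device is to test $\omega\ge0$ on the subspaces $\ker\Gamma^{(0)}_0=\dom S_{\rm F}$ and $\ker\Gamma^{(0)}_1=\dom S_{\rm K}$ separately, which isolates the quadratic form in $\Gamma^{(0)}_1 f$ alone (giving $B\ge0$) and in $\Gamma^{(0)}_0 f$ alone (giving $C\ge0$), and then to recombine. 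I would also invoke the transversality consequence noted after Definition~\ref{posbountri} (both $\wt S_0,\wt S_1$ nonnegative with $\wt S_1\le\wt S_0$) as a consistency check that the parameters $B,C$ are not further constrained.
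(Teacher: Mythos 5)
First, a point of comparison: the paper does not prove Theorem~\ref{descallpos} at all; it is quoted as established in \cite{Ar2}, so your proposal can only be judged on its own merits. Your overall framework is the natural one, and parts of it are sound: writing $\Gamma'_0=T_{00}\Gamma^{(0)}_0+T_{01}\Gamma^{(0)}_1$, $\Gamma'_1=T_{10}\Gamma^{(0)}_0+T_{11}\Gamma^{(0)}_1$ with a $J$-unitary block operator $T$, the identity $(S^*f,f)-(\Gamma^{(0)}_1f,\Gamma^{(0)}_0f)=S_{\rm K}[f]$ for a basic triplet (this indeed follows from the kernel conditions and Green's identity), and the converse direction, which is essentially complete: a direct computation gives $\omega'(f,f)=S_{\rm K}[f]+\|C^{1/2}\Gamma^{(0)}_0f\|^2+\|B^{1/2}(C\Gamma^{(0)}_0f-\Gamma^{(0)}_1f)\|^2\ge 0$, and surjectivity of $\Gamma'$ follows since the block matrix factors into boundedly invertible triangular factors.

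The forward direction, however, has genuine gaps. (a) Surjectivity of $\Gamma^{(0)}$ does not let you ``assemble'' the boundary terms into a nonnegative form $q$ on $\cH\times\cH$: from $\omega'(f,f)=S_{\rm K}[f]+q(\Gamma^{(0)}_0f,\Gamma^{(0)}_1f)\ge 0$ you only get $q(u,v)\ge-\inf\{S_{\rm K}[f]:\Gamma^{(0)}f=\{u,v\}\}$, and you must prove this infimum over each fibre $f_1+\dom S$ is zero; this rests on the approximation property $\inf_{\f\in\cD[S_{\rm F}]}S_{\rm K}[f-\f]=0$ quoted in Section 2.4 from \cite{Ar3}, plus $\dom S$ being a form core of $S_{\rm F}$ --- an ingredient your sketch never invokes, and it is needed even for your kernel tests. (b) The test on $\ker\Gamma^{(0)}_1=\dom S_{\rm K}$ does \emph{not} give $C\ge0$: it gives nonnegativity of $-T_{00}^*T_{10}$, which in the final parametrization is $(I+CB)C=C+CBC$, and $C+CBC\ge0$, $B\ge0$ do not imply $C\ge0$ (take $C=-I$, $B=2I$). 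One has to use mixed boundary data, e.g.\ test $q$ at $v=Cu$, which is possible only after $C$ has been defined. (c) That is the crux you leave unaddressed: $C=-T_{11}^{-1}T_{10}$, $B=-T_{11}^*T_{01}$ and $W=T_{11}^{*-1}$ make sense only after proving $T_{11}$ is boundedly invertible, and $J$-unitarity alone does not give this (the swap $\Gamma'_0=\Gamma^{(0)}_1$, $\Gamma'_1=-\Gamma^{(0)}_0$ is $J$-unitary with $T_{11}=0$). Invertibility must be extracted from positivity, e.g.\ from $q\ge0$ one gets $|(T_{10}u,T_{01}v)|^2\le(-T_{00}^*T_{10}u,u)(-T_{01}^*T_{11}v,v)$, hence $\|T_{10}^*T_{01}v\|\le \|T_{00}^*T_{10}\|^{1/2}(-T_{01}^*T_{11}v,v)^{1/2}$, which together with $T_{00}^*T_{11}=I+T_{10}^*T_{01}$ rules out unit vectors $v_n$ with $T_{11}v_n\to0$, and a similar argument kills $\ker T_{11}^*$. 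Once (a)--(c) are supplied, the $J$-unitarity relations (selfadjointness of $T_{11}^*T_{01}$ and $T_{10}T_{11}^*$, and $T_{00}^*T_{11}-T_{10}^*T_{01}=I$) do yield exactly the stated formulas; as written, your plan names the difficulty but does not contain the ideas that resolve it.
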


Notice that in \cite{DM2} and \cite{Ar5} generalized basic boundary
triplets are constructed. In the next section a more general class
of generalized positive boundary triplets is constructed.

\subsection{Special boundary pairs and corresponding positive boundary triplets}
\subsubsection{The linear manifold $\cL$}

In the rest of this section we assume that
\[
\begin{array}{l}
 \textbf{(a)}\;S\;\mbox{is a densely defined nonnegative symmetric operator operator
 in}\;\sH,\\
 \textbf{(b)}\; \wt S_0\;\mbox{and}\; \wt S_1\;\mbox{are two nonnegative selfadjoint
 extensions of}\; S,\;\mbox{such that}\\
 \dom \wt S_1\cap \dom \wt S_0=\dom S,\\
 \textbf{(c)}\;\mbox{the form}\;  \wt{S}_0[\cdot,\cdot]\;\mbox{is a closed
restriction of the form}\;\wt{S}_1[\cdot,\cdot]
\end{array}
\]
Define the linear manifold $\cL$ by the equality
\begin{equation}
\label{L1}
\cL:=\dom\wt S_0\dot+\left(\cD[\wt S_1]\ominus_{\wt S_1}\cD[\wt S_0]\right). 
\end{equation}
Let $\sN_z$ be the defect subspace of $S$ at $z$ and denote
\begin{equation}
\label{Ldefect} \wt \sN_z:=\sN_z\cap \cL, \;z\in\Ext[0,\infty).
\end{equation}
Since  $\cD[\wt S_1]\ominus_{\wt S_1}\cD[\wt S_0]\subset \sN_{-1}$,
see Proposition~\ref{new0}, it is  clear that
\begin{equation}
\label{def-1} \cD[\wt S_1]\ominus_{\wt S_1}\cD[\wt S_0]=\wt
\sN_{-1}.
\end{equation}
Consequently,
\[
\wt \sN_z=(\wt S_0+I)(\wt S_0 -zI)^{-1}\wt \sN_{-1}=
\left(I+(z+1)(\wt S_0 -zI)^{-1}\right)\wt \sN_{-1},
\]
and one has the decompositions
\begin{equation}
\label{decomp11}
 \cL=\dom\wt S_0\dot+\wt \sN_z,\;\cD[\wt S_1]=\cD[\wt
S_0]\dot+\wt \sN_z,\;z\in\Ext[0,\infty).
\end{equation}
In particular, with $z,\xi\in\Ext[0,\infty)$ the subspaces in
\eqref{Ldefect} are connected by
\[
 \wt \sN_z=(\wt S_0-\xi I)(\wt S_0 -zI)^{-1}\wt\sN_{\xi}
 =\left(I+(z-\xi)(\wt S_0 -zI)^{-1}\right)\wt\sN_\xi.
\]


\begin{lemma}
\label{Llemma} Let $S$ and $\{\wt S_0,\wt S_1\}$ satisfy conditions
\textbf{(a), (b), (c)}. Then $\cL$ defined in \eqref{L1} satisfies
\begin{equation}
\label{L2}
\cL=\dom\wt S_1 \dot+\left(\cD[\wt S_1]\ominus_{\wt S_1}\cD[\wt S_0]\right) 
\end{equation}
and
\[
\dom\wt S_0+\dom\wt S_1\subset\cL\subset\cD[\wt S_1]\cap\dom S^*.
\]
\end{lemma}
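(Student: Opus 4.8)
The plan is to deduce the equality \eqref{L2} from the two inclusions $\dom\wt S_1\subseteq\cL$ and $\dom\wt S_0\subseteq\dom\wt S_1\dot+\wt\sN_{-1}$, where $\wt\sN_{-1}=\cD[\wt S_1]\ominus_{\wt S_1}\cD[\wt S_0]$ by \eqref{def-1}. Granting these, on the one hand $\dom\wt S_1+\wt\sN_{-1}\subseteq\cL$ (the second summand lies in $\cL$ by the definition \eqref{L1}), and on the other $\cL=\dom\wt S_0+\wt\sN_{-1}\subseteq\dom\wt S_1\dot+\wt\sN_{-1}$, so the two sets coincide; moreover the sum $\dom\wt S_1\dot+\wt\sN_{-1}$ is direct, since $\wt\sN_{-1}\subseteq\sN_{-1}=\ker(S^*+I)$ forces any $v\in\dom\wt S_1\cap\wt\sN_{-1}$ to satisfy $(\wt S_1+I)v=(S^*+I)v=0$ and hence $v=0$ (as $\wt S_1\ge0$, so $-1\in\rho(\wt S_1)$). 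The remaining double inclusion of the lemma is then cheap: $\dom\wt S_0+\dom\wt S_1\subseteq\cL$ follows from $\dom\wt S_0\subseteq\cL$ (definition) and $\dom\wt S_1\subseteq\cL$, while $\cL\subseteq\cD[\wt S_1]\cap\dom S^*$ holds because $\dom\wt S_0\subseteq\cD[\wt S_0]\subseteq\cD[\wt S_1]$ by assumption \textbf{(c)} (cf. \eqref{ineq0}) and $\dom\wt S_0\subseteq\dom S^*$, whereas $\wt\sN_{-1}\subseteq\cD[\wt S_1]$ by \eqref{def-1} and $\wt\sN_{-1}\subseteq\sN_{-1}\subseteq\dom S^*$.

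The heart of the matter is the following claim, which I would isolate and prove first: \emph{if $a\in\dom\wt S_0$, $b\in\dom\wt S_1$ and $a-b\in\sN_{-1}$, then $a-b\in\wt\sN_{-1}$.} To see this, set $n:=a-b$; since $a\in\dom\wt S_0\subseteq\cD[\wt S_0]\subseteq\cD[\wt S_1]$ and $b\in\dom\wt S_1\subseteq\cD[\wt S_1]$, we have $n\in\cD[\wt S_1]$, so by \eqref{def-1} it suffices to verify $\wt S_1[n,v]+(n,v)=0$ for every $v\in\cD[\wt S_0]$. For such $v$, assumption \textbf{(c)} gives $\wt S_1[a,v]=\wt S_0[a,v]=(\wt S_0a,v)$ — the last equality by the representation theorem \eqref{einz} for $\wt S_0$ — and the same theorem applied to $\wt S_1$ gives $\wt S_1[b,v]=(\wt S_1b,v)$; hence
\[
\wt S_1[n,v]+(n,v)=\bigl((\wt S_0+I)a-(\wt S_1+I)b,\,v\bigr)=\bigl((S^*+I)(a-b),\,v\bigr)=\bigl((S^*+I)n,\,v\bigr)=0,
\]
where the middle equality uses $\wt S_j\subseteq S^*$ and the last uses $n\in\sN_{-1}=\ker(S^*+I)$. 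This proves the claim.

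Finally I would invoke the standard decomposition $\dom S^*=\dom\wt S_j\dot+\sN_{-1}$ ($j=0,1$), which holds because $\wt S_j\ge0$, so $-1\in\rho(\wt S_j)$, and which also underlies \eqref{decomp11}. Given $u\in\dom\wt S_1$, write $u=a+n$ with $a\in\dom\wt S_0$, $n\in\sN_{-1}$; then $a-u=-n\in\sN_{-1}$, so the claim yields $a-u\in\wt\sN_{-1}$, hence $n\in\wt\sN_{-1}$ and $u=a+n\in\dom\wt S_0\dot+\wt\sN_{-1}=\cL$, giving $\dom\wt S_1\subseteq\cL$. Given $a\in\dom\wt S_0$, write $a=b+n$ with $b\in\dom\wt S_1$, $n\in\sN_{-1}$; then $a-b=n\in\sN_{-1}$, so the claim gives $n\in\wt\sN_{-1}$ and $a=b+n\in\dom\wt S_1\dot+\wt\sN_{-1}$, which is the reverse inclusion. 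The step I expect to be the real obstacle is the claim itself: splitting $u\in\dom\wt S_1$ through the \emph{form} decomposition $\cD[\wt S_1]=\cD[\wt S_0]\dot+\wt\sN_{-1}$ of \eqref{decomp11} places the first component only in $\cD[\wt S_0]$, and one must upgrade it to $\dom\wt S_0$; what makes this possible is the algebraic cancellation $(\wt S_0+I)a-(\wt S_1+I)b=(S^*+I)(a-b)$, which vanishes on $\sN_{-1}$, together with the form inclusion \textbf{(c)} that permits rewriting $\wt S_1[a,v]$ as $(\wt S_0a,v)$ — without \textbf{(c)} this identification, and hence the argument, breaks down.
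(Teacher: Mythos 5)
Your proof is correct, and it takes a genuinely different route from the paper's. The paper argues through the Cayley transforms: writing $\dom\wt S_k=\ran(I+\wt B_k)$ and invoking Proposition~\ref{new0}, which under \textbf{(c)} gives $\cD[\wt S_1]\ominus_{\wt S_1}\cD[\wt S_0]=\ran(\wt B_1-\wt B_0)^{1/2}$, it reads \eqref{L2} off the one-line identities $(I+\wt B_1)f=(I+\wt B_0)f+(\wt B_1-\wt B_0)f$ and $(I+\wt B_0)f=(I+\wt B_1)f-(\wt B_1-\wt B_0)f$, using $\ran(\wt B_1-\wt B_0)\subset\ran(\wt B_1-\wt B_0)^{1/2}$; directness is then noted essentially as you argue it. You instead stay entirely on the operator/form level: you use the von Neumann-type decompositions $\dom S^*=\dom\wt S_j\dot+\sN_{-1}$, $j=0,1$, and isolate the claim that if $a\in\dom\wt S_0$, $b\in\dom\wt S_1$ and $a-b\in\sN_{-1}$, then $a-b$ lies in $\cD[\wt S_1]\ominus_{\wt S_1}\cD[\wt S_0]=\wt\sN_{-1}$, proved via \textbf{(c)}, the representation theorem \eqref{einz}, and $\wt S_j\subset S^*$. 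What the paper's route buys is brevity, since the square-root range identity of Proposition~\ref{new0}\,(ii) is already in place; what yours buys is independence from that identity --- you only need \eqref{def-1} and the first representation theorem --- and it makes explicit the structural reason why the defect components of vectors of $\dom\wt S_0$ and $\dom\wt S_1$ automatically land in the form-orthogonal complement $\wt\sN_{-1}$. The remaining steps (directness from $\wt S_1\ge 0$, $\wt S_1\subset S^*$, hence $-1\in\rho(\wt S_1)$, and the final double inclusion) agree in substance with the paper.
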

\begin{proof}
According to Proposition~\ref{new0} one has $\cD[\wt
S_1]\ominus_{\wt S_1}\cD[\wt S_0]=\ran(\wt B_1-\wt B_0)^{1/2}$ and
since $\dom \wt S_k=\ran (I+\wt B_k)$, $k=0,1$, we have
\[
\begin{array}{l}
(I+\wt B_1)f=(I+\wt B_0)f+(\wt B_1-\wt B_0)f\in\dom \wt S_0+\ran(\wt
B_1-\wt B_0)^{1/2},\\
(I+\wt B_0)f=(I+\wt B_1)f-(\wt B_1-\wt B_0)f\in\dom \wt S_1+\ran(\wt
B_1-\wt B_0)^{1/2}.
\end{array}
\]
These identities combined with \eqref{L1} lead to the sum
representation in \eqref{L2} and since $S$ is densely defined and
$\wt S_1$ is nonnegative, the sum in \eqref{L2} is direct.

The last two inclusions in the lemma are clear from \eqref{def-1}
and \eqref{L2}.
\end{proof}

If $\wt S_0=S_{\rm{F}}$, $\wt S_1=S_{\rm{K}}$ then
$\cL=\cD[S_{\rm{K}}]\cap\dom S^*$. Moreover, in the case of
transversality one has automatically $\cL=\dom S^*=\dom \wt S_0+\dom
\wt S_1$.

\begin{proposition}
\label{zamkn} Under the assumptions in Lemma~\ref{Llemma} the
sesquilinear form
\[
\dom\wt \eta=\cL, \;\wt\eta[u,v]:=\wt S_1[u,v],\; u,v\in\cL
\]
is closed in the Hilbert space $\sH_+$.
\end{proposition}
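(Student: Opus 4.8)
Since $\wt S_1$ is nonnegative the form $\wt\eta$ is nonnegative, so closedness of $\wt\eta$ in $\sH_+$ means precisely that $\cL$ is complete with respect to the norm $u\mapsto\left(\|u\|_+^2+\wt S_1[u]\right)^{1/2}$. The plan is therefore to take $u_n\in\cL$ with $u_n\to u$ in $\sH_+$ and $\wt\eta[u_n-u_m]=\wt S_1[u_n-u_m]\to 0$ and to prove that $u\in\cL$ and $\wt S_1[u_n-u]\to 0$. The first step is easy: convergence in $\sH_+$ forces convergence in $\sH$, hence $\|u_n-u_m\|_{\wt S_1}^2=\wt S_1[u_n-u_m]+\|u_n-u_m\|^2\to 0$; as $\wt S_1[\cdot,\cdot]$ is closed, $\cD[\wt S_1]$ is complete and $u_n$ converges there, necessarily to $u$ because of the continuous embedding $\cD[\wt S_1]\hookrightarrow\sH$. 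This already yields $\wt S_1[u_n-u]\to 0$, so it remains only to show that $u\in\cL$.

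The step I expect to be the main obstacle is exactly this last one, because $\cL$ is in general a proper subspace of $\cD[\wt S_1]\cap\dom S^*$ and the ``defect part'' $\wt\sN_{-1}=\ran(\wt B_1-\wt B_0)^{1/2}$ need not be closed in $\sH$; so one cannot conclude $u\in\cL$ by passing to $\sH$-limits in the decompositions. My plan is to work instead inside the form space $\cD[\wt S_1]$, where $\wt\sN_{-1}=\cD[\wt S_1]\ominus_{\wt S_1}\cD[\wt S_0]$ is by construction a \emph{closed} subspace. I would use the splitting $\cL=\dom\wt S_0\,\dot+\,\wt\sN_{-1}$ from \eqref{L1}, \eqref{def-1} (equivalently \eqref{decomp11} with $z=-1$, which is legitimate since $-1\in\rho(\wt S_0)$), which refines $\dom S^*=\dom\wt S_0\,\dot+\,\sN_{-1}$, and write $u_n=u_{0,n}+u_{N,n}$, $u=u_0+u_N$ accordingly, with $u_{0,n},u_0\in\dom\wt S_0$ and $u_{N,n}\in\wt\sN_{-1}$, $u_N\in\sN_{-1}$. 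The key algebraic identity, coming from $S^*\!\uphar\dom\wt S_0=\wt S_0$ and $S^*\!\uphar\sN_{-1}=-I$, is $(I+\wt S_0)u_{0,n}=u_n+S^*u_n$, so that $u_{0,n}=(I+\wt S_0)^{-1}(u_n+S^*u_n)$ and likewise for $u_0$.

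From here everything is routine. Boundedness of $(I+\wt S_0)^{-1}\in\bL(\sH)$ together with $u_n+S^*u_n\to u+S^*u$ in $\sH$ gives $u_{0,n}\to u_0$ and $\wt S_0u_{0,n}=(u_n+S^*u_n)-u_{0,n}\to\wt S_0u_0$ in $\sH$, whence $\wt S_0[u_{0,n}-u_{0,m}]\to 0$ by Cauchy--Schwarz; thus $(u_{0,n})$ is Cauchy in $\cD[\wt S_0]$ and $u_{0,n}\to u_0$ there. Since $\wt S_0[\cdot,\cdot]$ is a restriction of $\wt S_1[\cdot,\cdot]$ by assumption \textbf{(c)}, the two form norms agree on $\cD[\wt S_0]$, so in fact $u_{0,n}\to u_0$ in $\cD[\wt S_1]$ as well. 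Combined with $u_n\to u$ in $\cD[\wt S_1]$ from the first step, this shows $u_{N,n}=u_n-u_{0,n}\to u-u_0=u_N$ in $\cD[\wt S_1]$; since each $u_{N,n}$ lies in the closed subspace $\wt\sN_{-1}$ of $\cD[\wt S_1]$, so does the limit $u_N$, and therefore $u=u_0+u_N\in\dom\wt S_0\,\dot+\,\wt\sN_{-1}=\cL$. This completes the proof.
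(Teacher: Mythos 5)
Your proof is correct. It rests on the same structural facts as the paper's argument (the direct decomposition $\cL=\dom\wt S_0\dot+\wt\sN_{-1}$ from \eqref{L1}, \eqref{def-1}, the closedness of the forms of $\wt S_0$ and $\wt S_1$, and assumption \textbf{(c)}), but the mechanics are genuinely different. The paper parametrizes the defect component as $(\wt B_1-\wt B_0)^{1/2}g_n$ with $g_n\in\sN$ and uses the Cayley-transform identity \eqref{norm}, $\|(\wt B_1-\wt B_0)^{1/2}g\|^2_{\wt S_1}=2\|Pg\|^2$, to deduce that $\{g_n\}$ is Cauchy in $\sH$; it then reads off graph-norm convergence of the $\dom\wt S_0$-components from $S^*u_n=\wt S_0 f_n-(\wt B_1-\wt B_0)^{1/2}g_n$ and finally verifies the form convergence by hand. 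You avoid the Cayley-transform parametrization altogether: you first get $u\in\cD[\wt S_1]$ and $\wt S_1[u_n-u]\to 0$ directly from closedness of $\wt S_1[\cdot,\cdot]$, then recover the $\dom\wt S_0$-components via the resolvent formula $u_{0,n}=(I+\wt S_0)^{-1}(I+S^*)u_n$ (using $S^*\uphar\sN_{-1}=-I$ and $-1\in\rho(\wt S_0)$), pass to $\cD[\wt S_1]$-convergence of these components via \textbf{(c)}, and conclude by closedness of the orthogonal complement $\wt\sN_{-1}$ in the form Hilbert space $\cD[\wt S_1]$. Your route is arguably more elementary and self-contained at this point of the paper (it does not invoke \eqref{norm} or the operators $\wt B_k$ at all), while the paper's computation with \eqref{norm} has the side benefit of exhibiting explicitly how the $\sH$-norm of the $\sN$-parameters controls the defect part, a fact it reuses elsewhere; you also correctly identify and handle the one delicate point, namely that $\wt\sN_{-1}$ need not be closed in $\sH$, by working in $\cD[\wt S_1]$ instead.
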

\begin{proof}
Let $\{u_n\}$ be a sequence from $\cL$ such that
\begin{enumerate}
\item $\lim\limits_{n\to\infty}u_n=u$ in $\sH_+$,
\item $\lim\limits_{m,n\to\infty}\wt S_1[u_n-u_m]=0$.
\end{enumerate}
Due to \eqref{L1} one can write $u_n=f_n+(\wt B_1-\wt
B_0)^{1/2}g_n,\;n\in\dN$, where $f_n\in\dom\wt S_0$ and
$g_n\in\sN=\cran(\wt B_1-\wt B_0)^{1/2}$, which in view of
\eqref{norm} leads to
\[
\begin{array}{l}
\wt S_1[u_n-u_m]+||u_n-u_n||^2 \\
=\wt S_1[f_n-f_m]+||f_n-f_m||^2+||(\wt B_1-\wt B_0)^{1/2}(g_n-g_m)||^2_{\wt S_1}\\
=\wt S_1[f_n-f_m]+||f_n-f_m||^2+2||g_n-g_m||^2.
\end{array}
\]
Hence the sequences $\{f_n\}$ and $\{g_n\}$ converge in $\sH$. Let
$g:=\lim\limits_{n\to\infty}g_n$. Then $g\in\sN$ and
\[
\lim\limits_{n\to\infty}(\wt B_1-\wt B_0)^{1/2}g_n=(\wt B_1-\wt
B_0)^{1/2}g.
\]
It follows from \eqref{def-1} that
\[
S^*u_n=\wt S_0f_n-(\wt B_1-\wt B_0)^{1/2}g_n,\; n\in\dN,
\]
and hence the sequence $\{u_n\}$ converges in $\sH_+$. Consequently,
$\{f_n\}$ converges in $\sH_+$. Put
\[
f:=\lim\limits_{n\to\infty}f_n \quad\mbox{in the Hilbert space}\;
\sH_+.
\]
Then $f\in\dom\wt S_0$ and
\[
u=f+(\wt B_1-\wt B_0)^{1/2}g.
\]
Thus the vector $u$ belongs to $\cL$. Since the form $\wt
S_0[\cdot,\cdot]$ is the closed restriction of the form $\wt
S_1[\cdot,\cdot]$ we get that
\[
\lim\limits_{n\to\infty}\wt S_1[f-f_n]=\lim\limits_{n\to\infty}\wt
S_0[f-f_n]=0.
\]
Therefore,
\[
\wt S_1[u-u_n]+||u-u_n||^2=\wt S_0[f-f_n]+||f-f_n||^2+||(\wt B_1-\wt
B_0)^{1/2}(g-g_n)||^2\to 0,\; n\to\infty,
\]
and this completes the proof.
\end{proof}

It follows from Proposition \ref{zamkn} that the linear manifold
$\cL$ is a Hilbert space with respect to the inner product (cf.
\eqref{S*+})
\begin{equation}
\label{newinner} (u,v)_{\wt\eta}:=\wt S_1[u,v]+(u,v)_+.
\end{equation}
\begin{lemma}\label{l1}
The identity
\[
 \wt S_1[f,\f]=(S^*f,\f)
\]
is satisfied for all $f\in\cL$ and all $\f\in\cD[\wt S_0]$.
\end{lemma}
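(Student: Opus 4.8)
The plan is to exploit the decomposition \eqref{L1} of $\cL$ and to handle the two summands separately by linearity. Write $f = f_0 + w$ with $f_0 \in \dom \wt S_0$ and $w \in \cD[\wt S_1]\ominus_{\wt S_1}\cD[\wt S_0] = \wt\sN_{-1}$, the latter equality being \eqref{def-1}. Since $\f \in \cD[\wt S_0]$ is fixed throughout, both sides of the claimed identity are linear in $f$, so it suffices to verify it when $f = f_0 \in \dom \wt S_0$ and when $f = w \in \wt\sN_{-1}$.

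For $f = f_0 \in \dom \wt S_0$: here $\wt S_1[f_0,\f] = \wt S_0[f_0,\f]$ because the form $\wt S_0[\cdot,\cdot]$ is a closed restriction of $\wt S_1[\cdot,\cdot]$ (assumption \textbf{(c)}) and $f_0 \in \dom \wt S_0 \subset \cD[\wt S_0]$, $\f \in \cD[\wt S_0]$. By the first representation theorem \eqref{einz} (with $H = \wt S_0$, $h = f_0 \in \dom \wt S_0$, $k = \f \in \cD[\wt S_0]$) one has $\wt S_0[f_0,\f] = (\wt S_0 f_0, \f)$. Finally, since $\wt S_0$ is a selfadjoint extension of $S$, it is a restriction of $S^*$, so $\wt S_0 f_0 = S^* f_0$; hence $\wt S_1[f_0,\f] = (S^* f_0, \f)$.

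For $f = w \in \wt\sN_{-1} = \sN_{-1}\cap\cL$: then $S^* w = -w$, so the right-hand side equals $(S^* w, \f) = -(w,\f)$. For the left-hand side, recall $\wt\sN_{-1} = \cD[\wt S_1]\ominus_{\wt S_1}\cD[\wt S_0]$, which means precisely that $\wt S_1[w,u] + (w,u) = 0$ for every $u \in \cD[\wt S_0]$; see \eqref{inpr}. Taking $u = \f \in \cD[\wt S_0]$ gives $\wt S_1[w,\f] = -(w,\f)$. Thus both sides agree. Adding the two cases via linearity in $f$ yields $\wt S_1[f,\f] = (S^* f,\f)$ for all $f \in \cL$ and all $\f \in \cD[\wt S_0]$.

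I do not anticipate a genuine obstacle here; the only point requiring a little care is making sure that for $f = w \in \wt\sN_{-1}$ the quantity $\wt S_1[w,\f]$ is well defined, i.e. that $w \in \cD[\wt S_1]$ — but this is immediate since $\wt\sN_{-1} \subset \cD[\wt S_1]$ by \eqref{def-1}, and $\f \in \cD[\wt S_0] \subset \cD[\wt S_1]$ as well. One should also note that the orthogonality relation defining $\wt\sN_{-1}$ is stated against all of $\cD[\wt S_0]$, which is exactly the range of $\f$, so no density or closure argument is needed.
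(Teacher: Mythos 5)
Your proof is correct and follows essentially the same route as the paper: decompose $f$ via \eqref{L1} into a part in $\dom\wt S_0$ (handled by assumption \textbf{(c)}, the representation theorem \eqref{einz}, and $\wt S_0\subset S^*$) and a part in $\cD[\wt S_1]\ominus_{\wt S_1}\cD[\wt S_0]\subset\sN_{-1}$ (handled by $S^*w=-w$ and the orthogonality relation \eqref{inpr}). The only difference is purely organizational — you treat the two summands separately by linearity, whereas the paper computes both sides at once — so there is nothing to add.
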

\begin{proof} Let $f=\psi+g$, $\psi\in\dom\wt S_0,\,g\in \cD[\wt S_1]\ominus_{\wt S_1}\cD[\wt S_0]$.
According to \eqref{def-1} $\cD[\wt S_1]\ominus_{\wt S_1}\cD[\wt
S_0]\subset \sN_{-1}$, so that $S^*g=-g$ and, therefore, $S^*f=\wt
S_0\psi-g.$ On the other hand,
\[
\wt S_1[f,\f]=\wt S_1[\psi,\f]+\wt S_1[g,\f]=\wt
S_0[\psi,\f]-(g,\f)=(\wt S_0\psi-g,\f),
\]
where the second identity follows from \eqref{inpr}. This completes
the proof.
\end{proof}

\subsubsection{Boundary pairs and $\gamma$-fields}
\begin{definition}
\label{BP} The pair $\{\cH,\Gamma_0\}$ is called a boundary pair for
$\{\wt S_0,\wt S_1\}$ if $\cH$ is a Hilbert space, $\Gamma_0$ is a
continuous linear operator from the Hilbert space $\cD[\wt S_1]$
into $\cH$, and
\[
\ker \Gamma_0=\cD[\wt S_0],\;\ran \Gamma_0=\cH.
\]
\end{definition}
Due to \eqref{decomp11} and the equality $\ker\Gamma_0=\cD[\wt S_0]$
the mapping $\Gamma_0:\wt\sN_z\to \cH$ is a bijection, the inverse
operator
\begin{equation}
\label{gz} \Gamma_0(z):=\left(\Gamma_0\uphar\wt\sN_z\right)^{-1}
\end{equation}
belongs to $\bL(\cH,\cD[\wt S_1])\cap\bL(\cH,\sH)$. Since
$||\f_z||^2_+=(1+|z|^2)||\f_z||^2$ for all $\f_z\in\sN_z$, the
operator $\Gamma_0(z)$ is continuous from $\cH$ into $\cL$ with
respect to the inner product \eqref{newinner}.
\begin{definition}\label{gfield}
Let $\{\cH,\Gamma_0\}$ be a boundary pair for $\{\wt S_0,\wt S_1\}$.
The operator valued function $\Gamma_0(z)$ defined by \eqref{gz} is
called the $\Gamma_0$-field.
\end{definition}
Since $\ker \Gamma_0=\cD[\wt S_0]$ and $\ran\Gamma_0=\cH$, one
obtains the following equality:
\begin{equation}
\label{gzxi} \Gamma_0(z)=\Gamma_0(\xi)+(z-\xi)(\wt
S_0-zI)^{-1}\Gamma_0(\xi),\; z,\xi\in\Ext [0,\infty).
\end{equation}
Therefore, the $\Gamma_0$-field is a holomorphic  function in $\Ext
[0,\infty)$ and $\ran\Gamma_0(z)=\wt\sN_z.$ In addition,
\[
s-\lim\limits_{x\downarrow-\infty}\Gamma_0(x)=0.
\]
Observe that the operator $\Gamma_0\uphar\cL$ is closed in $\sH_+$.
To see this let $\{u_n\}\subset \cL$ be a sequence such that
 \[
 u_n\to u\;\mbox{in}\; \cH_+,\; \Gamma_0 u_n\to e\;\mbox{in}\;\cH\;\mbox{when}\; n\to\infty.
 \]
Due to \eqref{decomp11} and \eqref{gz}
\[
 u_n=f_n+\Gamma_0(-1)e_n,\; \{f_n\}\subset\dom\wt S_0,\;\{e_n\}\subset\cH.
\]
Since $e_n=\Gamma_0 u_n$, $n\in\dN$, the sequence $\{e_n\}$
converges in $\cH$ to the vector $e$. Therefore the sequence
$\{\Gamma_0(-1)e_n\}$ converges to $\Gamma_0(-1)e\in\wt \sN_{-1}$ in
the Hilbert space $\cD[\wt S_1]$. Hence
$\lim\limits_{n\to\infty}\Gamma_0(-1)e_n=\Gamma_0(-1)e$ in $\sH_+$.
It follows that the sequence $\{f_n\}$ converges in $\sH_+$ to some
vector $f\in \dom\wt S_0$ and, thus, $u=f+\Gamma_0(-1)e \in\cL$,
$e=\Gamma_0u$, i.e., $\Gamma_0\uphar\cL$ is closed in $\sH_+$.

Define the $\bL(\cH)$-valued function $W(z,\xi)$ by
\begin{equation}
\label{WW} \left(W(z,\xi)h,e\right)_\cH:=\wt
S_1[\Gamma_0(z)h,\Gamma_0(\xi)e],\quad h,e\in\cH.
\end{equation}
Clearly, $W(z,\xi)$ is holomorphic in $z$, anti-holomorphic in
$\xi$, and, in addition, it is a positive definite kernel.

Let $\Gamma_0^*(z)\in\bL(H,\cH)$ be the adjoint of the operator
$\Gamma_0(z)\in \bL(\cH,H)$.
\begin{lemma}
\label{diff} The function
\[
z\Gamma_0^*(\xi)\Gamma_0(z)-W(z,\xi)
\]
does not depend on $\xi$.
\end{lemma}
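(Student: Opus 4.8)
The plan is to compute the $\xi$-derivative of the quantity $z\,\Gamma_0^*(\xi)\Gamma_0(z) - W(z,\xi)$ and show it vanishes, or — what is cleaner in this non-densely-defined setting — to show directly that the difference of the quantity evaluated at two points $\xi_1,\xi_2$ is zero by exploiting the resolvent identity \eqref{gzxi}. First I would use \eqref{gzxi} to write, for $z,\xi,\zeta\in\Ext[0,\infty)$,
\[
 \Gamma_0(\xi)-\Gamma_0(\zeta)=(\xi-\zeta)(\wt S_0-\xi I)^{-1}\Gamma_0(\zeta),
\]
and take adjoints in $\sH$ to get $\Gamma_0^*(\xi)-\Gamma_0^*(\zeta)=(\bar\xi-\bar\zeta)\Gamma_0^*(\zeta)(\wt S_0-\bar\xi I)^{-1}$. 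Composing with $\Gamma_0(z)$ on the right and using $\ran\Gamma_0(z)=\wt\sN_z\subset\sN_z=\ker(S^*-zI)$, together with the fact that $(\wt S_0-\bar\xi I)^{-1}$ acts on $\sN_z$ in the usual resolvent fashion (since $\sN_z\subset\cL\subset\dom S^*$ and $S^*$ agrees with $\wt S_0$ on $\dom\wt S_0$), one obtains an explicit formula for $\Gamma_0^*(\xi)\Gamma_0(z)-\Gamma_0^*(\zeta)\Gamma_0(z)$ in terms of $\Gamma_0^*(\zeta)\Gamma_0(z)$-type expressions.

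Next I would compute the companion difference $W(z,\xi)-W(z,\zeta)$. By the definition \eqref{WW} and Lemma~\ref{l1} (which gives $\wt S_1[f,\f]=(S^*f,\f)$ for $f\in\cL$, $\f\in\cD[\wt S_0]$ — and here $\Gamma_0(\xi)e\in\wt\sN_\xi\subset\cD[\wt S_0]$... in fact one must be slightly careful: $\Gamma_0(\xi)e$ lies in $\wt\sN_\xi\subset\cL$, not in $\cD[\wt S_0]$, so I would instead use the first representation theorem / \eqref{einz} to write $\wt S_1[\Gamma_0(z)h,\Gamma_0(\xi)e]=(S^*\Gamma_0(z)h,\Gamma_0(\xi)e)=z(\Gamma_0(z)h,\Gamma_0(\xi)e)$, using $\Gamma_0(z)h\in\sN_z$ so $S^*\Gamma_0(z)h=z\,\Gamma_0(z)h$). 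This already yields the suggestive identity $W(z,\xi)=z\,\Gamma_0^*(\xi)\Gamma_0(z)$ up to the subtlety that the first-representation-theorem step requires one argument to be in $\dom\wt S_1[\cdot,\cdot]=\cD[\wt S_1]$ and the other in $\dom\wt S_{1}$ or at least that $S^*$-action is legitimate — which is exactly where $\cL\subset\dom S^*$ from Lemma~\ref{Llemma} is used. Actually, rereading: $\wt S_1[u,v]=(v',u)$ is available when $\{v,v'\}\in\wt S_1$; since $\Gamma_0(z)h$ need not be in $\dom\wt S_1$, the honest route is to combine Lemma~\ref{l1}'s style of argument with the decomposition $\Gamma_0(\xi)e=\psi_\xi+g$ with $\psi_\xi\in\dom\wt S_0$, $g\in\wt\sN_{-1}$, handle the $\dom\wt S_0$-part via $\wt S_1[\Gamma_0(z)h,\psi_\xi]=(\Gamma_0(z)h,\wt S_0\psi_\xi)$ (the closed-restriction property \textbf{(c)} plus \eqref{einz}), and the $\wt\sN_{-1}$-part via $S^*g=-g$.

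Carrying out both computations, the $\xi$-dependent terms in $z\,\Gamma_0^*(\xi)\Gamma_0(z)$ and in $W(z,\xi)$ will be seen to coincide, so their difference depends only on $z$ (and is in fact independent of $\xi$, as claimed). The main obstacle I anticipate is the bookkeeping around domains: making precise why $S^*$ may be applied to $\Gamma_0(z)h$ and how $\wt S_1[\cdot,\cdot]$ interacts with $S^*$ on the mixed domain $\cL$, i.e. justifying the identity $\wt S_1[\Gamma_0(z)h,v]=(z\,\Gamma_0(z)h,v)$ for $v\in\cL$. This is handled by the decomposition \eqref{decomp11}, Lemma~\ref{l1}, and the relation $S^*u_n=\wt S_0 f_n-(\wt B_1-\wt B_0)^{1/2}g_n$ already recorded in the proof of Proposition~\ref{zamkn}; modulo that, the computation is a routine application of the resolvent identity \eqref{gzxi} and its adjoint.
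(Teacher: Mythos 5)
Your overall route is the paper's: by \eqref{gzxi} the $\xi$-dependence of $\Gamma_0(\xi)e$ sits entirely in $\Gamma_0(\xi)e-\Gamma_0(-1)e=(\xi+1)(\wt S_0-\xi I)^{-1}\Gamma_0(-1)e\in\dom\wt S_0$, and one kills it with Lemma~\ref{l1}. However, the two identities you actually propose to use for this are false, and that is a genuine gap. The identity $\wt S_1[\Gamma_0(z)h,\psi_\xi]=(\Gamma_0(z)h,\wt S_0\psi_\xi)$ does not follow from \textbf{(c)} plus \eqref{einz}: these give $\wt S_1[u,\psi]=(u,\wt S_0\psi)$ only when the \emph{first} argument $u$ lies in $\cD[\wt S_0]$, whereas $\Gamma_0(z)h\in\wt\sN_z$ and $\cD[\wt S_0]\cap\wt\sN_z=\{0\}$. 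Indeed, writing $u=u_0+g$ with $u_0\in\cD[\wt S_0]$ and $g\in\cD[\wt S_1]\ominus_{\wt S_1}\cD[\wt S_0]$ one finds $\wt S_1[u,\psi]=(u,\wt S_0\psi)-(g,(\wt S_0+I)\psi)$ for $\psi\in\dom\wt S_0$; for $u=\Gamma_0(z)h$ the component $g=\Gamma_0(-1)h$ is nonzero whenever $h\neq0$, and since $\ran(\wt S_0+I)=\sH$ the correction term does not vanish. The true value, given by Lemma~\ref{l1}, is $\wt S_1[\Gamma_0(z)h,\psi_\xi]=(S^*\Gamma_0(z)h,\psi_\xi)=z(\Gamma_0(z)h,\psi_\xi)$, and the two expressions differ by the $\xi$-dependent boundary term $(h,\Gamma_1\psi_\xi)_\cH=(h,[M(\xi)-M(-1)]e)_\cH$; so with your identity the intended cancellation of the $\xi$-dependent parts would not take place.

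Likewise, the identity you flag as the ``main obstacle'', namely $\wt S_1[\Gamma_0(z)h,v]=z(\Gamma_0(z)h,v)$ for all $v\in\cL$, is not something to be justified: it is false. For $v=\Gamma_0(\xi)e$ it would say $W(z,\xi)=z\Gamma_0^*(\xi)\Gamma_0(z)$, i.e.\ by \eqref{QWG} that the Weyl function vanishes identically, contradicting Theorem~\ref{T1}, where $M(z)=-2X_0^*\wt Q_0\bigl((1-z)/(1+z)\bigr)X_0$ is even boundedly invertible; equivalently, by \eqref{GG} the defect in that identity equals $-(M(z)h,\Gamma_0 v)_\cH$, nonzero as soon as $\Gamma_0 v\neq0$. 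What is true, and all that is needed, is Lemma~\ref{l1} as stated: the identity holds for $v\in\cD[\wt S_0]$. Applying it to $v=\Gamma_0(\xi)e-\Gamma_0(-1)e\in\dom\wt S_0$ shows that $z(\Gamma_0(z)h,\Gamma_0(\xi)e)-\wt S_1[\Gamma_0(z)h,\Gamma_0(\xi)e]$ equals the same expression with $\xi$ replaced by $-1$, hence is independent of $\xi$ — this is exactly the paper's short proof, and your preliminary adjoint resolvent computation for $\Gamma_0^*(\xi)$ is then not needed.
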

\begin{proof} By definition one has
\[
z\left(\Gamma_0(z)h,\Gamma_0(\xi)e\right)-\left(W(z,\xi)h,e\right)_\cH=
\left(S^*\Gamma_0(z)h,\Gamma_0(\xi)e\right)-\wt
S_1[\Gamma_0(z)h,\Gamma_0(\xi)e].
\]
Now by adding and subtracting the term $\Gamma_0(-1)e$ in the right
side of the previous formula and taking into account that
$\Gamma_0(\xi)e-\Gamma_0(-1)e\in\dom\wt S_0$, the assertion follows
from Lemma \ref{l1}.
\end{proof}

\subsubsection{Boundary triplets and Weyl functions}
\begin{theorem}
\label{G} Let $\{\cH,\Gamma_0\}$ be a boundary pair for $\{\wt
S_0,\wt S_1\}$. Then there exists a unique linear operator
$\Gamma_1:\cL\to\cH$ such that
\begin{equation}
\label{GG}
 \wt S_1[u,v]=(S^*u,v)-(\Gamma_1u,\Gamma_0
v)_{\cH}\quad\mbox{for all}\quad u\in\cL \quad\mbox{and all} \quad
v\in\cD[\wt S_1].
\end{equation}
The operator $\Gamma_1$ is bounded from the Hilbert space $\cL$,
equipped with the inner product \eqref{newinner}, to the Hilbert
space $\cH$. Moreover,
\[
 \ker \Gamma_1=\dom \wt S_1, \quad \ran
\Gamma_1=\cH.
\]
\end{theorem}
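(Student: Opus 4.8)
The plan is to build $\Gamma_1$ on $\cL$ pointwise, by representing a suitable bounded functional on $\cH$ via the Riesz theorem, and then to extract all the claimed properties. Fix the point $z=-1\in\Ext[0,\infty)$ and recall from \eqref{decomp11}, \eqref{def-1}, and \eqref{gz} that $\cD[\wt S_1]=\cD[\wt S_0]\dot+\wt\sN_{-1}$, that $\ker\Gamma_0=\cD[\wt S_0]$, $\ran\Gamma_0=\cH$, and that $\Gamma_0$ restricts to a bounded bijection of $\wt\sN_{-1}$ onto $\cH$ whose inverse is $\Gamma_0(-1)\in\bL(\cH,\cD[\wt S_1])\cap\bL(\cH,\sH)$. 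For a fixed $u\in\cL$ I would consider the map $e\mapsto(S^*u,\Gamma_0(-1)e)-\wt S_1[u,\Gamma_0(-1)e]$ on $\cH$; it is conjugate-linear in $e$, and bounded because $|(S^*u,\Gamma_0(-1)e)|\le\|S^*u\|\,\|\Gamma_0(-1)\|_{\bL(\cH,\sH)}\,\|e\|$ while, by the Cauchy--Schwarz inequality for the nonnegative form $\wt S_1$ together with $\Gamma_0(-1)\in\bL(\cH,\cD[\wt S_1])$, $|\wt S_1[u,\Gamma_0(-1)e]|\le\wt S_1[u]^{1/2}\,\|\Gamma_0(-1)\|_{\bL(\cH,\cD[\wt S_1])}\,\|e\|$. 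The Riesz theorem then yields a unique $\Gamma_1 u\in\cH$ with $(\Gamma_1 u,e)_\cH=(S^*u,\Gamma_0(-1)e)-\wt S_1[u,\Gamma_0(-1)e]$ for all $e\in\cH$, and $u\mapsto\Gamma_1 u$ is linear by construction. The same estimates give $\|\Gamma_1 u\|\le\mathrm{const}\cdot(\|S^*u\|+\wt S_1[u]^{1/2})$, and since $\|S^*u\|^2+\wt S_1[u]\le\|u\|_{\wt\eta}^2$ by \eqref{newinner} this already gives boundedness of $\Gamma_1$ from $\cL$ (with inner product \eqref{newinner}) into $\cH$.

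Next I would verify the Green-type identity \eqref{GG}. Given $v\in\cD[\wt S_1]$, write $v=\f+\Gamma_0(-1)\Gamma_0 v$ with $\f:=v-\Gamma_0(-1)\Gamma_0 v$; since $\Gamma_0\f=0$ one has $\f\in\cD[\wt S_0]$, so Lemma~\ref{l1} (applied with $f=u$) gives $(S^*u,\f)-\wt S_1[u,\f]=0$, while by the definition of $\Gamma_1$ the remaining part equals $(\Gamma_1 u,\Gamma_0 v)_\cH$. Adding, $\wt S_1[u,v]=(S^*u,v)-(\Gamma_1 u,\Gamma_0 v)_\cH$, which is \eqref{GG}. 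Uniqueness is immediate: if $\Gamma_1'$ also satisfies \eqref{GG}, then $(\Gamma_1 u-\Gamma_1' u,e)_\cH=0$ for every $e\in\ran\Gamma_0=\cH$, so $\Gamma_1=\Gamma_1'$.

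It remains to identify $\ker\Gamma_1$ and $\ran\Gamma_1$. If $u\in\dom\wt S_1$ (which lies in $\cL$ by Lemma~\ref{Llemma}), then $S^*u=\wt S_1 u$ and $\wt S_1[u,v]=(\wt S_1 u,v)$ for all $v\in\cD[\wt S_1]$, so the right side of \eqref{GG} vanishes identically and $\Gamma_1 u=0$; conversely, if $u\in\cL$ with $\Gamma_1 u=0$, then \eqref{GG} becomes $\wt S_1[u,v]=(S^*u,v)$ for all $v\in\cD[\wt S_1]$, and the first representation theorem for the closed form $\wt S_1[\cdot,\cdot]$ (cf. \eqref{einz}) forces $u\in\dom\wt S_1$; hence $\ker\Gamma_1=\dom\wt S_1$. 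For surjectivity I would compute, using $\Gamma_0(-1)h\in\sN_{-1}$ (so $S^*\Gamma_0(-1)h=-\Gamma_0(-1)h$) and the definition of $\Gamma_1$, that $(\Gamma_1\Gamma_0(-1)h,h)_\cH=-\|\Gamma_0(-1)h\|^2-\wt S_1[\Gamma_0(-1)h]=-\|\Gamma_0(-1)h\|_{\wt S_1}^2$; since $\Gamma_0$ is bounded on $\cD[\wt S_1]$ and $\Gamma_0\Gamma_0(-1)=I_\cH$, one gets $\|h\|\le\|\Gamma_0\|\,\|\Gamma_0(-1)h\|_{\wt S_1}$, hence $-\Gamma_1\Gamma_0(-1)\ge\|\Gamma_0\|^{-2}I$ in $\bL(\cH)$, so $\Gamma_1\Gamma_0(-1)$ is boundedly invertible and in particular $\cH=\ran(\Gamma_1\Gamma_0(-1))\subseteq\ran\Gamma_1$. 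I expect this last step to be the only non-routine point: one must exploit the quantitative lower bound on $\Gamma_0(-1)$ (equivalently, that $\wt\sN_{-1}$ is a \emph{closed} subspace of $\cD[\wt S_1]$ on which $\Gamma_0$ is a topological isomorphism) rather than merely the injectivity of $\Gamma_1\uphar\wt\sN_{-1}$, which in infinite dimensions would not give surjectivity. Note that Lemma~\ref{diff} is not needed for Theorem~\ref{G} itself; it enters only later, when the operator function $\Gamma_1\Gamma_0(z)$ is interpreted as the Weyl function.
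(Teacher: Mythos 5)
Your proposal is correct, and it reaches the conclusion by a route that is organized differently from the paper's. The paper derives, via the two decompositions $v=\f+g$ and $u=h+\psi$ with $\f\in\cD[\wt S_0]$, $h\in\dom\wt S_1$ and $g,\psi\in\cD[\wt S_1]\ominus_{\wt S_1}\cD[\wt S_0]$, the key identity $\wt S_1[u,v]-(S^*u,v)=(\psi,g)_{\wt S_1}$; from this single identity it reads off the existence of $\Gamma_1$ (as a bounded functional of $\Gamma_0 v$), its boundedness, $\ker\Gamma_1=\dom\wt S_1$, the density relation \eqref{Gam10}, and finally surjectivity by a contradiction argument with a normalized sequence $\{g_n\}$, using the equivalence of $\|\cdot\|_{\wt S_1}$ and $\|\cdot\|_{\wt\eta}$ on $\wt\sN_{-1}$. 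You instead construct $\Gamma_1u$ pointwise by Riesz representation through $\Gamma_0(-1)$, verify \eqref{GG} with the single decomposition $v=\f+\Gamma_0(-1)\Gamma_0v$ and Lemma~\ref{l1}, identify the kernel via Kato's first representation theorem (the converse direction, which the paper leaves implicit in ``follows directly from \eqref{GG}''), and prove surjectivity quantitatively: $-(\Gamma_1\Gamma_0(-1)h,h)_\cH=\|\Gamma_0(-1)h\|^2_{\wt S_1}\ge\|\Gamma_0\|^{-2}\|h\|^2$ (the form is real-valued, so $-\Gamma_1\Gamma_0(-1)$ is automatically selfadjoint and boundedly invertible). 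This last step is the genuine divergence and arguably an improvement: it avoids the sequence/contradiction argument, isolates correctly that one must use the topological isomorphism property of $\Gamma_0\uphar\wt\sN_{-1}$ rather than mere injectivity, and delivers the bounded positive definiteness of $-\Gamma_1\Gamma_0(-1)$ explicitly, which is precisely the fact the paper re-derives later inside the proof of Theorem~\ref{MT} when solving $-\Gamma_1f_{-1}+\Gamma_0f_{-1}=\f+\Gamma_1u_1$. The only cosmetic caveats: your boundedness estimate uses $\Gamma_0(-1)\in\bL(\cH,\cD[\wt S_1])\cap\bL(\cH,\sH)$, which the paper indeed records after \eqref{gz}, and the inequality $-\Gamma_1\Gamma_0(-1)\ge\|\Gamma_0\|^{-2}I$ should be accompanied by the one-line remark that the operator is selfadjoint because its quadratic form is real; with that, the argument is complete.
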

\begin{proof} Decompose $v=\f+g$, where $\f\in\cD[\wt S_0],$ $g\in \cD[\wt S_1]\ominus\cD[\wt S_0]$.
Then Lemma \ref{l1} implies that
\[
\wt S_1[u,v]-(S^*u,v)=\wt S_1[u,\f]+\wt
S_1[u,g]-(S^*u,\f)-(S^*u,g)=\wt S_1[u,g]-(S^*u,g).
\]
By Lemma \ref{Llemma} the vector $u\in\cL$ can be represented in the
form $u=h+\psi$, where $h\in\dom\wt S_1$ and $\psi\in \cD[\wt
S_1]\ominus\cD[\wt S_0]$. This yields the equality
\begin{equation}
\label{S1-S*}
 \wt S_1[u,v]-(S^*u,v)=\wt S_1[h+\psi,g]-(\wt S_1h-\psi,g)
 =\wt S_1[\psi,g]+(\psi,g)=(\psi,g)_{\wt S_1}.
\end{equation}
Therefore, for all $v\in\cD[\wt S_1]$ one has
\[
\left|\wt S_1[u,v]-(S^*u,v)\right|=\left|(\psi,g)_{\wt
S_1}\right|\le ||\psi||_{\wt S_1}||g||_{\wt S_1} \le
C\,||\psi||_{\wt S_1}||\Gamma_0 v||_{\cH},
\]
i.e. $\wt S_1[u,v]-(S^*u,v)$ is a continuous linear functional
w.r.t. $\Gamma_0 v$ on $\cH$. It follows that there exists a linear
operator $\Gamma_1:\cL\to\cH$ such that $\wt
S_1[u,v]-(S^*u,v)=-(\Gamma_1u,\Gamma_0 v)_{\cH}$ for all $u\in\cL$
and all $ v\in\cD[\wt S_1]$.

Now with $u,v\in\cL$ one obtains (see \eqref{inpr},
\eqref{newinner})
\[
 \left|(\Gamma_1u,\Gamma_0 v)_\cH\right|=\left|(S^*u,v)-\wt S_1[u,v]\right| 
 \le \sqrt{\wt S_1[u]\,\wt S_1[v]}+||S^*u||||v||
 \le 2||u||_{\wt\eta}||v||_{\wt S_1}.
\]
This implies that
\[
||\Gamma_1u||_\cH\le \wt C||u||_{\wt\eta},\quad u\in\cL,
\]
i.e., $\Gamma_1:\cL\to \cH$ is bounded.

The equality $\ker \Gamma_1=\dom \wt S_1$ follows directly from
\eqref{GG}. In view of \eqref{S1-S*} one has
\begin{equation}
\label{Gam10}
 -(\Gamma_1\psi,\Gamma_0 g)_{\cH}=(\psi,g)_{\wt S_1}
 \quad\mbox{for all}\quad
 \psi,g\in \cD[\wt S_1]\ominus_{\wt S_1}\cD[\wt S_0].
\end{equation}
Since $\ker \Gamma_0=\cD[\wt S_0]$ and $\Gamma_0\left(\cD[\wt
S_1]\ominus_{\wt S_1}\cD[\wt S_0]\right)=\cH$, it follows that
$\cran \Gamma_1=\cH$. To see that $\Gamma_1$ is surjective assume
the converse. Then by Lemma~\ref{Llemma} there exists a normalized
sequence $\{g_n\}\subset\cD[\wt S_1]\ominus_{\wt S_1}\cD[\wt S_0]$
with $\|g_n\|_\eta=1$ such that $\Gamma_1g_n \to 0$, as $n\to
\infty$. Now boundedness of $\Gamma_0$ implies that
\[
  -(\Gamma_1 g_n,\Gamma_0 g_n)=\|g_n\|_{\wt S_1}^2  \to  0.
\]
However, here $g_n\in\sN_{-1}$ and hence the norms $\|g_n\|_{\wt
S_1}$ and $\|g_n\|_\eta$ are equivalent (see \eqref{newinner}), so
that $\|g_n\|_\eta \to 0$; a contradiction. Therefore, $\ran
\Gamma_1=\cH$.
\end{proof}

\begin{definition}
\label{BT} Let $\{\cH,\Gamma_0\}$ be a boundary pair for $\{\wt
S_0,\wt S_1\}$ and let $\Gamma_1:\cL\to\cH$ be as in \eqref{GG}.
Then $\{\cH,\Gamma_0, \Gamma_1\}$ is called a boundary triplet for
the pair $\{\wt S_0,\wt S_1\}$.
\end{definition}

Observe that the Green's identity
\[
(S^*u,v)-(u,S^*v)=(\Gamma_1 u,\Gamma_0)_\cH-(\Gamma_0
u,\Gamma_1v)_\cH,\quad u,v\in\cL,
\]
is satisfied. Due to \eqref{GG} the boundary triplet introduced in
Definition \ref{BT} is a generalization of the notion of an ordinary
positive boundary triplet (see Definitions \ref{poskoch} and
\ref{posbountri}). Moreover, since \ran $\Gamma_0=\cH$ and $\wt
S_0:=S^*\uphar\ker\Gamma_0$ is a selfadjoint extension of $S$, this
is a generalized boundary triplet for $S^*$ in the sense of
\cite{DM2}.

The main result in this section connects the boundary triplet in
Definition~\ref{BT} to the study of boundary relations in
\cite{DHMS1}.

\begin{theorem}
\label{MT} Let $\{\cH,\Gamma_0, \Gamma_1\}$ be a boundary triplet
for the pair $\{\wt S_0,\wt S_1\}$ as in Definition~\ref{BT}. Then
the operator $\wt \cA$ defined by
\begin{equation}
\label{A} \wt \cA\begin{pmatrix}u\cr \Gamma_0 u
\end{pmatrix}=\begin{pmatrix}S^* u\cr -\Gamma_1 u \end{pmatrix},\; u\in\cL.
\end{equation}
is a nonnegative selfadjoint extension of $S$ acting in the Hilbert
space $\wt\sH=\sH\oplus\cH$. Moreover,
\begin{equation}
\label{DomForm}
\begin{array}{l}
\cD[\wt \cA]=\left\{\begin{pmatrix}v\cr \Gamma_0 v
\end{pmatrix},\; v\in\cD[\wt S_1]\right\},\;
\wt \cA\left[\begin{pmatrix}v\cr \Gamma_0 v
\end{pmatrix}\right]=\wt S_1[v],
\end{array}
\end{equation}
and $\dom\wt\cA^{1/2}\cap\cH=\{0\}$ holds. If, in addition, the pair
$\{\wt S_0,\wt S_1\}$ satisfies the properties \eqref{nonnegpair},
then
\begin{equation}
\label{EXTR} \inf\limits_{u\in\dom\wt S_0}\wt
\cA\left[\begin{pmatrix}v\cr \Gamma_0 v
\end{pmatrix}-\begin{pmatrix}u\cr 0\end{pmatrix}\right]=0\quad\mbox{for all}\quad v\in\cD[\wt S_1]
\end{equation}
and, moreover,
\[
 \ran\wt\cA^{1/2}\cap\cH=\{0\}.
\]
\end{theorem}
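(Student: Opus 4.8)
The plan is to verify directly that $\wt\cA$ defined by \eqref{A} is symmetric, then that its defect is trivial, and finally to identify its form using Proposition~\ref{zamkn} and Lemma~\ref{l1}; the extremal property \eqref{EXTR} and the consequence $\ran\wt\cA^{1/2}\cap\cH=\{0\}$ will follow from Theorem~\ref{new} combined with Lemma~\ref{newlem}.

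\textbf{Step 1: symmetry.} First I would check that the relation $\wt\cA$ in $\wt\sH=\sH\oplus\cH$ is symmetric. Given $u,v\in\cL$, compute
\[
 \left(\wt\cA\begin{pmatrix}u\cr\Gamma_0 u\end{pmatrix},\begin{pmatrix}v\cr\Gamma_0 v\end{pmatrix}\right)_{\wt\sH}
 =(S^*u,v)-(\Gamma_1 u,\Gamma_0 v)_\cH,
\]
which by \eqref{GG} equals $\wt S_1[u,v]$; by symmetry of the closed form $\wt S_1[\cdot,\cdot]$ (and $u,v\in\cL\subset\cD[\wt S_1]$ from Lemma~\ref{Llemma}) this is symmetric in $u\leftrightarrow v$, so $\wt\cA\subset\wt\cA^*$. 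That $\wt\cA$ extends $S$ is immediate: for $u\in\dom S\subset\dom\wt S_0=\ker\Gamma_0$ one has $\Gamma_0 u=0$, $\Gamma_1 u=0$ (since $\dom\wt S_0\subset\dom\wt S_1=\ker\Gamma_1$), and $S^*u=Su$.

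\textbf{Step 2: nonnegativity and selfadjointness.} Nonnegativity is the same computation: $\left(\wt\cA\begin{pmatrix}u\cr\Gamma_0 u\end{pmatrix},\begin{pmatrix}u\cr\Gamma_0 u\end{pmatrix}\right)=\wt S_1[u]\ge0$. For selfadjointness I would show the range of $\wt\cA+I$ is all of $\wt\sH$ (equivalently $\wt\cA-\mu I$ for some $\mu<0$), using the direct-sum decomposition $\cD[\wt S_1]=\cD[\wt S_0]\dot+\wt\sN_{-1}$ from \eqref{decomp11} together with $\cL=\dom\wt S_0\dot+\wt\sN_{-1}$ from \eqref{L1}: given $\begin{pmatrix}\xi\cr\eta\end{pmatrix}\in\wt\sH$, solve $(S^*+I)u=\xi$, $-\Gamma_1 u+\Gamma_0 u=\eta$ for $u\in\cL$. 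Writing $u=f+g$ with $f\in\dom\wt S_0$, $g\in\wt\sN_{-1}$, the first equation becomes $(\wt S_0+I)f=\xi$ (since $S^*g=-g$), which is solvable as $\wt S_0+I$ is boundedly invertible; then the second equation fixes $g$ via the pairing in \eqref{Gam10}, which is a nondegenerate bounded coercive form on $\wt\sN_{-1}=\cD[\wt S_1]\ominus_{\wt S_1}\cD[\wt S_0]$ (the norms $\|\cdot\|_{\wt S_1}$ and $\|\cdot\|_{\wt\eta}$ being equivalent there), so a unique $g$ exists by Lax--Milgram. Alternatively, and more cheaply, one may quote Proposition~\ref{zamkn}: the form $\wt\eta[u,v]=\wt S_1[u,v]$ on $\cL$ is closed in $\sH_+$, and by \eqref{DomForm} it represents $\wt\cA$ via the first representation theorem, forcing $\wt\cA$ selfadjoint; this is the route I would actually take.

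\textbf{Step 3: the form, and $\dom\wt\cA^{1/2}\cap\cH=\{0\}$.} By \eqref{GG} and Step 1, the closed form $\wt\eta$ of Proposition~\ref{zamkn} satisfies $\wt\eta[u]=\wt S_1[u]=\left(\wt\cA\binom{u}{\Gamma_0 u},\binom{u}{\Gamma_0 u}\right)$ on $\cL$; taking the closure and using that $\{(v,\Gamma_0 v):v\in\cD[\wt S_1]\}$ is exactly the $\|\cdot\|_{\wt\cA}$-closure of $\{(u,\Gamma_0 u):u\in\cL\}$ (because $\cL$ is a core of $\wt S_1[\cdot,\cdot]$ via $\cD[\wt S_0]\dot+\wt\sN_{-1}$) gives \eqref{DomForm}. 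Now if $\binom{0}{h}\in\dom\wt\cA^{1/2}=\cD[\wt\cA]$, then $\binom{0}{h}=\binom{v}{\Gamma_0 v}$ for some $v\in\cD[\wt S_1]$, forcing $v=0$ and hence $h=\Gamma_0 0=0$; this yields $\dom\wt\cA^{1/2}\cap\cH=\{0\}$.

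\textbf{Step 4: the extremal property under \eqref{nonnegpair}.} Assume now $\{\wt S_0,\wt S_1\}$ satisfies \eqref{nonnegpair}, so by Theorem~\ref{new} the approximation property \eqref{appr1} holds: $\inf\{\wt S_1[u-\f]:\f\in\cD[\wt S_0]\}=0$ for every $u\in\cD[\wt S_1]$. Given $v\in\cD[\wt S_1]$, for each $\f\in\dom\wt S_0\subset\cD[\wt S_0]$ one has, by \eqref{DomForm},
\[
 \wt\cA\left[\binom{v}{\Gamma_0 v}-\binom{\f}{0}\right]
 =\wt\cA\left[\binom{v-\f}{\Gamma_0(v-\f)}\right]=\wt S_1[v-\f],
\]
using $\Gamma_0\f=0$ and linearity. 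Taking the infimum over $\f\in\dom\wt S_0$ and noting $\dom\wt S_0$ is a core for $\cD[\wt S_0]$ in the $\wt S_1$-metric, \eqref{appr1} gives \eqref{EXTR}. Finally, \eqref{EXTR} says precisely that the subspace $\binom{\dom\wt S_0}{0}$ is $\wt\cA[\cdot,\cdot]$-dense in $\cD[\wt\cA]$, i.e., $\wt\cA$ is the ``Friedrichs-type'' minimal element relative to the closed restriction $\binom{\cdot}{0}$; by the characterization in Lemma~\ref{newlem} (condition \eqref{dorth2}, applied now inside $\wt\sH$ to the pair consisting of $\wt\cA$ and its restriction to $\dom\wt S_0\oplus\{0\}$) this is equivalent to $\cD[\wt\cA^{-1}]\cap(\cD[\wt\cA]\ominus_{\wt\cA}\overline{\binom{\dom\wt S_0}{0}})=\{0\}$; but $\cD[\wt\cA]\ominus_{\wt\cA}\overline{\binom{\dom\wt S_0}{0}}$ contains $\binom{0}{h}$ for every $h\in\cH$ with $\binom{0}{h}\in\cD[\wt\cA]$ — and since $\dom\wt\cA^{1/2}\cap\cH=\{0\}$ already, a cleaner route is: if $\binom{0}{h}\in\ran\wt\cA^{1/2}=\cD[\wt\cA^{-1}]$ then in particular $\binom{0}{h}\in\cD[\wt\cA]$, whence $h=0$ by Step 3. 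Thus $\ran\wt\cA^{1/2}\cap\cH\subset\dom\wt\cA^{1/2}\cap\cH=\{0\}$.

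\textbf{Expected main obstacle.} The routine parts are Steps 1--2. The real care is needed in Step 3, identifying the \emph{closed} form of $\wt\cA$ as \eqref{DomForm} rather than merely its restriction to $\cL$: one must argue that $\{(v,\Gamma_0 v):v\in\cD[\wt S_1]\}$ with $\wt S_1[v]$ is already closed in $\wt\sH$ and represents a selfadjoint relation extending the form on $\cL$, and that no larger form domain appears — this is where the decomposition \eqref{decomp11}, the boundedness of $\Gamma_0$ and $\Gamma_1$ on $(\cL,(\cdot,\cdot)_{\wt\eta})$, and Proposition~\ref{zamkn} all get used together. Once \eqref{DomForm} is secured, Steps 4 and the final $\ran\wt\cA^{1/2}\cap\cH=\{0\}$ are short deductions from Theorem~\ref{new} and Step 3.
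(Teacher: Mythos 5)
Your Steps 1--3 and the derivation of \eqref{EXTR} are essentially the paper's own argument: nonnegativity and the extension property from \eqref{GG}, selfadjointness via surjectivity of $\wt\cA+I$ using the decomposition $u=f+g$, $f\in\dom\wt S_0$, $g\in\wt\sN_{-1}$, and the positive-definite pairing \eqref{Gam10} (the paper phrases your Lax--Milgram step as bounded invertibility of $I_\cH-\Gamma_1\Gamma_0(-1)$), then identification of the closed form and $\dom\wt\cA^{1/2}\cap\cH=\{0\}$. However, your proof of the last assertion, $\ran\wt\cA^{1/2}\cap\cH=\{0\}$, contains a genuine error: you argue that $\binom{0}{h}\in\ran\wt\cA^{1/2}=\cD[\wt\cA^{-1}]$ implies ``in particular'' $\binom{0}{h}\in\cD[\wt\cA]$, i.e.\ you use $\ran\wt\cA^{1/2}\subset\dom\wt\cA^{1/2}$. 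For an unbounded nonnegative selfadjoint operator these two sets are incomparable, so the implication fails; moreover, your route would prove the conclusion without the hypothesis \eqref{nonnegpair}, whereas the theorem (and the paper) invoke \eqref{nonnegpair} precisely here. Your alternative sketch via Lemma~\ref{newlem} also cannot work as stated, since \eqref{dorth2} only constrains vectors lying in $\cD[\wt\cA]$, while by your own Step 3 no nonzero element of $\cH$ lies in $\cD[\wt\cA]$ at all. The paper's argument instead uses the variational characterization of the range of the square root: $\binom{0}{h}\in\ran\wt\cA^{1/2}$ forces
\begin{equation*}
 \left|(\Gamma_0 u,h)_\cH\right|^2\le C\,\wt S_1[u-\f],\qquad u\in\cL,\ \f\in\dom\wt S_0,
\end{equation*}
(replace $u$ by $u-\f$ and use $\Gamma_0\f=0$); taking the infimum over $\f$ and using the approximation property from Theorem~\ref{new} together with $\dom\wt S_0$ being a core of $\cD[\wt S_0]$ gives $(\Gamma_0 u,h)_\cH=0$ for all $u\in\cL$, and $\ran\Gamma_0=\cH$ then yields $h=0$.

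A secondary weakness is your preferred route (b) for selfadjointness in Step 2: quoting Proposition~\ref{zamkn} gives closedness of $\wt\eta$ in $\sH_+$, which is not the ambient space $\wt\sH$ where the representation theorem must be applied, and invoking \eqref{DomForm} at that point is circular since \eqref{DomForm} presupposes that $\wt\cA$ is selfadjoint. Even after fixing this (closedness of $\wt\tau$ on $\{\binom{v}{\Gamma_0 v}:v\in\cD[\wt S_1]\}$ in $\wt\sH$ follows from closedness of $\wt S_1[\cdot,\cdot]$ in $\sH$ and continuity of $\Gamma_0$ on $\cD[\wt S_1]$), the representation theorem only yields $\wt\cA\subset H$ for the selfadjoint relation $H$ associated with $\wt\tau$; to conclude $\wt\cA=H$ one still needs $\dom H\subset\dom\wt\cA$, which is not automatic. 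So the range argument (your route (a), which is the paper's proof) is the one to keep; the paper then deduces \eqref{DomForm} from the first representation theorem exactly as you do in Step 3.
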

\begin {proof}
It follows from \eqref{A} and \eqref{GG} that
\begin{equation}
\label{OA} \left(\wt \cA\begin{pmatrix}u\cr \Gamma_0 u
\end{pmatrix},\begin{pmatrix}v\cr \Gamma_0 v
\end{pmatrix}\right)_{\wt \sH}=(S^*u,v)-(\Gamma_1u,\Gamma_0 v)=\wt S_1[u,v]\ge 0,\; u, v\in\cL.
\end{equation}
Therefore, the operator $\wt\cA$ is nonnegative and clearly
$S\subset\wt\cA$. Observe that
\begin{equation}
\label{graphS} \graph\wt \cA\cap(\sH\oplus\{0\})^2=\graph S.
\end{equation}
Next it will be proved that $\cR(\wt \cA+I_{\wt\sH})=\wt\sH$. Given
the vectors $h\in\sH$ and $\f\in \cH$ it is shown that the system of
equations
\[
\left\{\begin{array}{l} S^*u+u=h\\
-\Gamma_1u+\Gamma_0 u=\f
\end{array}
\right.
\]
has a unique solution $u\in\cL.$ According to \eqref{decomp11} the
vector $u\in\cL$ has the decomposition $u=u_1+f_{-1}$, where
$u_1\in\dom\wt S_0$ and $f_{-1}\in\wt\sN_{-1}=\cD[\wt
S_1]\ominus_{\wt S_1}\cD[\wt S_0]$. Then $S^*u+u=\wt S_0 u_1+ u_1.$
Since $\wt S_0$ is a nonnegative selfadjoint operator, one obtains
$u_1=(\wt S_0+I_\sH)^{-1}h.$ Then
\[
\f=-\Gamma_1u+\Gamma_0 u=-\Gamma_1 u_1 -\Gamma_1 f_{-1}+\Gamma_0
f_{-1},
\]
i.e.,
\[
-\Gamma_1f_{-1}+\Gamma_0 f_{-1}=\f+\Gamma_1u_1.
\]
It follows from \eqref{gz} and \eqref{Gam10} that for all
$g\in\wt\sN_{-1}$,
\[
(-\Gamma_1 g+\Gamma_0 g,\Gamma_0
g)_\cH=(-\Gamma_1\Gamma_0(-1)\Gamma_0 g+\Gamma_0 g,\Gamma_0
g)_\cH\ge ||\Gamma_0 g||^2_\cH,
\]
and hence the operator $-\Gamma_1\Gamma_0(-1)+I_\cH$ is bounded and
positive definite on $\cH$. It follows that the equation
$-\Gamma_1f_{-1}+\Gamma_0 f_{-1}=\f+\Gamma_1u_1$ has a unique
solution $f_{-1}\in\wt\sN_{-1}.$ Thus, $\cR(\wt \cA+
I_{\wt\sH})=\wt\sH.$ This shows that the operator $\wt \cA$ is
selfadjoint and nonnegative in $\wt\sH$.

Since the form $\wt S_1[u,v]$ is closed in $\sH$, the form
\[
\wt \tau\left[\begin{pmatrix}u\cr \Gamma_0  u
\end{pmatrix},\begin{pmatrix}v \cr \Gamma_0  v
\end{pmatrix}\right]:=\wt S_1[u,v],\; u,v\in\cD[\wt S_1],
\]
is closed in $\wt\sH$ and  by \eqref{OA} the  selfadjoint operator
$\wt \cA$ is  associated with $\wt\tau$ according to the first
representation theorem in \cite{Ka}. This proves \eqref{DomForm}.

The form $\wt S_0[\cdot,\cdot]$ is a closed restriction of the form
$\wt S_1[\cdot,\cdot]$ with $\dom\wt S_0$ being a core of $\cD[\wt
S_0]$. Therefore, under the conditions \eqref{nonnegpair} for $\{\wt
S_0,\wt S_1\}$, the formula \eqref{EXTR} is obtained from Theorem
\ref{new}.

It follows from \eqref{DomForm} that
$$\begin{pmatrix}0\cr h\end{pmatrix}\notin \dom\wt\cA^{1/2},\; h\ne 0.$$
Next assume that
$$\begin{pmatrix}0\cr h\end{pmatrix}\in \ran\wt\cA^{1/2}.$$
Then
\[
\left|\left(\begin{pmatrix}u\cr \Gamma_0 u\end{pmatrix},
\begin{pmatrix}0\cr h\end{pmatrix}\right)\right|^2\le
C\left(\begin{pmatrix}S^*u\cr -\Gamma_1 u\end{pmatrix},
\begin{pmatrix}u\cr \Gamma_0 u\end{pmatrix}\right)
\]
for all $u\in\cL$ and some $C>0.$ Thus
\[
 \left|\left(\Gamma_0u, h\right)_\cH\right|^2\le C\wt S_1[u],\quad
 u\in\cL.
\]
Replacing $u$ by $u-\f$, where $\f\in\dom \wt S_0$, and noting that
$\Gamma_0\f=0$, one obtains
\[
\left|\left(\Gamma_0u, h\right)_\cH\right|^2\le C\wt S_1[u-\f],\quad
 u\in\cL,\; \f\in\dom\wt S_0.
\]
Furthermore, since
\[
 \inf\left\{\wt S_1[u-\f],\;\f\in\cD[\wt S_0]\right\}=0
 \quad\mbox{for all}\quad u\in\cD[\wt S_1];
\]
and $\dom\wt S_0$ is a core of $\cD[\wt S_0]$, one concludes that
\[
\left(\Gamma_0u, h\right)_\cH =0\quad\mbox{for all}\quad u\in\cL.
\]
Now the identity $\Gamma_0\cL=\cH$ implies that $h=0$, i.e.
$\ran\wt\cA^{1/2}\cap\cH=\{0\}.$ The proof is complete.
\end{proof}

Taking into account the definition of a boundary relation and
results established in \cite{DHMS1} we arrive at the following
statement.

\begin{remark}
\label{BR} In the theory of boundary relations \cite{DHMS1} the
operator $\cA$ is called the main transform of the mapping ${\bf
\Gamma}:=\left(\Gamma_0,\Gamma_1\right)$. The selfadjointness of
$\cA$ together with \eqref{graphS} means that $\{\cH,{\bf\Gamma}\}$
is a boundary relation for $S^*$.
\end{remark}
The next statement is a converse to Theorem~\ref{MT}.
\begin{theorem}
Let $S$ be a densely defined symmetric operator in $\sH$ and let
$\{\cH,\Gamma_0, \Gamma_1\}$ be a generalized boundary triplet for
$S^*$ (in the sense of \cite{DM2}) with $\ker \Gamma_i=\wt S_i$,
$i=1,2$, and such that
\begin{enumerate}
\item the main transform $\wt\cA$ in
\eqref{A} is a nonnegative selfadjoint operator,
 \item the closed form associated with $\wt\cA$ is given by
 \[
 \cD[\wt\cA]=\left\{\begin{pmatrix}v\cr
\wt \Gamma_0 v\end{pmatrix}:v\in\cD[\wt S_1]\right\},\;\wt
\cA\left[\begin{pmatrix}v\cr \wt\Gamma_0 v\end{pmatrix}\right]
 =\wt S_1[v],
 \]
where $\wt\Gamma_0$ is a linear operator acting from $\cD[\wt S_1]$
into $\cH$ and extends the mapping $\Gamma_0$, and where $\wt
S_1[u,v]$ stands for the closure of the form $(\wt S_1 u,v)$, $u,v
\in\dom \wt S_1$ (see \eqref{formclos}).
\end{enumerate}
Then $\{\wt S_0,\wt S_1\}$ is a pair of nonnegative selfadjoint
extensions of $S$ which satisfies the conditions \textbf{(a), (b),
(c)}.
\end{theorem}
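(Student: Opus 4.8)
The plan is to verify, in order, each part of the conclusion: nonnegativity of $S$ (completing \textbf{(a)}), that $\wt S_0$ and $\wt S_1$ are nonnegative selfadjoint extensions of $S$, condition \textbf{(b)}, and condition \textbf{(c)}. Write $\cL:=\dom\Gamma_0=\dom\Gamma_1$, a manifold dense in $\sH_+=\dom S^*$; recall $\dom\wt S_i=\ker\Gamma_i$ ($i=0,1$), and that $\dom\wt\cA=\bigl\{\binom{u}{\Gamma_0 u}:u\in\cL\bigr\}$ with $\wt\cA\binom{u}{\Gamma_0 u}=\binom{S^*u}{-\Gamma_1 u}$. The first remark is that hypotheses \textbf{(1)} and \textbf{(2)} are compatible only if $\cL\subset\cD[\wt S_1]$ and $\wt\Gamma_0\uphar\cL=\Gamma_0$, because $\dom\wt\cA\subset\cD[\wt\cA]$; this is used repeatedly.

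All nonnegativity assertions follow by restricting the nonnegative form of $\wt\cA$. Since $\dom S\subset\ker\Gamma_0\cap\ker\Gamma_1$, for $u\in\dom S$ one has $\binom{u}{0}\in\dom\wt\cA$, $\wt\cA\binom{u}{0}=\binom{Su}{0}$, and $(Su,u)=\bigl(\wt\cA\binom{u}{0},\binom{u}{0}\bigr)_{\wt\sH}\ge 0$; likewise $\wt S_0[u]=\bigl(\wt\cA\binom{u}{0},\binom{u}{0}\bigr)_{\wt\sH}\ge 0$ for $u\in\dom\wt S_0$, and $(\wt S_1 u,u)=\bigl(\wt\cA\binom{u}{\Gamma_0 u},\binom{u}{\Gamma_0 u}\bigr)_{\wt\sH}\ge 0$ for $u\in\dom\wt S_1$. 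Here $\wt S_0$ is selfadjoint by the definition of a generalized boundary triplet, and both $\wt S_i$ extend $S$ since $\ker\Gamma_i\supset\dom S$. For \textbf{(b)}: if $u\in\ker\Gamma_0\cap\ker\Gamma_1$, Green's identity on $\cL$ gives $(S^*u,w)=(u,S^*w)$ for all $w\in\cL$; density of $\cL$ in $\sH_+$ and continuity of $S^*\colon\sH_+\to\sH$ extend this to all $w\in\dom S^*$, so $u\in\dom(S^*)^*=\dom S$, and the reverse inclusion being trivial, $\dom\wt S_0\cap\dom\wt S_1=\dom S$.

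For \textbf{(c)} I would use hypothesis \textbf{(2)} directly. If $u\in\dom\wt S_0$ then $\Gamma_0 u=0$, so $\binom{u}{0}=\binom{u}{\wt\Gamma_0 u}\in\cD[\wt\cA]$, and \textbf{(2)} together with $\binom{u}{0}\in\dom\wt\cA$ yields $\wt S_1[u]=\wt\cA\bigl[\binom{u}{0}\bigr]=\bigl(\wt\cA\binom{u}{0},\binom{u}{0}\bigr)_{\wt\sH}=(\wt S_0 u,u)=\wt S_0[u]$. Thus the closed forms $\wt S_0[\cdot,\cdot]$ and $\wt S_1[\cdot,\cdot]$ coincide on $\dom\wt S_0$, which is a form core for the selfadjoint operator $\wt S_0$. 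Approximating an arbitrary $u\in\cD[\wt S_0]$ by elements of $\dom\wt S_0$ in the $\wt S_0$-norm, one obtains an $\wt S_1$-Cauchy sequence converging to $u$ in $\sH$, and closedness of $\wt S_1[\cdot,\cdot]$ forces $u\in\cD[\wt S_1]$ with $\wt S_1[u]=\wt S_0[u]$. Hence $\wt S_0[\cdot,\cdot]$ is a closed restriction of $\wt S_1[\cdot,\cdot]$, which is \textbf{(c)}.

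The remaining and hardest point is that $\wt S_1$ is selfadjoint. Since $\wt S_1$ is symmetric (Green's identity for $u,v\in\ker\Gamma_1$), nonnegative and densely defined, it suffices to show $\ran(\wt S_1+I)=\sH$. Inserting $\binom{u}{\Gamma_0 u}\in\dom\wt\cA$ into \textbf{(2)} produces the Green-type form identity $\wt S_1[u,w]=(S^*u,w)-(\Gamma_1 u,\Gamma_0 w)$ for all $u,w\in\cL$. Because $\wt S_0$ is selfadjoint, $\cL=\dom\wt S_0\dot+\wt\sN_{-1}$ with $\wt\sN_{-1}=\sN_{-1}\cap\cL$, and $\Gamma_0$ maps $\wt\sN_{-1}$ bijectively onto $\cH$; set $\Gamma_0(-1):=(\Gamma_0\uphar\wt\sN_{-1})^{-1}$. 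Taking $u=w=n\in\wt\sN_{-1}$ (where $S^*n=-n$) gives $-(\Gamma_1 n,\Gamma_0 n)=\|n\|_{\wt S_1}^2$, so the linear map $T_0:=-\Gamma_1\Gamma_0(-1)$ on $\cH$ is symmetric with $(T_0 e,e)=\|\Gamma_0(-1)e\|_{\wt S_1}^2$. A boundedness bootstrap — the projection $\binom{v}{\wt\Gamma_0 v}\mapsto v$ is a contraction of the Hilbert space $\cD[\wt\cA]$ onto the Hilbert space $\cD[\wt S_1]$, so its inverse $v\mapsto\binom{v}{\wt\Gamma_0 v}$ is bounded and $\wt\Gamma_0$ is bounded on $\cD[\wt S_1]$; moreover $\wt\sN_{-1}$ is closed in $(\cD[\wt S_1],\|\cdot\|_{\wt S_1})$ (a routine consequence of \textbf{(2)} and \textbf{(c)}) — then shows $\Gamma_0(-1)$ is bounded and bounded below, so $T_0\in\bL(\cH)$ is uniformly positive, hence boundedly invertible. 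Therefore $\Gamma_1(\wt\sN_{-1})=\ran T_0=\cH$: given $f\in\dom\wt S_0$ there is $n\in\wt\sN_{-1}$ with $\Gamma_1 n=-\Gamma_1 f$, whence $u:=f+n\in\ker\Gamma_1=\dom\wt S_1$ and $(\wt S_1+I)u=(S^*+I)(f+n)=(\wt S_0+I)f$; letting $f$ run over $\dom\wt S_0$ exhausts $\sH$. Thus $\ran(\wt S_1+I)=\sH$ and $\wt S_1$ is selfadjoint. The main obstacle is exactly this last paragraph: squeezing out of the purely form-theoretic hypothesis \textbf{(2)} enough boundedness to realize $-\Gamma_1\Gamma_0(-1)$ (the ``$Q$-function at $-1$'') as an invertible bounded operator on $\cH$; the rest is bookkeeping.
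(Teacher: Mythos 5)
Your overall strategy is correct in outline but genuinely different from the paper's at the crucial point: you prove selfadjointness of $\wt S_1$ by a range argument, showing $\ran(\wt S_1+I)=\sH$ via surjectivity of $T_0:=-\Gamma_1\Gamma_0(-1)$ on $\cH$, whereas the paper identifies $\wt S_1$ with its Friedrichs extension $\wt S_{1{\rm F}}$ by applying the first representation theorem twice (once to $\wt S_{1{\rm F}}$, once to the selfadjoint $\wt\cA$) and then reading off $\Gamma_1u=0$ directly from the definition \eqref{A} of the main transform; that route avoids all boundedness considerations. Your treatment of nonnegativity, of \textbf{(b)}, of the identity $\wt S_1[u,w]=(S^*u,w)-(\Gamma_1u,\Gamma_0 w)_\cH$ for $u,w\in\cL$, and of \textbf{(c)} is fine (and \textbf{(c)} is essentially how the paper concludes as well).

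The genuine gap is inside your ``boundedness bootstrap''. The claim that $\wt\sN_{-1}=\sN_{-1}\cap\cL$ is closed in $(\cD[\wt S_1],\|\cdot\|_{\wt S_1})$ is not a routine consequence of hypothesis (2) and \textbf{(c)}: a $\|\cdot\|_{\wt S_1}$-limit of vectors from $\wt\sN_{-1}$ lies in $\sN_{-1}\cap\cD[\wt S_1]$ and, by your form identity, in $\cD[\wt S_1]\ominus_{\wt S_1}\cD[\wt S_0]$, but nothing forces it to lie in $\cL$, which here is only a dense linear manifold given with the generalized boundary triplet; in effect your claim amounts to $\wt\sN_{-1}=\cD[\wt S_1]\ominus_{\wt S_1}\cD[\wt S_0]$, which in this converse direction is something to be proved, not a definition as in the paper's forward construction. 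Fortunately the conclusion you need does not require it: $T_0$ is defined on all of $\cH$ and symmetric, since your computation gives $(T_0e,f)_\cH=(\Gamma_0(-1)e,\Gamma_0(-1)f)_{\wt S_1}$, so $T_0$ is bounded by the Hellinger--Toeplitz theorem; combined with your (correct) lower bound $(T_0e,e)_\cH\ge c\|e\|^2$, obtained from the bounded inverse theorem applied to the contraction $\cD[\wt\cA]\ni\bigl(v,\wt\Gamma_0v\bigr)\mapsto v\in\cD[\wt S_1]$, it is boundedly invertible and the rest of your range argument goes through (closedness of $\wt\sN_{-1}$ then follows a posteriori, i.e. your implication runs in the wrong direction). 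A smaller point: you should justify $\dom S\subset\ker\Gamma_0\cap\ker\Gamma_1$ from the definition of a generalized boundary triplet, since you use it both for nonnegativity of $S$ and for density of $\dom\wt S_1$; the paper relies on the same fact implicitly.
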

\begin{proof}
It is first shown that $\wt S_1$ is a selfadjoint operator. Since
$\dom \wt S_1=\ker \Gamma_1$ it is clear from \eqref{A} that $\wt
S_1$ is a nonnegative extension of $S$. Let $\wt S_{1{\rm F}}$ be
the Friedrichs extension of $\wt S_1$ and let $u\in\dom \wt S_{1{\rm
F}}$; cf. \eqref{formclos}. Then by the first representation theorem
\cite{Ka} the equality
$$(\wt S_{1{\rm F}}u,v)=\wt S_1[u,v]$$
is valid for all $v\in\cD[\wt S_1]$. Since $\wt S_{1{\rm
F}}\supseteq\wt S_1\supset S$, we get $\wt S_{1{\rm F}}\subset S^*$
and $\dom \wt S_{1{\rm F}}\subset\cD[\wt S_1].$ Thus,
$$(S^*u,v)=\wt S_1[u,v]=\wt \cA\left[\begin{pmatrix}u\cr\wt \Gamma_0 u\end{pmatrix},
\begin{pmatrix}v\cr\wt \Gamma_0 v\end{pmatrix}\right]$$
for all $v\in\cD[\wt S_1]$. On the other hand

$$(S^*u,v)=\left(\begin{pmatrix}S^*u\cr 0\end{pmatrix},\begin{pmatrix}v\cr \wt\Gamma_0 v\end{pmatrix}
\right)_{\wt\sH}.$$ Making use the first representation theorem
again, we get
\[
u\in\dom \wt S_{1{\rm F}}\Rightarrow\begin{pmatrix}u\cr\wt\Gamma_0
u\end{pmatrix}\in\dom\wt\cA,\;\mbox{and}\quad\wt\cA\begin{pmatrix}u\cr\wt\Gamma_0
u\end{pmatrix}=\begin{pmatrix}S^*u\cr 0 \end{pmatrix}.
\]
Now definition \eqref{A} of the main transform $\wt \cA$ yields the
equality $\Gamma_1 u=0$. This means that $u\in\ker\Gamma_1=\dom \wt
S_1$. Therefore, $\wt S_{1{\rm F}}=\wt S_1$ and thus $\wt S_1$ is
selfadjoint.

It is clear from \eqref{A} and \eqref{DomForm} that the equality
$(\wt S_0u,v)=\wt S_1[u,v]$ holds for all $u,v\in\dom \wt S_0$.
Consequently, $\wt S_0$ is nonnegative and the closed form
corresponding to $\wt S_0$ is a restriction of the closed form
$\wt{S}_1[\cdot,\cdot]$. Therefore, the pair $\{\wt S_0,\wt S_1\}$
satisfies all the conditions in \textbf{(a), (b), (c)}.
\end{proof}


\begin{definition}
\label{weyl} Let $\{\cH,\Gamma_0\}$ be a boundary pair for $\{\wt
S_0,\wt S_1\}$, let $\{\cH,\Gamma_0, \Gamma_1\}$ be the
corresponding boundary triplet, and let $\Gamma_0(z)$ be as in
\eqref{gz}. The operator valued function
\[
M(z):=\Gamma_1\Gamma_0(z),\;z\in\Ext [0,\infty),
\]
is called the Weyl function.
\end{definition}
An application of \eqref{GG} and \eqref{WW} shows that
\[
\left(W(z,\xi)h,e\right)_\cH=z\left(\Gamma_0(z)h,\Gamma_0(\xi)e\right)-
\left(M(z)h,e\right)_\cH,\quad h,e\in\cH.
\]
Hence,
\begin{equation}
\label{QWG} -M(z)=W(z,\xi)-z\Gamma_0^*(\xi)\,\Gamma_0(z).
\end{equation}
Since $W^*(z,\xi)=W(\xi,z)$, this implies that
\[
-M^*(\xi)=W(z,\xi)-\overline\xi\Gamma_0^*(\xi)\,\Gamma_0(z).
\]
Therefore,
\begin{equation}
\label{Nevan}
\frac{M(z)-M^*(\xi)}{z-\overline\xi}=\Gamma_0^*(\xi)\,\Gamma_0(z),
\end{equation}
and
\[
W(z,\xi)=\frac{\overline\xi M(z)-zM^*(\xi)}{z-\overline\xi}.
\]
Next another expression for $M(z)$ is derived by means of the Cayley
transforms $\wt B_k=(I-\wt S_{\rm{K}})(I+\wt S_{\rm{K}})^{-1}$,
$k=0,1$.

Since $\ran\Gamma_0(-1)=\cD[\wt S_1]\ominus_{\wt S_1}\cD[\wt S_0]$,
Proposition \ref{new0} shows that
$$\ran\Gamma_0(-1)=\ran(\wt B_1-\wt B_0)^{1/2}.$$
Therefore, there exists a continuous linear isomorphism $X_0$ from
$\cH$ onto the subspace $\sN_{-1}=\sN$ such that
\[
\Gamma_0(-1)= (\wt B_1-\wt B_0)^{1/2}X_0.
\]

\begin{theorem}
\label{T1} The Weyl function $M(z)$ of the boundary triplet
$\{\cH,\Gamma_0,\Gamma_1\}$ takes the form
\[
\begin{array}{l}
M(z)=-2X^*_0\left(I+(\wt B_1-\wt B_0)^{1/2}\left(\wt B_0-\frac{1-z}{1+z}\, I\right)^{-1}(\wt B_1-\wt B_0)^{1/2}\right)X_0,\\
\qquad\qquad=-2X^*_0\wt  Q_0\left(\cfrac{1-z}{1+z}\right)X_0, \;
z\in \Ext [0,\infty),
\end{array}
\]
where $\wt Q_0$ is defined by \eqref{q1}. If, in particular, the
pair $\{\wt S_0,\wt S_1\}$ satisfies the properties \eqref{spnon},
then $M(z)$ belongs to the class $\sS_{\rm{F}}(\cH)$.
\end{theorem}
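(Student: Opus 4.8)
The plan is to identify $M(z)=\Gamma_1\Gamma_0(z)$ with a rescaling of the function $\wt\cQ_0$ of Section~6 by means of the Nevanlinna kernel relation \eqref{Nevan}, and then to fix the resulting additive constant by evaluating at $z=-1$. First I would note that, by \eqref{gzxi} with $\xi=-1$ together with the identity $\Gamma_0(-1)=(\wt B_1-\wt B_0)^{1/2}X_0$, one has $\Gamma_0(z)=\gamma_0(z)X_0$, where $\gamma_0(z)=\bigl(I+(z+1)(\wt S_0-zI)^{-1}\bigr)(\wt B_1-\wt B_0)^{1/2}\uphar\sN$ is the $\gamma$-field of Section~6, so that (cf. \eqref{qf00}) $\wt\cQ_0(z)=-I_\sN+\frac{z+1}{2}(\wt B_1-\wt B_0)^{1/2}\gamma_0(z)$. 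A routine computation with the resolvent identity for $(\wt S_0-zI)^{-1}$ (equivalently, the substitution $\mu=\frac{1-z}{1+z}$ through \eqref{preobras} and \eqref{cq01}) gives the kernel identity $(z-\overline\xi)\gamma_0^*(\xi)\gamma_0(z)=2\bigl(\wt\cQ_0(z)-\wt\cQ_0^*(\xi)\bigr)$. Inserting $\Gamma_0(z)=\gamma_0(z)X_0$ into \eqref{Nevan} then yields $M(z)-M^*(\xi)=2X_0^*\bigl(\wt\cQ_0(z)-\wt\cQ_0^*(\xi)\bigr)X_0$, so that $C:=M(z)-2X_0^*\wt\cQ_0(z)X_0$ is a self-adjoint operator in $\bL(\cH)$ not depending on $z$.

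To determine $C$ I would put $z=-1$. On one side $\wt\cQ_0(-1)=-I_\sN$ directly from \eqref{qf00}. On the other side, taking $\psi=\Gamma_0(-1)h$ and $g=\Gamma_0(-1)e$ in \eqref{Gam10} and using $\Gamma_0\Gamma_0(-1)=I_\cH$ gives $-(M(-1)h,e)_\cH=(\Gamma_0(-1)h,\Gamma_0(-1)e)_{\wt S_1}$; since $\Gamma_0(-1)h=(\wt B_1-\wt B_0)^{1/2}X_0h$ with $X_0h\in\sN=\cran(\wt B_1-\wt B_0)^{1/2}$, the identity \eqref{norm} together with $||Pg||=||g||$ for $g\in\sN$ (as used in the proof of Proposition~\ref{zamkn}) gives $(\Gamma_0(-1)h,\Gamma_0(-1)e)_{\wt S_1}=2(X_0h,X_0e)_\sH$, whence $M(-1)=-2X_0^*X_0$. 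Therefore $C=M(-1)+2X_0^*X_0=0$, i.e. $M(z)=2X_0^*\wt\cQ_0(z)X_0$; rewriting $\wt\cQ_0(z)=-\wt Q_0\bigl(\frac{1-z}{1+z}\bigr)$ by \eqref{cq01} and expanding $\wt Q_0$ by \eqref{q1} produces the two displayed formulas for $M(z)$.

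For the last assertion, suppose $\{\wt S_0,\wt S_1\}$ satisfies \eqref{spnon}. By Theorem~\ref{new} this is equivalent to \eqref{nonnegpair}, hence by Theorem~\ref{Tahs} the Cayley transforms $\{\wt B_0,\wt B_1\}$ satisfy \eqref{MAIN}, and so $\wt\cQ_0\in\sS_{\rm F}(\sN)$ by Theorem~\ref{adth}. Writing $M(z)=Y^*\wt\cQ_0(z)Y$ with $Y:=\sqrt2\,X_0$ a linear homeomorphism of $\cH$ onto $\sN$, all the requirements of the class $\sS_{\rm F}(\cH)$ — holomorphy on $\dC\setminus\dR_+$, the Herglotz--Nevanlinna property, and properties (1)--(4) of Definition~\ref{klass0} — pass from $\wt\cQ_0$ to $M$, because $Y$ and $Y^{-1}$ are bounded and $Yg\neq0$ whenever $g\neq0$. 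Thus $M\in\sS_{\rm F}(\cH)$.

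I expect the middle portion to be the main obstacle: keeping the normalizations of $\Gamma_0(z)$, $\gamma_0(z)$ and $\wt\cQ_0(z)$ consistent so that the factor $2$ and the sign in the kernel identity come out correctly, and then evaluating the constant, which rests on the form identity $||(\wt B_1-\wt B_0)^{1/2}g||_{\wt S_1}^2=2||g||^2$ for $g\in\sN$ (i.e. on $P$ reducing to the identity on $\sN$). An alternative to the Nevanlinna step is to compute $M(z)=\Gamma_1\Gamma_0(z)$ directly from \eqref{Gam10}, after decomposing $\Gamma_0(z)h$ along $\cL=\dom\wt S_1\dot+\wt\sN_{-1}$, whose $\wt\sN_{-1}$-component works out to $-(\wt B_1-\wt B_0)^{1/2}\wt\cQ_0(z)X_0h$; this route reaches the same formula but requires as an extra ingredient the decomposition of elements of $\dom\wt S_0$ along $\dom\wt S_1\dot+\wt\sN_{-1}$.
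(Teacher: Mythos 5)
Your proposal is correct and takes essentially the same route as the paper: the paper likewise anchors the Nevanlinna relation \eqref{Nevan} at $\xi=-1$, obtains $M(-1)=-2X_0^*X_0$ from $\|\Gamma_0(-1)h\|^2_{\wt S_1}=2\|X_0h\|^2$ (via \eqref{QWG} and \eqref{WW}, which at $z=\xi=-1$ is the same identity you extract from \eqref{Gam10}), and then passes to $\wt Q_0$ through \eqref{gzxi} and \eqref{preobras}, your "constant $C$" step being just a repackaging of the paper's formula $M(z)=M(-1)+(z+1)\Gamma_0^*(-1)\Gamma_0(z)$. The final class-membership argument via Theorems~\ref{new}, \ref{Tahs}, \ref{adth} and the isomorphism $X_0$ also coincides with the paper's.
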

\begin{proof}

From \eqref{WW} and \eqref{QWG} one obtains for all $h\in\cH$,
\[
-\left(M(-1)h,h\right)_\cH=\left(W(-1,-1)h,h\right)_\cH+||\Gamma_0(-1)h||^2=||\Gamma_0(-1)h||^2_{\wt
S_1}.
\]
Now the definition of $X_0$ and \eqref{norm} imply
\[
 -\left(M(-1)h,h\right)_\cH = ||(\wt B_1-\wt B_0)^{1/2}X_0h||^2_{\wt
S_1}=2||X_0h||^2,
\]
which leads to
\[
M(-1)=-2 X^*_0X_0.
\]
According to \eqref{Nevan} one has
$M(z)=M(-1)+(z+1)\Gamma_0^*(-1)\Gamma_0(z)$ and using \eqref{gzxi}
and \eqref{preobras} one obtains
\[
\begin{array}{rl}
 M(z)&=M(-1)+(z+1)\Gamma_0^*(-1)(I+\wt S_0)(\wt S_0-z)^{-1}\Gamma_0(-1) \\
 &=-2X^*_0X_0-2X^*_0(\wt B_1-\wt B_0)^{1/2}\left(\wt B_0-\frac{1-z}{1+z}\, I\right)^{-1}(\wt B_1-\wt B_0)^{1/2}X_0 \\
 &=-2X^*_0\left(I+(\wt B_1-\wt B_0)^{1/2}\left(\wt B_0-\frac{1-z}{1+z}\, I\right)^{-1} (\wt B_1-\wt
 B_0)^{1/2}\right)X_0 \\
 &=-2X^*_0\wt Q_0\left(\cfrac{1-z}{1+z}\right)X_0.
\end{array}
\]
Finally, since $X_0$ is a linear isomorphism (homeomorphism) it
follows from Theorem~\ref{adth} that the function $M(z)$ together
with the function $\wt \cQ_0(z)=-2\wt
Q_0\left(\cfrac{1-z}{1+z}\right)$ belongs to the class
$\sS_{\rm{F}}(\cH)$ of Herglotz- Nevanlinna functions.
\end{proof}


By means of \eqref{PROP} and \eqref{inv11} it is seen that
$M^{-1}(z)\in \bL(\cH)$ for all $z\in \Ext [0,\infty)$ and
\[
M^{-1}(z)=-\frac{1}{2}X^{-1}_0\wt
Q_1\left(\frac{1-z}{1+z}\right)X^{*-1}_0,
\]
where $\wt Q_1$ is defined in \eqref{q2}.

In conclusion we mention one more general relation for the Weyl
function $M(z).$ Let $\wt \cA$ be defined by \eqref{A}. Then
\[
P_\cH(\wt\cA-zI)^{-1}\uphar\cH=-(M(z)+zI)^{-1}, \;
z\in\dC\setminus\dR_+.
\]
Indeed, since
\[
\left(\wt\cA-zI\right)\begin{pmatrix}u\cr\Gamma_0u\end{pmatrix}=
\begin{pmatrix}S^*u-zu\cr-\Gamma_1u- z\Gamma_0u\end{pmatrix},
\]
the equality
\[
\left(\wt\cA-zI\right)\begin{pmatrix}u\cr\Gamma_0u\end{pmatrix}=
\begin{pmatrix}0\cr h\end{pmatrix}
\]
holds if $u\in\sN_z\cap\dom\wt\cA=\wt\sN_z$. Hence $u=\Gamma_0(z)e$
for certain $e\in\cH$. Then
$$-\Gamma_1u-z\Gamma_0u=-(M(z)+zI)e=h.$$
Hence $P_\cH(\wt\cA-zI)^{-1}h=e=-(M(z)+zI)^{-1}h.$

\bigskip

\noindent \textbf{Acknowledgements.} The first author thanks the
Department of Mathematics and Statistics of the University of Vaasa
for the hospitality during his visit. The second author is grateful
for the support from the Emil Aaltonen Foundation.

\end{document}